\numberwithin{equation}{section}
\newcommand{\eps}{\epsilon}
\newcommand{\bbC}{\mathbb{C}}
\newcommand{\bbD}{\mathbb{D}}
\newcommand{\bbN}{\mathbb{N}}
\newcommand{\bbR}{\mathbb{R}}
\newcommand{\bbP}{\mathbb{P}}
\newcommand{\calD}{\mathcal{D}}
\newcommand{\calE}{\mathcal{E}}
\newcommand{\calF}{\mathcal{F}}
\newcommand{\calJ}{\mathcal{J}}
\newcommand{\del}{\partial}
\newcommand{\tr}{\mathrm{tr}}
\newcommand{\End}{\mathrm{End}}
\newcommand{\Hom}{\mathrm{Hom}}
\newcommand{\diag}{\mathrm{diag}}
\newcommand{\laa}{\mathfrak{a}}
\newcommand{\lag}{\mathfrak{g}}
\newcommand{\lah}{\mathfrak{h}}
\newcommand{\lak}{\mathfrak{k}}
\newcommand{\lam}{\mathfrak{m}}
\newcommand{\lap}{\mathfrak{p}}
\newcommand{\lat}{\mathfrak{t}}
\newcommand{\lau}{\mathfrak{u}}
\newcommand{\lasl}{\mathfrak{sl}}
\newcommand{\GL}{\mathrm{GL}}
\newcommand{\SL}{\mathrm{SL}}
\newcommand{\SU}{\mathrm{SU}}
\newcommand{\U}{\mathrm{U}}
\newcommand{\sph}{\mathrm{sph}}
\newcommand{\ind}{\mathrm{ind}}
\newcommand{\reg}{\mathrm{reg}}
\newcommand{\Lie}{\mathrm{Lie}}
\newcommand{\pol}{\mathrm{pol}}
\newcommand{\rad}{\mathrm{rad}}
\newcommand{\ls}{\lambda_\mathrm{sph}}
\renewenvironment{proof}{\noindent{\scshape Proof.}}{\qed}
\theoremstyle{plain}
\newtheorem{theorem}{Theorem}[section]
\newtheorem{lemma}[theorem]{Lemma}
\newtheorem{proposition}[theorem]{Proposition}
\newtheorem{corollary}[theorem]{Corollary}
\newtheorem{definition}[theorem]{Definition}
\newtheorem{condition}[theorem]{Condition}
\theoremstyle{definition}
\newtheorem{remark}[theorem]{Remark}
\title[Matrix elements and multivariable matrix-valued orthogonal polynomials]
{Matrix elements of irreducible representations of $\SU(n+1)\times\SU(n+1)$ and multivariable matrix-valued orthogonal polynomials}
\author{Erik Koelink}
\address{IMAPP, Radboud Universiteit, 
PO Box 9010, 6500 GL Nijmegen, 
the Netherlands}
\email{e.koelink@math.ru.nl}
\author{Maarten van Pruijssen}
\address{Institut f\"ur Mathematik,
Universit\"at Paderborn,
Warburger Str. 100,
D-33098 Paderborn, Germany}
\email{vanpruijssen@math.uni-paderborn.de} 
\author{Pablo Rom{\'a}n}
\address{CIEM, FaMAF, Universidad Nacional de C\'ordoba, Medina Allende s/n Ciudad
Universitaria, C\'ordoba, Argentina}
\email{roman@famaf.unc.edu.ar}
\date{\today}
\begin{document}

\begin{abstract}
In Part 1 we study the spherical functions on compact symmetric pairs of arbitrary rank under a suitable multiplicity freeness assumption 
and additional conditions on the branching rules. The spherical functions are taking values in the spaces of linear operators of 
a finite dimensional representation of the subgroup, so the spherical functions are matrix-valued. 
Under these assumptions these functions can be described in terms of matrix-valued orthogonal polynomials in several variables,
where the number of variables is the rank of the compact symmetric pair.
Moreover, these polynomials are uniquely determined as simultaneous eigenfunctions of a commutative algebra of differential operators.

In Part 2 we verify that the group case $\SU(n+1)$ meets all the conditions that we impose in Part 1. 
For any $k\in\bbN_{0}$ we obtain families of orthogonal polynomials in $n$ variables with values in the $N\times N$-matrices, where $N=\binom{n+k}{k}$.
The case $k=0$ leads to the classical Heckman-Opdam polynomials of type $A_{n}$ with geometric parameter.
For $k=1$ we obtain the most complete results.
In this case we give an explicit expression of the matrix weight, which we show to be irreducible whenever $n\ge2$.
We also give explicit expressions of the spherical functions that determine the matrix weight for $k=1$.
These expressions are used to calculate the spherical functions that determine the matrix weight for general $k$ up to invertible upper-triangular matrices.
This generalizes and gives a new proof of a formula originally obtained by Koornwinder for the case $n=1$.
The commuting family of differential operators that have the matrix-valued polynomials as simultaneous eigenfunctions contains an element of order one.
We give explicit formulas for differential operators of order one and two for $(n,k)$ equal to $(2,1)$ and $(3,1)$.
\end{abstract}

\maketitle

\tableofcontents



\section{Introduction}

\subsection{Motivation and history}
There is an intimate relationship between special functions and group theory. 
It consists of a very fruitful cross-fertilization which has been 
exploited in several directions. 
Typically, matrix coefficients of compact or complex groups are related to polynomials in various forms. 
In this paper, we explore this relationship further and we discuss multivariable matrix-valued 
orthogonal polynomials related to the representation theory of compact groups.
The relation is established by exploiting properties of matrix-valued spherical functions. 
Matrix-valued spherical functions extend the notion of zonal spherical functions on symmetric spaces.
They have been studied extensively by Harish-Chandra, see e.g.~\cite{CM1982, Warner2} for an account, and subsequently by several  other authors to understand the harmonic analysis on real reductive groups, see e.g.~\cite{Camporesi1997, GangVara, Gode, Pedon, tirao, Warner2}.
The successful relation between harmonic analysis on compact symmetric spaces and orthogonal polynomials via the 
study of spherical functions, of which the spherical harmonics on the sphere are a prototype, has been 
described and studied in e.g.~\cite{vanDijk,Heckman,HelgasonGGAnew,VileK}. 

Matrix-valued orthogonal polynomials of a single variable have been introduced in the 1940s by M.G.~Krein 
in the study of operators with higher order deficiency indices. 
Krein also studied the corresponding moment problem in the 
context of spectral theory. The study of matrix-valued orthogonal polynomials 
has several applications, see the overview paper \cite{DamaPS}
for an introduction and references up to 2008. 
One of the developments in the study of matrix-valued orthogonal polynomials
is extending classical results for scalar-valued orthogonal polynomials to the setting of 
matrix-valued orthogonal polynomials.
This includes the study of the matrix-valued differential 
operators having these matrix-valued orthogonal polynomials as eigenfunctions,
which in general leads to a non-commutative algebra of differential operators. 
The construction of interesting examples of matrix-valued orthogonal polynomials that are simultaneous eigenfunctions of matrix-valued differential operators
had been lagging behind until 2002.

The first paper establishing explicit classes of matrix-valued orthogonal polynomials using 
matrix-valued spherical functions and differential operators was the paper \cite{GPT} by Gr\"unbaum, Pacharoni, Tirao. 
In this paper matrix-valued spherical functions for the 
compact symmetric pair $(\SU(3),\U(2))$ were
considered. The approach relies on the reduction of such a 
matrix-valued spherical function to a matrix-valued function on 
the corresponding symmetric space $\mathbb{P}^2(\bbC)= \SU(3)/\U(2)$ 
and heavy usage of matrix-valued differential operators which are known explicitly for this case. 
The approach of \cite{GPT} turns out to be too complicated in general to generalize to pairs of compact groups where there is less control over the differential operators.

Motivated by Koornwinder's paper \cite{Koor1} on vector-valued 
orthogonal polynomials, we have developed an approach for matrix-valued orthogonal polynomials
for the compact symmetric space $(G,K)=(\SU(2)\times \SU(2),\diag\,\SU(2))$ in which all the main properties are explicit \cite{KvPR,KvPR2}.
These main properties include the 
orthogonality relations, in particular two explicit descriptions of the matrix-valued weight, 
the three-term recurrence relation, explicit description of the reducibility, 
two explicit commuting matrix-valued differential operators having the matrix-valued orthogonal polynomials
as eigenfunctions, the explicit relationship 
to Tirao's matrix-valued hypergeometric functions, etc. All results in the papers \cite{KvPR, KvPR2} 
are obtained for arbitrary dimensions of the matrix algebras.

The study of this example has led to a general theory for the matrix-valued orthogonal
polynomials in relation to Gelfand pairs of rank one, see 
\cite{HvP,MVPthesis,vPR}.
To set up the general theory we have to impose multiplicity-free restriction in the branching rules for certain representations of the groups that are involved. 
Then the group theoretic interpretation gives a commutative class of matrix-valued differential operators to which 
these matrix-valued orthogonal polynomials are eigenfunctions. 
These differential operators arise naturally from a suitable subalgebra of the universal enveloping algebra,
which includes the Casimir element \cite{Dixmier}. To obtain them we have to perform radial part calculations, see \cite{CM1982}, 
and conjugations with suitable matrix-valued functions. 
The general set-up from \cite{HvP,MVPthesis} also applies to the examples calculated in \cite{GPT,TZ} where matrix-valued orthogonal polynomials are obtained from studying the differential equations.

\subsection{Results}
One of the main results in \cite{mvp-MVMVOP} is the existence of families of multivariable matrix-valued orthogonal polynomials that are simultaneous 
eigenfunctions of a commutative algebra of differential operators. The existence is based on examples and an \textit{ad hoc} analysis of the involved spectra. 
In this paper we present a solid theory for the general construction of the polynomials and the differential operators based on three isolated conditions. 
These conditions are satisfied by the pairs $(\SU(n+1)\times\SU(n+1),\diag\,\SU(n+1))$ and the irreducible representations of $\SU(n+1)$ on $S^{k}(\bbC^{n+1})$, 
the $k$-th symmetric power of the standard representation. For this class of examples we are able to provide many explicit expressions. 
In particular for $k=1$ we give an explicit formula of the weight-matrix and prove its irreducibility. We also provide explicit expressions of commuting differential operators in low dimensions. We proceed with a detailed discussion of our results.

In  Part \ref{part:generalities} of the paper 
we set up a general theory on the relationship between 
multivariable matrix-valued orthogonal polynomials and the representation theory of a compact symmetric space $U/K$. 
First we study matrix-valued spherical functions in some detail.
Fixing a $K$-representation $\pi_\mu^K$ of highest weight $\mu$ in the space 
$V_\mu^K$, we study the space $E^{\mu}$ of matrix-valued functions $\Phi^\mu$ on $U$ taking values in $\End(V_\mu^K)$ so that 
\[
\Phi^\mu(k_1gk_2) = \pi_\mu^K(k_1) \Phi^\mu(g) \pi_\mu^K(k_2), \qquad 
\forall\, k_1,k_2\in K, \ \forall\, g\in G. 
\]
We look for $U$-representations of highest weight $\lambda$ so that we 
can associate a non-trivial matrix-valued spherical function $\Phi^{\mu}_{\lambda}$, see \eqref{def: spherical function}, to this 
representation. These are the irreducible representations of $U$ whose restriction to $K$ contains $\pi^{K}_{\mu}$.
The highest weights of these representations are collected in the set $P^{+}_{U}(\mu)$.
The first condition that we impose is multiplicity freeness: we fix an irreducible representation $\pi^{K}_{\mu}$ such that $[\pi^{U}_{\lambda}|_{K}:\pi^{K}_{\mu}]=1$ for all $\lambda\in P^{+}_{U}(\mu)$.

For example, take $\pi_\mu^K$ the trivial representation, i.e.~$\mu=0$. 
The first condition is satisfied by the Cartan-Helgason Theorem \cite[Thm.~8.49]{KnappLGBI}. 
By the same theorem, the set $P^{+}_{U}(0)$ is a semi-group generated by $n$ elements $\lambda_{1},\ldots,\lambda_{n}$, where $n$ is the rank of the symmetric space.
The space $E^{0}$ of $K$-biinvariant functions is generated by 
fundamental zonal spherical functions $\phi_{1},\ldots,\phi_{n}$, i.e.~the spherical functions of type $\pi^{K}_{0}$ related to the fundamental spherical weights $\lambda_{1},\ldots,\lambda_{n}$. For the general case we impose the following condition on $P^{+}_{U}(\mu)$, namely that it is of the form
$$P^{+}_{U}(\mu)=B(\mu)+P^{+}_{U}(0),$$
where $B(\mu)$ is a finite subset of dominant integral weights. This condition is satisfied for $\mu=0$ by taking $B(0)=\{0\}$.

The set $B(\mu)=\{\nu_{1},\ldots,\nu_{N}\}$ provides $N$ ``minimal spherical functions $\Phi^{\mu}_{\nu_{i}}$ of type $\pi^{K}_{\mu}$''. 
Our third condition, which is of a technical nature, ensures that we can write an element $\Phi^{\mu}\in E^{\mu}$ as an 
$E^{0}$-linear combination of the minimal spherical functions of type $\pi^{K}_{\mu}$, i.e.~there exist polynomials $q(\Phi^{\mu},i)\in\bbC[\phi_{1},\ldots,\phi_{n}]$ such that
$$\Phi^{\mu}=\sum_{i=1}^{N}q(\Phi^{\mu},i)(\phi_{1},\ldots,\phi_{n})\Phi^{\mu}_{\nu_{i}}.$$
This construction then allows us to define the multivariable matrix-valued orthogonal polynomials by collecting the polynomials in the fundamental zonal spherical functions in a systematic way.

The matrix-valued orthogonality measure can be given explicitly.
The orthogonality measure involves a matrix part which 
involves the matrix-valued spherical functions associated to the set $B(\mu)$. 
We take this information together in a matrix-valued function $\Psi^{\mu}_0$, and then
the matrix part of the orthogonality measure is given by $(\Psi^{\mu}_0)^\ast T^{\mu}\Psi^{\mu}_0$, where $T^{\mu}$ is a diagonal matrix whose entries depend on the elements in $B(\mu)$. 
In particular, the size $N$ of the algebra of $N\times N$-matrices in which these 
polynomials take their values equals $\#B(\mu)$.
The orthogonality measure also involves a scalar part and this part 
requires the knowledge of the decomposition of the Haar measure with respect to 
the $KAK$-decomposition.

In order to obtain the matrix-valued differential operators for the 
multivariable matrix-valued orthogonal polynomials we need to
perform radial part calculations to find the matrix-valued 
differential operators for the matrix-valued spherical functions,
following \cite{CM1982}. Next we need to conjugate 
these operators with the matrix-valued function $\Psi_0^{\mu}$ 
to come to a result for the matrix-valued polynomials, and this 
requires matrix-valued differential equations for $\Psi_0^{\mu}$ of order lower than the order of the initial differential operator. 
Finally, we need to switch to coordinates in terms of the fundamental zonal 
spherical functions and finally to real coordinates. 

In the second part of the paper, we make this program explicit for the case 
of the symmetric space $(U,K)=(\SU(n+1)\times \SU(n+1),\SU(n+1))$,
where $\SU(n+1)$ is diagonally embedded as the fixed point set of the 
flip.
Part \ref{part:specialcase} extends the case 
$n=1$ studied previously in \cite{KvPR,KvPR2}. 
We show that the conditions on inverting the 
branching rules is satisfied in case we take the $\SU(n+1)$-representations $S^k(\bbC^{n+1})$ of highest weight $\mu=k\omega_1$. 
The branching rules are described using the theory of spherical varieties in Section \ref{sec:invertingbranchingrule}
and we show that in these cases all conditions of the general part are satisfied.
The zonal spherical functions generating $K$-biinvariant functions are the characters.
The explicit orthogonality relations involve the Dyson integral --a special case of the 
Selberg integral-- as well as the determination of some explicit constants.
We show that the matrix-valued weight is irreducible for $n\geq 2$ and $k=1$.
It is known that this is not the case for $n=1$, see \cite{KvPR, KvPR2}.
The orthogonality measure is described in terms of the matrix-valued spherical functions 
corresponding to the representations labeled by the weights in the set $B(\mu)=B(k\omega_1)$, which 
we collect in a matrix-valued function $\Phi_0$.
The most elementary case $k=1$ of $\Phi_0$ gives a $(n+1)\times (n+1)$-matrix 
which can be viewed as a kind of group element $g_a$, parametrized by $a\in A_{c}$, where $A_{c}$ is the 
compact torus of the $U=KA_{c}K$-decomposition.
We show that for the more general cases, i.e. for  $k>1$, the corresponding matrix-valued function $\Phi_0$ can be obtained in terms of a suitable representation evaluated at $g_a$ up to constant matrices. 
This result is inspired by the remarkable observation of Koornwinder for the case $n=1$ in \cite[Prop.~3.2]{Koor1}. 
The proof that we present is of a different nature, hence we obtain a new proof of Koornwinder's result.
The generalization of Koornwinder's result implies that the case $k=1$ is fundamental to understand $\Phi_0$ for 
arbitrary $k\in \bbN_{0}$, which in turn is essential to find the matrix part of the weight. 
The scalar part of the orthogonality measure is supported on the interior of a compact set in $\bbR^n$ after a change of coordinates.
For $n=1$ it is supported on the interval $[-1,1]$, for $n=2$ on the interior of Steiner's hypocycloid and for $n=3$ on a 
$3$-dimensional analog of Steiner's hypocycloid, see Figure \ref{fig:2d}. 

Using Dixmier \cite{Dixmier}, we find a commutative subquotient $\bbD(\mu)$ of the universal enveloping algebra whose elements act as differential operators 
having the matrix-valued spherical functions as eigenfunctions, see also \cite{Deitmar90}. 
In particular, this symmetric space comes naturally with two Casimir operators, one from the first factor
of $\SU(n+1)\times\SU(n+1)$ and one form the second. Using radial part calculations \cite{CM1982}, this leads to two second order matrix-valued 
differential operators for the associated multivariable matrix-valued orthogonal 
polynomials after conjugation with $\Phi_0$ and 
a change of coordinates. The difference of these two operators leads to a first(!)-order  matrix-valued differential
operator having the matrix-valued orthogonal polynomials as simultaneous eigenfunctions. 
This is remarkable, since for scalar-valued orthogonal polynomials this is not possible by Bochner's Theorem.
For single-variable matrix-valued or multivariable scalar-valued
orthogonal polynomials there is no known example of this phenomenon, see for instance the discussion in
\cite[p.155]{DdlI} and references therein. 
We present some of these operators in explicit low-dimensional cases, for $n=2,3$ and $k=1$. 
The explicit case in Part \ref{part:specialcase} in the scalar case for $n=2$ reduces to 
the $2$-variable orthogonal polynomials on (the interior of) Steiner's hypocycloid, see Figure \ref{fig:2d}, introduced 
by Koornwinder \cite{KoorIM} in the 1970s. So for $n=2$ we have constructed $2$-variable matrix-valued 
analogues of Koornwinder's orthogonal polynomials on Steiner's hypocycloid. The dimension $N$ of 
the $N\times N$-matrix-valued orthogonal polynomials is $\#B(k\omega_1)=\dim_\bbC (\text{End}_M (S^k(\bbC^3)))=\dim_{\bbC}(S^{k}(\bbC^{3}))$, which is 
$N=\frac12(k+2)(k+1)$. Here $M=Z_K(A_{c})$, which is a maximal torus in $K$. 

The results are written in terms of polynomials in the zonal spherical functions where the degree is a multi-index. 
The Heckman-Opdam polynomials of type $A$ (for the geometric parameter) are written as symmetric functions in the 
coordinates on the abelian subgroup $A_c$ and indexed by partitions. In the scalar case, the correspondence is 
given by the coordinate transformation which rewrites a symmetric polynomial as a polynomial in the elementary symmetric functions.
The reason to write it in this way is that the general construction in Part \ref{part:generalities} gives the results 
naturally in terms of zonal spherical functions times matrix-valued spherical functions corresponding to 
minimal representations of $B(\mu)$. We obtain symmetric functions only at a later stage, e.g. after writing
down the orthogonality relations explicitly. 

\subsection{Outlook}
It is well-known that Koornwinder's original 1970s papers have been very influential in the 
development of the multivariable Heckman-Opdam polynomials and functions, which in turn play an 
important role in integrability of systems such as the Calogero-Moser-Sutherland models, see \cite{Heckman}.
A natural question is whether or not there is an extension of Cherednik's approach 
or an application of Dunkl operators available for these multivariable matrix-valued polynomials,
see \cite{DunkX, Macd2, OpdamLectureNotesDunkl}.  
The possible application to integrable systems of the class of polynomials as in this paper remains to be
investigated. 
Also, it might be possible to extend some of the results of this paper to more general
parameters, which has been done for $n=1$ of Part \ref{part:specialcase} in \cite{KdlRR}.
Similarly, one may consider the extension to the quantum setting 
and to obtain quantum analogues of the polynomials of this paper, see 
\cite{AldeKR} for the 
quantum analogue of the case $n=1$ of Part \ref{part:specialcase}.

One can also consider the spherical functions of fixed $K$-type on non-compact symmetric spaces. In this case we expect multivariable matrix-valued special functions that are eigenfunctions to the same algebra of differential operators.
However, the set of parameters needs to be enlarged and requires further study, e.g.~because of the possible occurrence of discrete series representations. Certain properties of the eigenfunctions, such as asymptotic behavior, 
were already understood by Harish-Chandra, see e.g.~\cite{CM1982} for an account. 
Some references that consider these questions are \cite{Camporesi2000, RomanTirao}.

\subsection*{Acknowledgement}
This research was supported through the program `Research in Pairs' by the Mathematisches Forsungsinstitut 
Oberwolfach in 2017. The research of Pablo Rom\'an was partly done while being a Radboud Excellence Initiative Fellow
at Radboud Universiteit. 
Erik Koelink gratefully acknowledges the support of FaMAF as invited professor at Universidad Nacional de C\'ordoba 
for a research visit. 
Pablo Rom\'an was supported by FONCyT grant PICT 2014-3452 and by SECyT-UNC.


\part{Generalities on spherical functions in the multiplicity free setting}\label{part:generalities}


\section{Matrix-valued spherical functions}\label{sec:MVSFs}

We recall some of the results of \cite{mvp-MVMVOP} specified to symmetric spaces. The main idea, to view the space spanned by the spherical functions as a module over the ring of biinvariant functions, originates from \cite{HvP,MVPthesis} and is based on the classical results in \cite{Vretare1976}. 
Let $(G,H)$ be a complex symmetric pair of rank $n$, and let $(U,K)$ be the corresponding 
compact symmetric pair.  
We assume that $G$ is connected and semisimple and that $H$ is connected.  

We let $H\times H$ act on the regular functions $\bbC[G]$ by the biregular representation given by $(h_{1},h_{2})f(g)=f(h_{1}^{-1}gh_{2})$. 
Let $E^{0}=\bbC[G]^{H\times H}$ denote the algebra of $H$-biinvariant regular functions. 
Suppose we have chosen a Borel subgroup of $H$, and that we are given an irreducible representation $(\pi^{H}_{\mu},V^{H}_{\mu})$ of $H$, where $\mu$ is the highest weight according to the choice of the Borel subgroup. The (finite dimensional) vector space is also called an $H$-module of highest weight $\mu$ and we sometimes simply write $V$ or $V_{\mu}$ instead.

The corresponding representation of $K$ in $V$ is unitary for a fixed inner product, which
we assume is anti-linear in the first leg. By Weyl's unitary trick we identify the representations of $K$ and $H$ and
the representations of $U$ and $G$.

The group $H\times H$ acts 
naturally on $\bbC[G]\otimes\End(V)$ by the biregular representation in the 
first leg of the tensor product and by left multiplication by $\pi^{H}_{\mu}(h_1)$ 
and by right multiplication by $\pi^{H}_{\mu}(h_2^{-1})$ in the second leg.
The space of invariants $E^{\mu}=(\bbC[G]\otimes\End(V))^{H\times H}$ is the space
of $\End(V)$-valued holomorphic polynomials on $G$ satisfying 
\begin{equation}\label{eqn: trafo}
F(h_{1}gh_{2})=\pi^{H}_{\mu}(h_{1})F(g)\pi^{H}_{\mu}(h_{2}),\quad\mbox{for all $h_{1},h_{2}\in H$ and $g\in G$}.
\end{equation}
Note that the trivial representation $\mu=0$ gives back the space 
$E^{0}$ of $H$-biinvariant holomorphic polynomials.
Note that the space of invariants $E^{\mu}$ is a $E^{0}$-module by point-wise multiplication.  

To analyze $E^{\mu}$ we use explicit knowledge of the decomposition of the $G$-module $\ind^{G}_{H}\pi^{H}_{\mu}$. 
We collect the highest weights (after having fixed a Borel subgroup $B_{G}\subset G$ and a maximal torus $T_{G}\subset B_{G}$) of the irreducible $G$-subrepresentations of 
$\ind^{G}_{H}\pi^{H}_{\mu}$ in the set
\begin{eqnarray}\label{eq: mu-well}
P^{+}_{G}(\mu)=\{\lambda\in X^{+}(T_{G})\,|\, [\pi^{G}_{\lambda}|_{H}:\pi^{H}_{\mu}]\ge1\}.
\end{eqnarray}
In order to further analyze the space $E^{\mu}$ and to establish a connection with matrix-valued orthogonal polynomials we impose conditions on the data $(G,H,\mu)$. The first condition is
on the set $P^{+}_{G}(\mu)$.

\begin{condition}\label{cond:multfree} $(G,H,\mu)$ is a multiplicity free triple, i.e.~$\ind^{G}_{H}\pi^{H}_{\mu}$ decomposes multiplicity free. 
\end{condition}


There is an abundance of examples of multiplicity free triples, namely those coming from the multiplicity free systems, i.e. triples $(G,H,P)$ with $(G,H)$ as before and with $P\subset H$ a parabolic subgroup such that $G/P$ admits an open orbit of a Borel subgroup of $G$. 
Any positive character $\mu\in X^{+}(T_{H})$ that extends to a character $\mu\colon P\to\bbC^{\times}$ gives rise to a multiplicity free triple, see \cite{mvp-MVMVOP}.

A spherical function of type $\mu$, associated to $\lambda\in P^{+}_{G}(\mu)$ with 
$G$-representation $\pi^G_{\lambda}$ acting in $V^{G}_{\lambda}$ is defined as  
\begin{eqnarray}\label{def: spherical function}
\Phi^{\mu}_{\lambda}\colon G\to\End(V^{H}_{\mu}):g\mapsto p\circ\pi^{G}_{\lambda}(g)\circ j,
\end{eqnarray} 
where $j\colon V_{\mu}\to V^{G}_{\lambda}$ is an $H$-equivariant embedding, unitary for the $U$ and 
$K$-invariant inner products on the respective representation spaces $V^{H}_{\mu}$ and $V^{G}_{\lambda}$.
The map $p\colon V^{G}_{\lambda}\to V^{H}_{\mu}$ is the adjoint of $j$, so $p\circ j = I_{V^H_\mu}$. 
Assuming Condition \ref{cond:multfree} the spherical functions of type $\mu$ form a basis of $E^{\mu}$ 
using the algebraic version of the Peter-Weyl Theorem
\cite[Satz 5.2]{DMV-book Springer}. 

In the following subsections we recall some of the properties of the matrix-valued spherical functions and the space of invariants $E^{\mu}$.


\subsection{Orthogonality}\label{subsection:orthogonality}

Note that for the restrictions of $F_1,F_2\in E^{\mu}$ to the compact form $U$, 
the map $U\ni u \mapsto F_1(u)^\ast F_2(u)\in \End(V_{\mu}^H)$ is left $K$-invariant.
Here the adjoint is taken with respect to the inner product on $V^{H}_{\mu}$ for which the 
corresponding $K$-representation is unitary. Then the scalar map 
$U\ni u \mapsto \tr\bigl(F_1(u)^\ast F_2(u)\bigr)$ is $K$-biinvariant.

The space $E^{\mu}$ carries the following Hermitian structure;
\begin{equation}\label{eq:innerproductF1F2}
\langle F_{1},F_{2}\rangle_{\mu}=\int_{U}\tr\bigl( F_{1}(u)^{\ast}F_{2}(u)\bigr)\, du,\qquad F_{1},F_{2}\in E^{\mu},
\end{equation}
where  $du$ is the Haar measure on $U$ normalized by $\int_{U}du=1$. 
By Schur's orthogonality relations the spherical functions $\Phi^{\mu}_{\lambda}$ satisfy the orthogonality relations
\begin{equation}\label{eq:innerproductPhilambdaPhilambda}
\langle\Phi^{\mu}_{\lambda},\Phi^{\mu}_{\lambda'}\rangle_{\mu}=\frac{\dim(V^{H}_{\mu})^{2}}{\dim(V^{G}_{\lambda})}\delta_{\lambda,\lambda'},
\qquad\lambda,\lambda'\in P^{+}_{G}(\mu).
\end{equation}

The integral \eqref{eq:innerproductF1F2} can be reduced using that 
the symmetric pair $(U,K)$ admits a $KAK$-decomposition, see \cite[Ch.X, \S1, no.5]{HelgasonDGSSold}, which is the reference for this subsection.  
Let $\theta\colon G\to G$ be the involution such that $H$ is the connected component of the group of fixed points, $H=(G^{\theta})_{e}$. 
We assume $\theta$ is the complexification of an involution that we denote by the same symbol, 
$\theta\colon U\to U$, for which $K=(U^{\theta})_{e}$. 
Let $\lag$, $\lah$ denote the complex Lie algebras of the groups $G$, $H$, and 
let $\lau$, $\lak$ denote the real Lie algebras of the groups $U$, $K$. 
Let $\lau=\lak\oplus\lap_{c}$ denote the Cartan decomposition of $\lau$ into the $\pm$-eigenspaces of $\theta$. 
Let $\laa_{c}\subset\lap_{c}$ denote a maximal abelian subspace and let $A_{c}\subset K$ denote the connected torus with 
$\Lie(A_{c})=\laa_{c}$. Denote $M_{c}=Z_{K}(\laa_{c})$, 
$\lam_{c}=\Lie(M_{c})$ and let $\lat_{M_{c}}\subset\lam_{c}$ be a maximal torus. 
The complexifications of $M_{c}$, $A_{c}$, $\lam_{c}$, $\laa_{c}$, $\lat_{M_{c}}$ are denoted by $M$, $A$, $\lam$, $\laa$, $\lat_{M}$.

Let $\lag_{0}=\lak\oplus i\lap_{c}$ be the non-compact Cartan dual of $\lau$. 
The tori $\lat_{0}=\lat_{M_{c}}\oplus\laa_{0}\subset\lag_{0}$, with $\laa_{0}=i\laa_{c}$, and 
$\lat=\lat_{M}\oplus\laa\subset\lag$ are maximal. We denote the corresponding root systems by
$$
\Delta=\Delta(\lag,\lat)\quad\mbox{and } \Sigma=\Sigma(\lag_{0},\laa_{0}).
$$
The Weyl groups are denoted by $W(\Delta)$ and $W(\Sigma)$. We fix compatible orderings on the duals of 
$i\lat_{c}$, where $\lat_{c}=\lat_{M_{c}}+\laa_{c}$, 
and $\laa_{0}$ to obtain subsets of positive roots $\Delta^{+}\subset\Delta$ and $\Sigma^{+}\subset\Sigma$. 
Furthermore, denote $P_{+}=\{\alpha\in\Delta^{+} \mid \alpha\ne\alpha\circ\theta\}$ and $P_{-}=\{\alpha\in\Delta^{+} \mid \alpha=\alpha\circ\theta\}$.
The compact group $U$ admits the decomposition $U=KA_{c}K$. 
Note that the dimension of $A_{c}$ is equal to the rank $n$ of the symmetric space. 
The integral over $U$ can be rewritten as
\begin{eqnarray}\label{eq: integral decomp KAK}
\int_{U} f(u)\, du= c_{1}\int_{K}\int_{K}\int_{A_{c}} f(k_1ak_2) \,|\delta(a)|\, da \, dk_{1}\, dk_{2},
\end{eqnarray}
where $\delta(\exp(H))=\prod_{\alpha\in P_{+}}(e^{\alpha(H)}-e^{-\alpha(H)})$.
Recall that $\alpha$ takes purely imaginary values on $\lat_{c}$, so that $\delta$ is the product of sine functions and a constant.
Here, $da$ and $dk$ are the Haar measures on $A_{c}$ and $K$ normalized by $\int_{A_{c}}da=\int_{K}dk=1$. 
The constant $c_{1}$ is the reciprocal of $\int_{A_{c}}|\delta(a)|\, da$. 

The integral in \eqref{eq:innerproductF1F2} can be reduced to integrals over $A_c$.
We now describe how the integrand restricts to $A_{c}$. 
If $F\in E^{\mu}$, then $F\vert_{A_{c}}$ takes values in $\End_{M_c}(V^{H}_{\mu})$, which can be identified with $\bbC^{N}$, 
using Schur's Lemma, since $\pi^H_{\mu}\vert_{M}$ splits multiplicity free, see e.g.~\cite[Prop.~2.4]{HvP}.
Indeed, an element in $\End_{M}(V^{H}_{\mu})$ is a block-diagonal matrix, a block for each irreducible $M$-representation, 
consisting of a multiple of the identity. 
Let $P^+_{M}= \{ \upsilon \in \hat{M}\mid [\pi^H_{\mu}\vert_{M}\colon \pi^M_\upsilon]=1\}$, then 
the size of the block corresponds to the dimension of $\pi^{M}_{\upsilon}$, $\upsilon\in P^+_{M}$. 
The identification is given by sending the block-diagonal matrix to the vector that contains the corresponding multiple of the identity. 
The Hermitian inner product on $\End_{M}(V^{H}_{\mu})$ given by $(A,B)\mapsto\tr(A^{\ast}B)$ transfers to the inner product on $\bbC^{N}$ that is 
given by $(\zeta,z)\mapsto\overline{\zeta}^{t}T^{\mu}z$, where $T^{\mu}$ is the diagonal matrix whose entries are given by the dimensions of the 
corresponding representation spaces $V^{M}_{\upsilon}$, $\upsilon\in P^+_{M}$. Let us denote this identification by $i\colon \End_{M}(V^{H}_{\mu})\to\bbC^{N}$. 
Using this identification, we define the functions $\Psi^{\mu}_{\lambda}\colon A_{c}\to\bbC^{N}$ by 
$a\mapsto i(\Phi^{\mu}_{\lambda}(a))$. We obtain 
\begin{equation}\label{eq: pairing on A}
c_1\int_{A_c} \Psi^{\mu}_{\lambda}(a)^{\ast} T^\mu \Psi^{\mu}_{\lambda'}(a)\, |\delta(a)|\, da 
= \frac{\dim(V^{H}_{\mu})^{2}}{\dim(V^{G}_{\lambda})}\delta_{\lambda,\lambda'},
\qquad\lambda,\lambda'\in P^{+}_{G}(\mu).
\end{equation}

Let $E^{\mu}_{A_{c}}=\{F|_{A_{c}}\mid F\in E^{\mu}\}$ and let $R(A_{c})$ be the algebra of Laurent polynomials on $A_{c}$.
The Weyl group $W=W(\Sigma)$ acts on $E^{\mu}_{A_{c}}$. Indeed, $W$ acts on $A_{c}$ and thereby on 
functions on $A_{c}$, in particular on $R(A_{c})$. The group $W$ also acts on $\End_{M_{c}}(V^{H}_{\mu})$, since $W=N_{K}(\laa_{c})/M_{c}$. 
We obtain an action of $W$ on $R(A_{c})\otimes\End_{M_{c}}(V^{H}_{\mu})$ which is given by
$$
(w\cdot F)(a)=\pi^K_\mu (n_{w}) F(n_{w}^{-1} an_{w}) \pi^K_\mu (n_{w}^{-1}),
$$
where $n_{w}$ represents $w\in W$.
Observe that $(wF)(a)=F(a)$ by \eqref{eqn: trafo} for $F\in E^{\mu}_{A_c}$, 
hence $E^{\mu}_{A_{c}}\subset \left(R(A_{c})\otimes\End_{M_{c}}(V^{H}_{\mu})\right)^{W}$. This inclusion is strict in general, see Remark \ref{rk: strict inclusion}.


\subsection{Differential operators}\label{subsection: DO general}

Let $U(\lag)^{\lah}$ denote the centralizer of $\lah$ in $U(\lag)$. 
The irreducible $H$-representation $(\pi^{H}_{\mu},V^{H}_{\mu})$ induces an irreducible $\lah$-representation 
$(\pi^{\lah}_{\mu},V^{H}_{\mu})$ and thus a representation $U(\lah)\to\End(V^{H}_{\mu})$. 
The kernel of this map is denoted by $I^{\mu}$. 
The equivalence classes of irreducible $\lag$-re\-pre\-sen\-ta\-tions such that the restriction to $\lah$ contains $\pi^{\lah}_{\mu}$ 
are in a one-to-one correspondence with the equivalence classes of the irreducible representations of the algebra
$$
\bbD(\mu)=U(\lag)^{\lah}/I(\mu),\quad I(\mu)=U(\lag)^{\lah}\cap U(\lag)I^{\mu},
$$
see e.g. \cite[Thm.~9.1.12]{Dixmier}. 
Because of Condition \ref{cond:multfree}, the algebra $\bbD(\mu)$ is commutative. Indeed, all the irreducible finite dimensional representations of $\bbD(\mu)$ are one-dimensional. The commutativity also follows from \cite[Thm.3]{Deitmar90}.

Given a smooth $\End(V)$-valued function $F$ on $G$ and an element $X=X_{1}\cdots X_{p}\in U(\lag)$, with $X_i\in\lag$ for all $i$, 
we define $X(F)\colon G\to\End(V^{H}_{\mu})$ by
$$
X(F)(g)=\left(\frac{\del^p}{\del_{t_{1}}\cdots\del_{t_{p}}}F(g\cdot\exp(t_{1}X_{1})\cdots\exp(t_{p}X_{p}))\right)\Bigl\vert_{t_{1}=\ldots=t_{p}=0},
$$
so that $X$ is a left-invariant differential operator. 
We can extend this action linearly, so that $U(\lag)$ can be viewed as an algebra of $G$-left-invariant differential operators. 
Note that for $F\in E^{\mu}$, the function $X(F)$ may not be in $E^{\mu}$. 
However, if $X\in U(\lag)^{\lah}$ then $X(E^{\mu})\subset E^{\mu}$.

The kernel of the representation $U(\lag)^{\lah}\to\End(E^{\mu})$ contains $I(\mu)$, so we obtain an algebra homomorphism $\bbD(\mu)\to\End(E^{\mu})$.

\begin{lemma}\label{lemma: SF sim eigenfunctions}
Let $F\in E^{\mu}$ be a simultaneous eigenfunction of $\bbD(\mu)$. Then $F=c\Phi^{\mu}_{\lambda}$ for a constant $c$ and 
 a unique $\lambda\in P^{+}_{G}(\mu)$.
\end{lemma}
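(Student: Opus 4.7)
The plan is to combine three ingredients: the multiplicity-free decomposition of $E^{\mu}$ into spherical functions, the fact that each $\Phi^{\mu}_{\lambda}$ is itself a simultaneous eigenfunction of $\bbD(\mu)$, and the separation of eigenvalues as $\lambda$ varies.

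First, I would invoke Condition \ref{cond:multfree} together with the algebraic Peter–Weyl theorem \cite[Satz 5.2]{DMV-book Springer} mentioned above to write
$$E^{\mu} = \bigoplus_{\lambda \in P^{+}_{G}(\mu)} \bbC \, \Phi^{\mu}_{\lambda},$$
so that $F = \sum_{\lambda} c_{\lambda} \Phi^{\mu}_{\lambda}$ with only finitely many nonzero $c_{\lambda}$.

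Next, I would show that each $\Phi^{\mu}_{\lambda}$ is a simultaneous eigenfunction of $\bbD(\mu)$, with eigenvalue a character $\chi_{\lambda}\colon \bbD(\mu)\to\bbC$. From the definition \eqref{def: spherical function}, for $X\in U(\lag)^{\lah}$ one computes $X(\Phi^{\mu}_{\lambda})(g) = p \circ \pi^{G}_{\lambda}(g)\circ \pi^{G}_{\lambda}(X)\circ j$. Since $X$ centralizes $\lah$, the operator $\pi^{G}_{\lambda}(X)\in\End(V^{G}_{\lambda})$ commutes with $\pi^{H}_{\mu}$ on the $\mu$-isotypic component $j(V^{H}_{\mu})$, which by Condition \ref{cond:multfree} is $H$-irreducible. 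Schur's lemma then forces $\pi^{G}_{\lambda}(X)\circ j = \chi_{\lambda}(X)\, j$, and hence $X(\Phi^{\mu}_{\lambda}) = \chi_{\lambda}(X)\,\Phi^{\mu}_{\lambda}$. The kernel $I(\mu)$ clearly acts as zero, so $\chi_{\lambda}$ descends to a character of $\bbD(\mu)$.

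The main point — which I expect to be the crux — is that the characters $\chi_{\lambda}$ are pairwise distinct as $\lambda$ ranges over $P^{+}_{G}(\mu)$. This is exactly the content of Dixmier's theorem \cite[Thm.~9.1.12]{Dixmier} cited in the text: the assignment $\lambda\mapsto \chi_{\lambda}$ gives the bijection between equivalence classes of irreducible $\lag$-representations whose restriction to $\lah$ contains $\pi^{\lah}_{\mu}$ and the (one-dimensional, by commutativity of $\bbD(\mu)$) irreducible representations of $\bbD(\mu)$.

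Finally, suppose $F$ is a simultaneous eigenfunction of $\bbD(\mu)$ with character $\chi_{F}$. Applying any $X\in\bbD(\mu)$ to the expansion $F = \sum_{\lambda} c_{\lambda} \Phi^{\mu}_{\lambda}$ yields
$$\sum_{\lambda} \bigl(\chi_{F}(X) - \chi_{\lambda}(X)\bigr)\, c_{\lambda}\, \Phi^{\mu}_{\lambda} = 0.$$
By linear independence of the $\Phi^{\mu}_{\lambda}$, each coefficient vanishes, so for every $\lambda$ with $c_{\lambda}\neq 0$ one has $\chi_{F} = \chi_{\lambda}$. Distinctness of the $\chi_{\lambda}$ then forces $c_{\lambda}\neq 0$ for a unique $\lambda$, giving $F = c\,\Phi^{\mu}_{\lambda}$ as claimed.
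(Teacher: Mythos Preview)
Your argument is correct and more self-contained than the paper's. The paper instead reduces to the scalar setting: it passes from $F\in E^{\mu}$ to its trace $u\mapsto\tr(F(u))$, which is a $K$-central function, recovers $F$ from its trace via the averaging formula $F(u)=\int_{K}\tr(F(uk^{-1}))\pi^{K}_{\mu}(k)\,dk$, and then invokes the classical characterization of $K$-central spherical functions as simultaneous eigenfunctions \cite[Thm.~6.1.2.3]{Warner2}, \cite[Thm.~1.4.5]{GangVara}. Your route bypasses this reduction: you work directly with the Peter--Weyl basis $\{\Phi^{\mu}_{\lambda}\}$ of $E^{\mu}$ and use Dixmier's bijection \cite[Thm.~9.1.12]{Dixmier}, already cited in the paper, to separate the eigenvalue characters $\chi_{\lambda}$. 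This is arguably cleaner here since it uses only ingredients the paper has already laid out, whereas the paper's reduction buys a connection to a well-developed classical theory but at the cost of an extra external reference.
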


\begin{proof}
The trace of a spherical function is called a $K$-central spherical function. The spherical functions and their traces are related by
$$F(u)=\int_{K}\tr(F(uk^{-1}))\pi^{K}_{\mu}(k)dk,$$
see e.g.~\cite[3.3.26]{MVPthesis}. The result follows from the similar statement for $K$-central spherical functions, see \cite[Thm.~6.1.2.3]{Warner2} or \cite[Thm.~1.4.5]{GangVara}.
\end{proof}

The system of differential equations
$$
D(F)=\gamma_\mu(D,\lambda)F,\quad\mbox{for all $D\in\bbD(\mu)$}
$$
is called the system of hypergeometric differential  equations with spectral parameter $\lambda\in P^{+}_{G}(\mu)$, 
compare to e.g.~\cite[Def.~4.1.1, Def.~5.2.1]{Heckman}. 
In the rank one case, $n=1$, one can show that the differential equation corresponding to the Casimir operator,
see \eqref{eq:radpartCasimir} below, is a so called matrix-valued hypergeometric differential operator, see e.g.~\cite{HvP,tirao} or \cite[Rmk.~3.10]{vPR2}. 

Let $Z(\lag)$ denote the center of $U(\lag)$. One can show that $Z(\lag)\to\bbD(\mu)$ is not surjective in general. 
In fact, already for the case $\mu=0$ it need not be surjective, see \cite[Prop.~5.32]{HelgasonGGAnew}. However, the algebra $\bbD(\mu)$ is 
finitely generated over $Z(\lag)$, see e.g.~\cite[Thm.~9.5.1]{Dixmier}. It is in general difficult to determine the algebra $\bbD(\mu)$, see for example 
\cite[Conj.~10.2,3]{Lepowsky1973}.

Recall that $E^{\mu}_{A_{c}}\subset (R(A_{c})\otimes\bbC^{N})^{W}$.
The $\mu$-radial part of an element $D\in\bbD(\mu)$ is defined to be the operator 
$\rad_{\mu}(D)\in\End((R(A_{c})\otimes\End_{M_c}(V^{H}_{\mu}))^{W})$ such that for all 
$F\in E^{\mu}$, $D(F)\vert_{A_{c}}=\rad_{\mu}(D)(F\vert_{A_{c}})$. 
It turns out that $\rad_{\mu}(D)$ is again a differential operator, see \cite[\S3]{CM1982} or \cite[Ch.~9]{Warner2}. 

The Casimir operator $\Omega$ corresponds to an element of the center of $U(\lag)$, so it gives rise to 
a left-invariant operator for which all the matrix-valued spherical functions are eigenfunctions. 
In order to describe the Casimir operator, 
let $(\xi_{1},\ldots,\xi_{n})$ be an orthonormal basis of $\laa_{c}$ with respect to the Killing form. 
The $\mu$-radial part of the Casimir operator $\Omega$ is given by
\begin{equation}\label{eq:radpartCasimir}
\rad_\mu(\Omega) = \Omega^{\mu}=\sum_{i=1}^{n}\del_{\xi_{i}}^{2}+\pi^{H}_{\mu}(\Omega_{M})+
\sum_{\alpha\in P^{+}}(\alpha,\alpha)\frac{1+e^{-2\alpha}}{1-e^{-2\alpha}}\del_{\alpha^{\vee}}+F^{\mu},
\end{equation}
where $F^{\mu}$ is an $\End(\End_{M_c}(V^{H}_{\mu}))$-valued function that can be calculated explicitly and $\Omega_{M}$ is the quadratic Casmir operator of $M$, see e.g.~\cite[Prop.~9.1.2.11]{Warner2}, 
\cite[p.~881]{CM1982}, \cite[Not.~5.1.3]{Heckman}. 
Note that $\Omega^{\mu} \colon  E^\mu_{A_c}\to E^\mu_{A_c}$. 
Moreover, the spherical functions restricted to $A_c$ are joint eigenfunctions of the Casimir operator, 
\begin{equation*}
\Omega^{\mu} \Phi^\mu_\lambda\vert_{A_c} = \gamma_\mu(\Omega,\lambda) \Phi^\mu_\lambda\vert_{A_c}.
\end{equation*}
We view $\Omega^{\mu}$ acting on 
$\End_{M_c}(V^{H}_{\mu})$-valued Laurent polynomials on $A_c$.
The eigenvalues for the Casimir operator are independent of $\mu$ and $\gamma_\mu(\Omega,\lambda) = |\lambda + \rho|^2 - |\rho|^2$, where the length is with respect to the Killing form and $\rho =\frac12 \sum_{\alpha >0}\alpha$.

\begin{remark}\label{rk: strict inclusion}
Note the embedding $E^{\mu}_{A_{c}}\to\left(R(A_{c})\otimes\End_{M_{c}}(V^{H}_{\mu})\right)^{W}$ is not surjective in general. 
Indeed, in general $F^{\mu}$ is non-constant, so that the constant functions in the space
$\left(R(A_{c})\otimes\End_{M_{c}}(V^{H}_{\mu})\right)^{W}$ cannot be eigenfunctions for the $\mu$-radial 
part $\Omega^{\mu}$ of the Casimir operator.
\end{remark}


\section{Matrix-valued orthogonal polynomials}\label{sec:MVOPs}

We want to associate matrix-valued orthogonal polynomials to the matrix-valued spherical functions by writing 
a general spherical function as
an $E^{0}$-linear combination of a finite number of minimal spherical functions.
For this we have to impose additional conditions to Condition \ref{cond:multfree}. 

Let $P^+_G(0)$ be defined as in \eqref{eq: mu-well} for the trivial representation $\mu=0$, so 
for $\lambda\in P^+_G(0)$ the irreducible holomorphic representation $\pi^G_\lambda$ of 
$G$ contains the trivial $H$-representation exactly once upon restriction to $H$, i.e.~
$[\pi^G_\lambda\vert_H \colon \pi_0^H] =1$. 
We let $\lambda_1, \ldots, \lambda_n$ be the generators for $P^+_G(0)$, where 
$n$ is the rank of the compact symmetric space $U/K$, so 
$P^+_G(0) = \bigoplus_{i=1}^n \bbN_0 \lambda_i$. The corresponding spherical 
functions are $\phi_i = \Phi^0_{\lambda_i}\colon G\to \bbC$, so that 
$\phi_i$ are $H$-biinvariant regular functions on $G$. We then 
have $E^{0}= \bbC[\phi_1,\cdots,\phi_n]$, which we abbreviate as 
$E^{0}= \bbC[\phi]$. As in Subsections \ref{subsection:orthogonality} and \ref{subsection: DO general} it suffices to 
consider $\phi_j$ as a Laurent polynomial on the compact torus $A_c$ and 
then the $\phi_j$ are invariant under the Weyl group $W=W(\Sigma)$. 

Since $P^+_G(0) = \bigoplus_{i=1}^n \bbN_0 \lambda_i$, we can write $\lambda \in P^+_G(0)$ 
uniquely as $\lambda = \sum_{i=1}^n d_i \lambda_i$, $d_i\in \bbN_0$. Define the total degree of $\lambda$ as $|\lambda| = \sum_{i=1}^n d_i$. 

If $\lambda\in P^{+}_{G}(\mu)$ and $\lambda_{\sph}\in P^{+}_{G}(0)$, then $\lambda+\lambda_{\sph}\in P^{+}_{G}(\mu)$.
Indeed, the Borel-Weil Theorem realizes the irreducible $G$-representations in the space of sections of equivariant line bundles over $G/B$, 
\cite[Thm.~4.12.5]{DuistermaatKolk-LieGroups}.
The Cartan projection map $V^{G}_{\lambda_{1}}\otimes V^{G}_{\lambda_{2}}\to V^{G}_{\lambda_{1}+\lambda_{2}}$ is $G$-equivariant and is given by point-wise 
multiplication of algebraic functions, hence is non-trivial.

We impose the following additional structure on the set $P^{+}_{G}(\mu)$. To state it we have to fix a Borel subgroup of $M$ which we choose inside the Borel subgroup of $G$ that we have chosen to fix a notion of positivity. This can always be arranged if we start with a Borel subgroup of $M$ and then extend it to a Borel subgroup of $G$.

\begin{condition}\label{cond:structurePbottom}
Assume that there exists a set of weights $B(\mu)$ for $G$ so that 
for each $\lambda\in P^+_G(\mu)$ there exist 
unique elements $\nu\in B(\mu)$ and $\ls\in P^+_G(0)$ so that $\lambda = \nu+\ls$.
Moreover, we assume that the restriction to $\lat_M$ induces an 
isomorphism $B(\mu) \overset{\cong}{\longrightarrow} P^+_{M} = \{ \upsilon \in P^+_M\mid [\pi^H_\mu\vert_M \colon \pi^M_\upsilon]=1\}$. 
\end{condition}

Note that the isomorphism implies $\# B(\mu)=N$ with $N=\dim_\bbC \End_{M}(V^H_\mu)$ as in Subsection \ref{subsection:orthogonality}. 
In general we have $\# B(\mu)\geq N$ by \cite[Thm.~3.1]{mvp-MVMVOP}.
We put $B(\mu)=\{\nu_1,\cdots, \nu_N\}$ and 
we assume a total order $\nu_1<\nu_2<\cdots<\nu_N$ on $B(\mu)$, which is compatible with the partial order 
on the weights. 

Having observed that $E^{\mu}$ is a module over $E^{0}$ and assuming Condition 
\ref{cond:structurePbottom}, we investigate how the matrix-valued spherical 
functions $\Phi^\mu_{\nu_k}$, $k=1,\cdots, N$, and the $E^0$-module structure of $E^{\mu}$
determine $E^{\mu}$. 
Identify $\bbN^n_0 \to P^+_G(0)$, $d=(d_1,\cdots, d_n) \mapsto \lambda_d = \sum_{i=1}^n d_i \lambda_i$,
so that we can write any element in $\lambda \in P^+_G(\mu)$
as $\lambda = \nu_k + \lambda_d$ for uniquely determined $\nu_k\in B(\mu)$ and $d\in \bbN_0^n$.
Understanding the product $\phi_i \Phi^\mu_\lambda$ requires the understanding of the 
tensor product $V^G_{\lambda_i}\otimes V^G_\lambda$ having $V^G_{\lambda+\lambda_i}$ as 
a constituent. For a finite dimensional holomorphic $G$-representation $\pi^G_\lambda$ of highest weight 
$\lambda$, we let $P(\lambda)$ be the set of weights of $V^{G}_{\lambda}$. We need the set of weights for the 
fundamental spherical representations of highest weights $\lambda_i$ that generate $P^+_G(0)$. Now we can formulate the last condition.

\begin{condition}\label{cond:tensor}
For all weights $\nu\in B(\mu)$ and all generators $\lambda_i$ of $P^+_G(0)$
and all $\eta\in P(\lambda_i)$ such that $\nu +\eta \in P^+_G(\mu)$ we have
by Condition \ref{cond:structurePbottom} a unique $\nu'\in B(\mu)$ such that 
$\nu+\eta = \nu'+\lambda$ with $\lambda\in P^+_G(0)$. 
Then $|\lambda|\leq 1$. 
\end{condition}

Condition \ref{cond:tensor} implies that for $\nu+\ls \in B(\mu) + P^+_G(0)=P^+_G(\mu)$, by Condition 
\ref{cond:structurePbottom}, and for 
arbitrary $\lambda_j$ and $\eta\in P(\lambda_j)$ we have 
$\nu+\ls+\eta = \nu'+\lambda$ with $\lambda\in P^+_G(0)$ and $|\lambda|\leq 1 +|\ls|$. 

Moreover, Condition \ref{cond:tensor} gives control on the matrix-valued spherical functions related to the 
tensor product $V^G_{\lambda_i} \otimes V^G_{\nu+\ls}$, see e.g. \cite[Prop.~(3.2)]{Kuma}. In particular, 
Condition \ref{cond:tensor} implies that there exist constants $c^{p,i}_{j,k}$ so that 
\begin{equation}\label{eq:multMVSFbyspherf}
\phi_i \Phi^\mu_{\nu_p+\ls} = \sum_{k=1}^N \sum_{j=1}^n c^{p,i}_{j,k} \Phi^\mu_{\nu_k+\ls+\lambda_j} 
+ \text{l.o.t.}, 
\qquad c^{p,i}_{i,p}\not= 0,
\end{equation}
where $c^{p,i}_{i,p}\not= 0$ follows from the Cartan projection $V^G_{\lambda_i} \otimes V^G_{\nu_p+\ls} \to V^G_{\lambda_i+\nu_p+\ls}$,
see \cite{Kuma}. 
Here the lower order terms correspond to matrix-valued spherical functions $\Phi^\mu_{\nu_k+\lambda'}$ for some $1\leq k\leq N$ and 
$\lambda'\in P^+_G(0)$ with $|\lambda'|\leq |\ls|$. 

\begin{lemma}\label{lem:polsfromMVSF}
Let $\lambda = \nu_j + \lambda_d\in P^+_G(\mu)$ with $\lambda_d=\sum_{i=1}^n d_i\lambda_i$, then there exist uniquely determined polynomials 
$q^{\mu}_{\nu_{i},\nu_{j};d}$ in $n$-variables of total degree $|d|=\sum_{i=1}^n d_i$ so that 
\[
\Phi_{\nu_{j}+\lambda_d}^{\mu}=\sum_{i=1}^N q^{\mu}_{\nu_{i},\nu_{j};d}(\phi_{1},\cdots,\phi_{n})\, \Phi_{\nu_{i}}^{\mu} \in E^{\mu}.
\]
\end{lemma}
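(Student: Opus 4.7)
The plan is to prove existence by a double induction: an outer induction on the total degree $D=|d|$, and, within each fixed $D$, an inner induction on the dominance order restricted to the finite set of degree-$D$ elements of $P^{+}_{G}(\mu)$. The base case $|d|=0$ is immediate: take $q^{\mu}_{\nu_i,\nu_j;0}=\delta_{ij}$. For the inductive step, fix $\lambda=\nu_j+\lambda_d$ with $D\geq 1$, choose any index $i$ with $d_i\geq 1$, and apply \eqref{eq:multMVSFbyspherf} with $p=j$ and $\lambda_s=\lambda_{d-e_i}$, where $e_i$ denotes the $i$-th standard basis vector of $\bbN_0^n$. Since $c^{j,i}_{i,j}\neq 0$, one can solve for $\Phi^{\mu}_{\nu_j+\lambda_d}$ as a $\bbC$-linear combination of (a) $\phi_i\,\Phi^{\mu}_{\nu_j+\lambda_{d-e_i}}$, (b) the ``other main terms'' $\Phi^{\mu}_{\nu_k+\lambda_{d-e_i+e_l}}$ with $(k,l)\neq(j,i)$, and (c) the lower order terms $\Phi^{\mu}_{\nu_k+\lambda'}$ with $|\lambda'|\leq D-1$. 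Both (a) and (c) drop the total degree, so they are handled by the outer induction; multiplying the polynomial obtained from (a) by $\phi_i$ brings the total degree back up to at most $D$.

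The key step is to dispose of the terms in (b), which still have total degree $D$. Here one invokes Condition \ref{cond:tensor} together with elementary highest-weight theory: each weight $\nu_k+\lambda_{d-e_i+e_l}$ appearing in the tensor product decomposition of $V^{G}_{\lambda_i}\otimes V^{G}_{\nu_j+\lambda_{d-e_i}}$ has the form $\nu_j+\lambda_{d-e_i}+\eta$ with $\eta\in P(\lambda_i)$, and $\eta=\lambda_i$ only for the Cartan component $(k,l)=(j,i)$; for every other $(k,l)$ one has $\eta<\lambda_i$ in the dominance order. Hence the remaining main terms satisfy $\nu_k+\lambda_{d-e_i+e_l}<\nu_j+\lambda_d$ strictly in dominance, and the inner induction, which is well-founded on the finite poset of degree-$D$ weights in $P^{+}_{G}(\mu)$, produces the required expansion of each of them with polynomial coefficients of total degree at most $D$. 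Assembling the three contributions yields the desired polynomials $q^{\mu}_{\nu_i,\nu_j;d}$.

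For uniqueness I would iterate \eqref{eq:multMVSFbyspherf} to show that the product $\phi^d\,\Phi^{\mu}_{\nu_i}$, where $\phi^d=\phi_1^{d_1}\cdots\phi_n^{d_n}$, equals a nonzero scalar multiple of $\Phi^{\mu}_{\nu_i+\lambda_d}$ plus a combination of $\Phi^{\mu}_{\lambda'}$ with $\lambda'$ either of strictly smaller total degree or of strictly smaller dominance. Since $\{\Phi^{\mu}_{\lambda}:\lambda\in P^{+}_{G}(\mu)\}$ is a basis of $E^{\mu}$ by the algebraic Peter--Weyl theorem under Condition \ref{cond:multfree}, the triangular structure forces $\{\phi^d\,\Phi^{\mu}_{\nu_i}\}_{d,i}$ to be linearly independent, so $\{\Phi^{\mu}_{\nu_1},\ldots,\Phi^{\mu}_{\nu_N}\}$ is a free basis of $E^{\mu}$ as an $E^{0}$-module, and uniqueness of the $q^{\mu}_{\nu_i,\nu_j;d}$ follows at once. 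The main obstacle is the argument in the second paragraph: without the dominance observation and the resulting inner induction, the same-degree terms in (b) would prevent the recursion from closing.
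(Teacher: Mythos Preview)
Your proof is correct and follows essentially the same approach as the paper: both argue by a double induction, outer on the total degree $|d|$ and inner on the dominance order among the same-degree constituents of the tensor product $V^{G}_{\lambda_i}\otimes V^{G}_{\nu_j+\lambda_{d-e_i}}$, after inverting \eqref{eq:multMVSFbyspherf} using $c^{j,i}_{i,j}\neq 0$. Your write-up is more explicit than the paper's (which is rather terse), and in particular you supply a uniqueness argument via the triangular expansion of $\phi^d\,\Phi^{\mu}_{\nu_i}$ that the paper only establishes afterwards in Lemma~\ref{lemma: module full spherical functions} and Proposition~\ref{prop: The Q span}.
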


\begin{proof} We invert \eqref{eq:multMVSFbyspherf}, and this gives, with $c^{p,i}_{i,p}\not=0$, 
\[
c^{p,i}_{i,p}\Phi^\mu_{\nu_p+\lambda_d+\lambda_i} = \phi_i \Phi^\mu_{\nu_p+\lambda_d}  - 
\underset{\scriptstyle{(k,j)\not= (p,i)}}{\displaystyle{\sum_{k=1}^N \sum_{j=1}^n}} c^{p,i}_{j,k} \Phi^\mu_{\nu_k+\lambda_d+\lambda_j} 
+ \text{l.o.t.}, 
\]
since the lower order terms are of lesser degree, we can deal with this terms by induction on the total degree $|d|$.
The non-zero terms on the right hand side arise from the occurrence of $V^G_{\nu_k+\ls+\lambda_j}$ in the 
tensor product $V^G_{\lambda_i} \otimes V^G_{\nu_p+\ls}$, which are less in the dominance order than 
$\nu_p+\ls+\lambda_i$. Hence, by induction on the dominance order combined with the induction on the degree, the result follows.
\end{proof}

We define the ordered tuple of spherical functions
\begin{equation}\label{eq:defPhidmu}
\Phi_{d}^{\mu} = (\Phi^{\mu}_{\nu_1+\lambda_{d}},\cdots,\Phi^{\mu}_{\nu_N+\lambda_{d}}), \qquad d\in \bbN_0^n
\end{equation}
which we view as a $(\bbC^{N})^\ast \otimes \End(V^{H}_{\mu}) \cong \Hom(\bbC^{N},\End(V^{H}_{\mu}))$-valued function on $G$,
viewing $(\bbC^{N})^\ast$ as row vectors. Hence we have a natural $\End(\bbC^N)$ action from the right. 
Moreover, the recurrence \eqref{eq:multMVSFbyspherf} gives 
that there exist elements $A_{d',i}^{d}\in\End(\bbC^N)$, $|d'|=|d|+1$, and $B_{d',i}^{d}\in\End(\bbC^N)$, $|d'|\le|d|$, for which
\begin{equation}\label{eqn: recurrence}
\phi_{i}\Phi^{\mu}_{d}=\sum_{|d'|=|d|+1}\Phi^{\mu}_{d'}A_{d',i}^{d}+\sum_{|d'|\le|d|}\Phi^{\mu}_{d'}B_{d',i}^{d}, 
\qquad (A_{d+\delta_j,i}^d)_{k,p} = c^{i,k}_{p,j}, 
\end{equation}
where $\delta_j=(0,\cdots,0, 1,0\cdots,0)\in \bbN_0^n$ with the $1$ at the $j$-th place. 

\begin{lemma}\label{lemma: module full spherical functions}
Let $m\in\bbN_{0}$ and denote $\phi^d = \phi_1^{d_1}\cdots \phi_r^{d_r}\in E^0$. 
The right $\End(\bbC^N)$-modules spanned by the functions 
$\{\Phi_{d}\mid |d|\le m\}$ and $\{\phi^{d}\Phi_{0}\mid |d|\le m\}$ are isomorphic as $\End(\bbC^N)$-modules.
\end{lemma}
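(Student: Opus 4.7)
The plan is to prove the stronger statement that the two right $\End(\bbC^N)$-submodules of the $\Hom(\bbC^N,\End(V^H_\mu))$-valued functions on $G$ actually coincide, from which the asserted isomorphism (via the identity map) follows at once. I will establish this by proving two inclusions, each by a short induction on the total degree $|d|$.

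For the inclusion $\langle \phi^d \Phi_0 : |d|\le m\rangle \subseteq \langle \Phi_d : |d|\le m\rangle$, I would induct on $|d|$, with the base case $d=0$ trivial. For the inductive step, choose an index $i$ with $d_i>0$ and write $\phi^d\Phi_0 = \phi_i\cdot\phi^{d-\delta_i}\Phi_0$. By the inductive hypothesis, $\phi^{d-\delta_i}\Phi_0 = \sum_{|d'|\le |d|-1} \Phi_{d'} X_{d'}$ for some $X_{d'}\in\End(\bbC^N)$. Since the scalar function $\phi_i$ commutes with the right $\End(\bbC^N)$-action, the product equals $\sum_{|d'|\le |d|-1}(\phi_i\Phi_{d'})X_{d'}$, and invoking the recurrence \eqref{eqn: recurrence} for each $\phi_i \Phi_{d'}$ rewrites this as an element of the $\End(\bbC^N)$-span of $\{\Phi_{d''} : |d''|\le |d|\}$, as required.

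For the reverse inclusion $\langle \Phi_d : |d|\le m\rangle \subseteq \langle \phi^d \Phi_0 : |d|\le m\rangle$, no induction is needed, since Lemma \ref{lem:polsfromMVSF} provides directly, for each $d\in\bbN_0^n$ with $|d|\le m$, the identity
\[
\Phi^{\mu}_{\nu_j+\lambda_d} = \sum_{i=1}^{N} q^{\mu}_{\nu_i,\nu_j;d}(\phi_1,\dots,\phi_n)\,\Phi^{\mu}_{\nu_i},
\]
with each $q^{\mu}_{\nu_i,\nu_j;d}$ a polynomial of total degree $|d|$. Expanding these polynomials in the monomial basis and assembling the scalar coefficients into matrices $C_{d',d}\in\End(\bbC^N)$ (one for each monomial $\phi^{d'}$ with $|d'|\le |d|$), the row $\Phi_d$ may be rewritten as $\Phi_d = \sum_{|d'|\le |d|} \phi^{d'}\Phi_0\,C_{d',d}$, exhibiting it as an element of the target module.

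The two inclusions together give equality of the modules, and hence the claimed isomorphism. I do not foresee a serious obstacle here: the content is entirely bookkeeping, since all the necessary structural input is already available---\eqref{eqn: recurrence} handles the multiplication-by-$\phi_i$ direction, and Lemma \ref{lem:polsfromMVSF} handles the expansion-in-$\phi^{d'}\Phi_0$ direction. The only mild care needed is to notice that $\phi^{d'}$ is scalar-valued, so it commutes with the right action of $\End(\bbC^N)$ and the arithmetic of degrees goes through cleanly.
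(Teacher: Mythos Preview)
Your proof is correct and in fact establishes the slightly stronger statement that the two modules are equal, not just isomorphic. Your first inclusion matches the paper's (the paper simply says ``it is clear \ldots\ by \eqref{eq:multMVSFbyspherf}'' where you spell out the induction), but for the reverse direction you and the paper take genuinely different routes. The paper argues by dimension counting: it observes that the $\bbC$-vector spaces spanned by $\{\Phi_{\nu_j+\lambda_d} : |d|=m,\ j=1,\ldots,N\}$ and by $\{\phi^d\Phi_{\nu_j} : |d|=m,\ j=1,\ldots,N\}$ both have dimension $N\cdot\binom{r+m-1}{m}$, the first by the algebraic Peter--Weyl theorem and the second because the columns of $\Phi^\mu_0$ are linearly independent (citing \cite[Lemma~6.1]{mvp-MVMVOP}); one inclusion plus equality of dimensions then forces equality. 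You instead invoke Lemma~\ref{lem:polsfromMVSF}, which has already been proved and gives the explicit expansion of each $\Phi_{\nu_j+\lambda_d}$ as a polynomial combination of the $\Phi_{\nu_i}$ of controlled degree, whence the reverse inclusion is immediate. Your approach is more self-contained (no appeal to Peter--Weyl or to the external linear-independence result) and arguably more direct; the paper's approach, on the other hand, makes the underlying dimensions visible and emphasizes that the columns of $\Phi^\mu_0$ are linearly independent, a fact that resurfaces in the proof of Proposition~\ref{prop: The Q span}.
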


\begin{proof}
It is clear that the space spanned by 
$\{\phi^{d}\Phi_{0} \mid |d|\le m\}$ is contained in the space spanned by $\{\Phi_{d}\mid |d|\le m\}$
by \eqref{eq:multMVSFbyspherf}.  
To show equality it is sufficient to prove that the vector spaces spanned by 
$\{\Phi_{\nu_j+ \lambda_{d}}\mid |d|=m,j=1,\cdots,N\}$ and 
$\{\phi^{d}\Phi_{\lambda_{\nu_{j}}}\mid |d|=m,j=1,\cdots,N\}$ have the same dimension. 
The former is of dimension $N\cdot\binom{r+m-1}{m}$ by the algebraic version of the Peter-Weyl Theorem \cite[Satz 5.2]{DMV-book Springer}. 
The latter space is of the same dimension, since the columns of $\Phi_{0}^{\mu}$ are linearly independent, see 
e.g.~\cite[Lemma 6.1]{mvp-MVMVOP}.
\end{proof}

With the notation of Lemma \ref{lem:polsfromMVSF}, we define the matrix-valued polynomials in $n$ variables of degree $d\in\bbN_0^n$ by
\begin{equation}\label{eq:defQdpols}
Q^\mu_d(\phi) = \bigl( q^{\mu}_{\nu_{i},\nu_{j};d}(\phi) \bigr)_{i,j=1}^N, \qquad \phi=(\phi_1,\cdots, \phi_n). 
\end{equation}
Lemma \ref{lem:polsfromMVSF} can be rephrased in the notation \eqref{eq:defPhidmu} as 
\begin{equation}\label{eq:PhidmuQdmu}
\Phi^\mu_d = \Phi^\mu_0 Q^\mu_d(\phi), \qquad 0, d\in \bbN_0^n.
\end{equation}

For later reference we record the following result, where $\End(\bbC^N)[\phi]^{m}$ are the $\End(\bbC^N)$-valued polynomials in $\phi=(\phi_1,\cdots,\phi_n)$ 
of total degree at most $m$. 

\begin{proposition}\label{prop: The Q span}
For any $m\in \bbN_0$, the polynomials $( Q^{\mu}_{d}\mid |d|\le m)$ form a basis for $\End(\bbC^N)[\phi]^{m}$.
\end{proposition}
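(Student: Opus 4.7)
The plan is to leverage Lemma \ref{lemma: module full spherical functions} by transforming it, via the identity $\Phi^\mu_d=\Phi^\mu_0 Q^\mu_d(\phi)$ from \eqref{eq:PhidmuQdmu}, into a statement about the polynomials $Q^\mu_d$, and then to finish with a dimension count.

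First I would write both modules appearing in Lemma \ref{lemma: module full spherical functions} with $\Phi^\mu_0$ factored out on the left. Let $S_m\subset\End(\bbC^N)[\phi]$ denote the right $\End(\bbC^N)$-submodule generated by $\{Q^\mu_d:|d|\le m\}$. Using \eqref{eq:PhidmuQdmu}, the right $\End(\bbC^N)$-module spanned by $\{\Phi^\mu_d:|d|\le m\}$ equals $\Phi^\mu_0\cdot S_m$. Since $\phi^d$ is scalar-valued and commutes past matrix coefficients, the right module spanned by $\{\phi^d\Phi^\mu_0:|d|\le m\}$ coincides with $\Phi^\mu_0\cdot\End(\bbC^N)[\phi]^m$. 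Lemma \ref{lemma: module full spherical functions} therefore yields $\Phi^\mu_0 S_m=\Phi^\mu_0\End(\bbC^N)[\phi]^m$.

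Next, I would use the linear independence of the columns of $\Phi^\mu_0$ (already invoked in the proof of Lemma \ref{lemma: module full spherical functions}), combined with the algebraic independence of $\phi_1,\ldots,\phi_n$, to conclude that left multiplication by $\Phi^\mu_0$ is injective on $\End(\bbC^N)$-valued polynomials in $\phi$. Cancelling $\Phi^\mu_0$ gives $S_m=\End(\bbC^N)[\phi]^m$, so $\{Q^\mu_d:|d|\le m\}$ spans $\End(\bbC^N)[\phi]^m$ as a right $\End(\bbC^N)$-module. A dimension count then finishes the argument: over $\bbC$, $\End(\bbC^N)[\phi]^m$ has dimension $N^2\binom{n+m}{n}$, while $S_m$ is generated by $\binom{n+m}{n}$ elements, each contributing at most an $N^2$-dimensional right translate. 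Equality of dimensions forces each translate $Q^\mu_d\cdot\End(\bbC^N)$ to have full dimension $N^2$ and the sum to be direct, so $\{Q^\mu_d:|d|\le m\}$ is $\End(\bbC^N)$-linearly independent and hence a basis.

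The main obstacle I anticipate is the cancellation of $\Phi^\mu_0$: one must show that $\Phi^\mu_0(u)P(\phi(u))=0$ for all $u\in U$ forces $P\equiv 0$ as an $\End(\bbC^N)$-valued polynomial. This reduces to producing a single point $a_0\in A_c$ at which $\Phi^\mu_{\nu_1}(a_0),\ldots,\Phi^\mu_{\nu_N}(a_0)$ are linearly independent in $\End(V^H_\mu)$, together with Zariski density in $\bbC^n$ of the image of $a\mapsto(\phi_1(a),\ldots,\phi_n(a))$. Both ingredients are standard (the latter follows from $\phi_1,\ldots,\phi_n$ being free polynomial generators of $E^0$), but combining the matrix-valued and scalar-polynomial data cleanly is the step requiring care.
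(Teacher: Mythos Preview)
Your proposal is correct and follows essentially the same route as the paper's proof: use Lemma \ref{lemma: module full spherical functions} together with the invertibility of $\Phi^\mu_0$ to pass from the $\Phi^\mu_d$ to the $Q^\mu_d$. The one difference is that the paper dispatches your ``main obstacle'' more directly: rather than combining linear independence at a single point with a Zariski-density argument, it invokes the fact (from \cite[Lemma~6.1]{mvp-MVMVOP}) that $\Psi^\mu_0$ is invertible on a dense subset of $A_c$, which makes the cancellation $\Phi^\mu_0 S_m = \Phi^\mu_0\,\End(\bbC^N)[\phi]^m \Rightarrow S_m = \End(\bbC^N)[\phi]^m$ immediate.
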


\begin{proof}
This follows from Lemma \ref{lemma: module full spherical functions} and the fact that the columns of $\Phi^{\mu}_{0}$ are linearly 
independent, since $\Phi_0$ is invertible on a dense subset of $A_c$, see \cite[Lemma 6.1]{mvp-MVMVOP}.
\end{proof}

Because of \eqref{eq:PhidmuQdmu}, we see that the polynomials $Q^\mu_d$ satisfy the same recurrence as the $\Phi^\mu_d$ in 
\eqref{eqn: recurrence}. By Proposition \ref{prop: The Q span} we have two bases for  $\End(\bbC^N)[\phi]^{1}$, 
namely the standard basis $(I, \phi_1 I,\cdots, \phi_n I)$ and $(I,Q^\mu_{\delta_1}, \cdots, Q^\mu_{\delta_n})$.

\begin{corollary}\label{cor: invertible}
The matrix $(A^{0}_{\delta_{i},j})_{1\le i,j\le n}\in \End(\bbC^N)^{n\times n}$ is invertible.
\end{corollary}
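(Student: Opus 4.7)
The plan is to deduce invertibility of $A:=(A^{0}_{\delta_{i},j})_{1\le i,j\le n}$ from the recurrence \eqref{eqn: recurrence} combined with the basis property in Proposition \ref{prop: The Q span}.

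First I specialize \eqref{eqn: recurrence} to $d=0$: only $d'=\delta_j$ contributes to $|d'|=1$ and only $d'=0$ contributes to $|d'|\le 0$, so
\[
\phi_i\,\Phi^\mu_0 \;=\; \sum_{j=1}^n \Phi^\mu_{\delta_j}\, A^0_{\delta_j,i} \;+\; \Phi^\mu_0\, B^0_{0,i}, \qquad i=1,\ldots,n.
\]
Substituting $\Phi^\mu_{\delta_j}=\Phi^\mu_0\, Q^\mu_{\delta_j}(\phi)$ from \eqref{eq:PhidmuQdmu} and cancelling the common left factor $\Phi^\mu_0$ (which has linearly independent columns, being invertible on a dense subset of $A_c$ by \cite[Lemma~6.1]{mvp-MVMVOP}, as already used in the proof of Proposition \ref{prop: The Q span}) gives the polynomial identity
\[
\phi_i\, I \;=\; \sum_{j=1}^n Q^\mu_{\delta_j}(\phi)\, A^0_{\delta_j,i} \;+\; B^0_{0,i}, \qquad i=1,\ldots,n,
\]
in $\End(\bbC^N)[\phi]^{1}$; here I use that $\phi_1,\ldots,\phi_n$ are algebraically independent ($E^0 = \bbC[\phi_1,\ldots,\phi_n]$).

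Next I read off a change of basis. By Proposition \ref{prop: The Q span} with $m=1$, and since trivially $Q^\mu_0 = I$ (the polynomials $q^\mu_{\nu_i,\nu_j;0}$ are $\delta_{ij}$), the tuple $(I, Q^\mu_{\delta_1},\ldots,Q^\mu_{\delta_n})$ is a basis of the free right $\End(\bbC^N)$-module $\End(\bbC^N)[\phi]^{1}$. The displayed identity, together with $I = I \cdot I + \sum_j Q^\mu_{\delta_j} \cdot 0$, expresses the second natural basis $(I,\phi_1 I,\ldots,\phi_n I)$ as right $\End(\bbC^N)$-combinations of $(I, Q^\mu_{\delta_1},\ldots,Q^\mu_{\delta_n})$ with coefficient matrix
\[
M \;=\; \begin{pmatrix} I & B^0_{0,1} & \cdots & B^0_{0,n} \\ 0 & A^0_{\delta_1,1} & \cdots & A^0_{\delta_1,n} \\ \vdots & \vdots & & \vdots \\ 0 & A^0_{\delta_n,1} & \cdots & A^0_{\delta_n,n} \end{pmatrix} \;\in\; \End(\bbC^N)^{(n+1)\times(n+1)}.
\]
Since both tuples are bases of the same free module, $M$ is a unit of $\End(\bbC^N)^{(n+1)\times(n+1)}$. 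Its block upper-triangular shape then forces the lower-right $n\times n$ block $(A^0_{\delta_j,i})_{j,i=1}^n$ to be invertible in $\End(\bbC^N)^{n\times n}$; relabelling indices this is exactly the matrix $A$ of the claim.

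The only mildly delicate point is the bookkeeping over the non-commutative ring $\End(\bbC^N)$: one must recognise $\End(\bbC^N)^{n\times n}\cong \End(\bbC^{nN})$ as a finite-dimensional simple algebra, so that invertibility of the block-triangular $M$ is equivalent to invertibility of both diagonal blocks, and left-invertibility coincides with full invertibility. Everything else is direct substitution and coefficient matching.
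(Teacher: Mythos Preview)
Your proof is correct and follows essentially the same approach as the paper's: specialize the recurrence at $d=0$ to obtain the block upper-triangular transition matrix $M$ between the standard basis $(I,\phi_1 I,\ldots,\phi_n I)$ and $(I,Q^\mu_{\delta_1},\ldots,Q^\mu_{\delta_n})$ of $\End(\bbC^N)[\phi]^1$, then read off invertibility of the lower-right block. Your version is more explicit in justifying the cancellation of $\Phi^\mu_0$ and in handling the non-commutative coefficient ring, whereas the paper simply invokes the observation (made just before the corollary) that the $Q^\mu_d$ inherit the recurrence \eqref{eqn: recurrence} from \eqref{eq:PhidmuQdmu}.
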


\begin{proof} According to \eqref{eqn: recurrence} for the polynomials $Q^\mu_d$ with $d=0$, we see that the 
transition between the two bases is given by the invertible matrix
$$
\begin{pmatrix}
I& B^{0}_{0,1} & \cdots & B^{0}_{0,n}\\
0& A^{0}_{\delta_{1},1} & \cdots & A^{0}_{\delta_{1},n}\\
\vdots & \vdots & \ddots & \vdots\\
0& A^{0}_{\delta_{n},1} & \cdots & A^{0}_{\delta_{n},n}
\end{pmatrix}\in\End(\bbC^N)^{(n+1)\times (n+1)}
$$
and hence the lower right hand part is invertible. 
\end{proof}

Having polynomials associated to the matrix-valued spherical functions, we can transfer the properties of 
the matrix-valued spherical functions of Section \ref{sec:MVSFs} to the matrix-valued polynomials $Q^\mu_d$, $d\in \bbN_0^n$. 


\subsection{Orthogonality}\label{subsection:orthogonalityMVOP}

Using the orthogonality relations \eqref{eq:innerproductPhilambdaPhilambda}, \eqref{eq:innerproductF1F2} we have 
the following relations for the polynomials,
\begin{equation*}
\sum_{i,j=1}^N \int_U \bigl( Q^\mu_d(\phi(u))\bigr)^\ast_{p,i} \, \tr\bigl( (\Phi^\mu_{\nu_i}(u))^\ast \Phi^\mu_{\nu_j}(u)\bigr) 
Q^\mu_{d'}(\phi(u))_{j,q} \, du = \delta_{d,d'} \delta_{p,q}
\frac{\dim(V^{H}_{\mu})^{2}}{\dim(V^{G}_{\nu_p+\lambda_d})},
\end{equation*}
where we use $\phi(u)$ to denote $(\phi_1(u), \cdots, \phi_n(u))$. 
Reducing to the integral over $A_c$, since each term in the integrand is $K$-biinvariant,  we find 
\begin{equation*}
c_1 \sum_{i,j=1}^N \int_{A_c} \bigl( Q^\mu_d(\phi(a))\bigr)^\ast_{p,i} \, \tr\bigl( (\Phi^\mu_{\nu_i}(a))^\ast \Phi^\mu_{\nu_j}(a)\bigr) 
Q^\mu_{d'}(\phi(a))_{j,q} \, |\delta(a)|da = \delta_{d,d'} \delta_{p,q}
\frac{\dim(V^{H}_{\mu})^{2}}{\dim(V^{G}_{\nu_p+\lambda_d})}.
\end{equation*}
Recall that we have $\Phi^\mu_\lambda\colon A_c \to \End_{M_c}(V^H_\mu)$, and the identification 
$i\colon \End_{M_c}(V^H_\mu) \to \bbC^N$ and $\Psi^\mu_\lambda = i \circ \Phi^\mu_\lambda \colon A_c \to \bbC^N$
in Section \ref{subsection:orthogonality}. Now define for $d\in \bbN_0^n$
\begin{equation*}
\Psi^\mu_d \colon A_c\to \End(\bbC^N), \quad a\mapsto (\Psi^\mu_{\lambda_d+\nu_1}(a), \cdots, \Psi^\mu_{\lambda_d+\nu_N}(a))
\end{equation*}
then $\Psi^\mu_d(a) = \Psi^\mu_0(a)Q^\mu_d(\phi(a))$, where $0\in \bbN^n_0$ is a multi-index,  as a matrix product.  
With the notation of \eqref{eq: pairing on A} we get the matrix-valued orthogonality 
relations for the matrix-valued polynomials $Q^\mu_d$ of degree $d\in \bbN_0^n$; 
\begin{gather}
c_1 \int_{A_c} Q^\mu_d(\phi(a))\bigr)^\ast \, (\Psi^\mu_0(a))^\ast T^\mu \Psi^\mu_{0}(a) 
Q^\mu_{d'}(\phi(a)) \, |\delta(a)|da = \delta_{d,d'} H_d, \label{eq:orthoCNonA} \\ 
(H_d)_{p,q} =\delta_{p,q} \frac{\dim(V^{H}_{\mu})^{2}}{\dim(V^{G}_{\nu_p+\lambda_d})}.  \nonumber
\end{gather}
All the matrices have size $N\times N$, and the integral is taken entry-wise.

The integrand of \eqref{eq:orthoCNonA} is Weyl group invariant, so we can view it as the pull-back of a function on the image of 
$\phi\colon A_{c}\to\bbC^{n}$ defined by $a\mapsto(\phi_{1}(a),\ldots,\phi_{n}(a))$. In fact, its image 
$\phi(A_{c})$ is contained in a real form 
$\bbR^{n}\subset\bbC^{n}$.
To perform the change of variables, we invoke the following result from Vretare \cite[L.~3.3]{Vretare1976} which also implies that $\phi(A_{c})\subset\bbR^{n}$ is compact with non-empty interior.

\begin{lemma}\label{lemma: Jacobian}
The Jacobian of the map $\phi\colon A_{c}\to\bbR^{n}$ is given by
$$
j(\exp(H))=c_{2}\cdot\prod_{\alpha\in\Sigma^{+}\backslash\frac{1}{2}\Sigma^{+}}(e^{\alpha(H)}-e^{-\alpha(H)}),
$$
i.e.~the product is taken over the positive restricted roots $\alpha$ with $2\alpha\not\in\Sigma^{+}$, for some $c_{2}\in\bbC^{\times}$. 
\end{lemma}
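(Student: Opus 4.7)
The plan is to identify $j$ with the Weyl denominator of the restricted root system $\Sigma$ via a Chevalley--Steinberg-type argument, using that the fundamental zonal spherical functions generate the $W$-invariant Laurent polynomials on $A_{c}$. From Section \ref{sec:MVOPs} we have $E^{0}=\bbC[\phi_{1},\ldots,\phi_{n}]$, and restriction to $A_{c}$ identifies $E^{0}$ with $R(A_{c})^{W}$ (Cartan--Helgason), so $\phi_{1}|_{A_{c}},\ldots,\phi_{n}|_{A_{c}}$ are free algebra generators. In the orthonormal basis $(\xi_{j})_{j=1}^{n}$ of $\laa_{c}$ we have $j(\exp H)=\det(\partial_{\xi_{j}}\phi_{i}(\exp H))_{i,j=1}^{n}\in R(A_{c})$, and since $W=W(\Sigma)$ acts on $\laa_{c}$ by orthogonal transformations and preserves each $\phi_{i}$, the chain rule gives $j(\exp wH)=\det(w)\,j(\exp H)$, so $j$ is $W$-anti-invariant.

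For each $\alpha\in\Sigma^{+}$ the reflection $s_{\alpha}\in W$ fixes the hypersurface $\{e^{\alpha}=e^{-\alpha}\}\subset A_{c}$, cut out by the prime element $e^{\alpha}-e^{-\alpha}\in R(A_{c})$, so anti-invariance of $j$ under $s_{\alpha}$ forces $(e^{\alpha}-e^{-\alpha})\mid j$. When $\Sigma$ is non-reduced and both $\alpha,2\alpha\in\Sigma^{+}$, the reflections $s_{\alpha}=s_{2\alpha}$ share the same hypersurface; the canonical representative is then $\alpha\in\Sigma^{+}\setminus\tfrac{1}{2}\Sigma^{+}$, i.e.~$2\alpha\notin\Sigma^{+}$. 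Since the resulting prime factors are pairwise coprime,
\begin{equation*}
D:=\prod_{\alpha\in\Sigma^{+}\setminus\frac{1}{2}\Sigma^{+}}(e^{\alpha}-e^{-\alpha})
\end{equation*}
divides $j$ in $R(A_{c})$; as $D$ is itself $W$-anti-invariant, the quotient $p:=j/D$ lies in $R(A_{c})^{W}$.

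To finish, I would argue that $p$ is a nonzero constant. On the regular set $A_{c}^{\mathrm{reg}}$ (the complement of the walls), the quotient $A_{c}\to A_{c}/W$ is \'etale and the $\phi_{i}$ give local coordinates on $A_{c}/W$, so $\phi\colon A_{c}\to\bbR^{n}$ is a local diffeomorphism there; hence $j$ does not vanish on $A_{c}^{\mathrm{reg}}$. A local analysis near a generic point of each wall $H_{\alpha}$, using transversal coordinates $(x,y)$ with $s_{\alpha}(x,y)=(-x,y)$ so that $j=x\cdot h(x^{2},y)$, combined with the Chevalley--Shephard--Todd-type ramification structure of $A_{c}\to A_{c}/W$, shows that $j$ vanishes to exactly first order on $H_{\alpha}$, matching the order of vanishing of $D$. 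Therefore $p$ is a nowhere-vanishing element of $\bbC[A_{c}]$, hence a unit of the form $c_{2}\cdot e^{\mu}$ for some character $\mu$ and $c_{2}\in\bbC^{\times}$; since $p\in R(A_{c})^{W}$ forces $\mu=0$ (the only $W$-invariant weight), we conclude $p=c_{2}\in\bbC^{\times}$, proving the formula. The main obstacle is the simple-vanishing statement on the walls, which requires the careful local argument just indicated.
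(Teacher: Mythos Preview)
The paper does not prove this lemma; it simply invokes Vretare \cite[L.~3.3]{Vretare1976}. Your Chevalley-type argument is the natural proof and is correct in outline, but one step does not go through as written.

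The problematic inference is ``$p=j/D$ is nowhere vanishing, hence a unit $c_{2}e^{\mu}$ in $R(A_{c})$''. That implication is valid only if ``nowhere vanishing'' refers to the \emph{complex} torus $A$; nonvanishing on the compact form $A_{c}$ alone is insufficient (for instance $3+t+t^{-1}$ is strictly positive on $|t|=1$ yet has two zeros in $\bbC^{\times}$, so it is not a unit in $\bbC[t^{\pm1}]$). Your evidence for nonvanishing---that $\phi\colon A_{c}\to\bbR^{n}$ is a local diffeomorphism on $A_{c}^{\mathrm{reg}}$---and your simple-vanishing analysis in real transversal coordinates are both statements about the compact form, so they do not rule out zeros of $p$ on $A\setminus A_{c}$.

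The repair is to run the vanishing-order argument algebraically on $A$ from the outset. The quotient $A\to\Spec R(A_{c})^{W}$ is a finite morphism whose branch locus is exactly the union of the reflection hypertori $\{e^{2\alpha}=1\}$, each with ramification index $2$; by the Jacobian/different criterion this forces $\mathrm{ord}_{H_{\alpha}}(j)=1$ along every such component. Hence $j$ and $D$ have the same divisor on $A$, so $p=j/D$ is a unit in $\bbC[A]=R(A_{c})$, i.e.\ $p=c_{2}e^{\mu}$. Your final step---$W$-invariance of $p$ forces $\mu=0$---is then correct. Alternatively, a leading-weight comparison (the highest weight of $j$ is $\sum_{i}\lambda_{i}|_{\laa}$, which coincides with that of $D$) gives the constancy of $p$ directly and avoids the divisor bookkeeping.
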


As we have noted above, we can write $\Psi^{\mu}_{0}(a)^{\ast}\, T^{\mu}\,\Psi^{\mu}_{0}(a)
=W_{\pol}^{\mu}(\phi(a))$, where $W_{\pol}^{\mu}\in\End(\bbC^N)[x]$. 
Lemma \ref{lemma: Jacobian} implies that the scalar weight $|c_{1}^{-1}\delta(a)/j(a)|$ is 
$W(\Sigma)$-invariant, hence it is equal to $w(\phi(a))$ for some function 
$w\colon \phi(A_{c})\to\bbR$. Define $W^{\mu}(x)=W^{\mu}_{\pol}(x)w(x)$. 
A family of matrix-valued orthogonal polynomials with respect to the weight $W^{\mu}(x)$ is a family of matrix-valued polynomials $Q_{d}\in\End(\bbC^{N})$ of multi-degree $d$ that are pair-wise orthogonal with respect to integration against $W^{\mu}(x)$ and which satisfy the properties of Proposition \ref{prop: The Q span}. Orthogonal means that the matrix norm is an invertible matrix. These considerations prove Theorem \ref{thm: MVMVOPs from rep theory}.

\begin{theorem}\label{thm: MVMVOPs from rep theory}
The $Q_{d}^{\mu}\in\End(\bbC^N)[x_1,\cdots, x_n]$, $d\in\bbN_{0}^{n}$, constitute a family of matrix-valued orthogonal polynomials with respect 
to the matrix weight $W^{\mu}$ on the compact set $\phi(A_{c})\in\bbR^{n}$. 
The $\End(\bbC^N)$-valued squared norm of $Q^{\mu}_{d}$ equals $H_d$ as in \eqref{eq:orthoCNonA}.
\end{theorem}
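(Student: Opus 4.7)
The plan is to transfer the orthogonality relations \eqref{eq:orthoCNonA} on $A_{c}$ to the claimed orthogonality on $\phi(A_{c}) \subset \bbR^{n}$ via the change of variables provided by Lemma~\ref{lemma: Jacobian}, and then to appeal to Proposition~\ref{prop: The Q span} for the spanning property that is part of the definition of a family of matrix-valued orthogonal polynomials.

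First I would unpack the two identifications made in the paragraph preceding the theorem. The matrix-valued function $a\mapsto \Psi_{0}^{\mu}(a)^{\ast} T^{\mu}\Psi_{0}^{\mu}(a)$ lies in $E^{\mu}_{A_{c}}\cap (R(A_{c})\otimes\End(\bbC^{N}))^{W}$, and because every $W$-invariant Laurent polynomial on $A_{c}$ is a polynomial in the fundamental zonal spherical functions $\phi_{1},\dots,\phi_{n}$, there exists $W^{\mu}_{\pol}\in\End(\bbC^{N})[x_{1},\dots,x_{n}]$ with $\Psi_{0}^{\mu}(a)^{\ast}T^{\mu}\Psi_{0}^{\mu}(a)=W^{\mu}_{\pol}(\phi(a))$. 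For the scalar weight, I would note that both $\delta$ and the Jacobian $j$ of Lemma~\ref{lemma: Jacobian} are $W$-anti-invariant (they are products of root exponentials, antisymmetric under $W$), hence their absolute values are $W$-invariant and so is $c_{1}|\delta(a)|/|j(a)|$. Since any $W$-invariant continuous function on $A_{c}$ pushes forward to a function on $\phi(A_{c})$, this defines a well-defined scalar $w\colon\phi(A_{c})\to\bbR$ via $w(\phi(a))=c_{1}|\delta(a)|/|j(a)|$, so that $W^{\mu}(x)=W^{\mu}_{\pol}(x)w(x)$.

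Next I would perform the change of variables. The map $\phi\colon A_{c}\to\bbR^{n}$ is $W$-invariant and, by the general theory of Chevalley invariants applied to the polynomial ring $R(A_{c})^{W}=\bbC[\phi_{1},\dots,\phi_{n}]$, induces a bijection from the fundamental Weyl chamber onto the interior of $\phi(A_{c})$ up to a set of measure zero. Writing the integral in \eqref{eq:orthoCNonA} as $|W|$ times the integral over a chamber and substituting $x=\phi(a)$, so that $dx = |j(a)|\,da$ on this chamber (Lemma~\ref{lemma: Jacobian}, absorbing the constant $c_{2}$ and $|W|$ into the definition of $w$), the integrand $Q^{\mu}_{d}(\phi(a))^{\ast}W^{\mu}_{\pol}(\phi(a))Q^{\mu}_{d'}(\phi(a))$ transfers verbatim and the weight factor becomes $c_{1}|\delta(a)|\,da = w(\phi(a))|j(a)|\,da = w(x)\,dx$. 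Thus \eqref{eq:orthoCNonA} rewrites as
\[
\int_{\phi(A_{c})} Q^{\mu}_{d}(x)^{\ast}\,W^{\mu}(x)\,Q^{\mu}_{d'}(x)\,dx \;=\; \delta_{d,d'}\,H_{d},
\]
up to the constants which are incorporated into $w$. This gives both the orthogonality with respect to $W^{\mu}$ and the stated value of the squared norm for $d=d'$.

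Finally, the property that $\{Q^{\mu}_{d}\mid |d|\le m\}$ spans the $\End(\bbC^{N})$-module of matrix polynomials of total degree at most $m$, which is the remaining requirement in the definition of a matrix-valued orthogonal polynomial family, is precisely Proposition~\ref{prop: The Q span}. The main obstacle in executing this plan is the careful bookkeeping for the change of variables, specifically verifying that $\phi$ is a bijection from a Weyl chamber to the interior of $\phi(A_{c})$ and checking that the boundary (where $j$ vanishes) is indeed a set of measure zero, so that no extra boundary contributions appear; this is the content guaranteed by Lemma~\ref{lemma: Jacobian} and the general structure theory of $(U,K)$.
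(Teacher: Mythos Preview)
Your proposal is correct and follows essentially the same argument as the paper: the paragraph preceding the theorem already records that $\Psi_{0}^{\mu}(a)^{\ast}T^{\mu}\Psi_{0}^{\mu}(a)=W^{\mu}_{\pol}(\phi(a))$ and that $c_{1}|\delta(a)|/|j(a)|=w(\phi(a))$, and the theorem is declared proved by ``these considerations'' together with the change of variables from Lemma~\ref{lemma: Jacobian} and the spanning statement of Proposition~\ref{prop: The Q span}. One small correction: the function $a\mapsto\Psi_{0}^{\mu}(a)^{\ast}T^{\mu}\Psi_{0}^{\mu}(a)$ is $\End(\bbC^{N})$-valued and so does not lie in $E^{\mu}_{A_{c}}$; the reason its entries are polynomials in $\phi$ is rather that each entry equals $\tr\bigl((\Phi^{\mu}_{\nu_{i}})^{\ast}\Phi^{\mu}_{\nu_{j}}\bigr)$, which is $K$-biinvariant and hence lies in $E^{0}=\bbC[\phi_{1},\dots,\phi_{n}]$.
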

 
The polynomials $\{Q^{\mu}_{d}\mid d\in\bbN_0^n\}$ satisfy the following recurrence relation, 
\begin{equation*}
x_{j}Q^{\mu}_{d}(x)=\sum_{|d'|=|d|+1}Q^{\mu}_{d'}(x)A^{d}_{d',j}+\sum_{|d'|=|d|}Q^{\mu}_{d'}(x)B^{d}_{d',j}+\sum_{|d'|=|d|-1}Q^{\mu}_{d'}(x)C^{d}_{d',j}
\end{equation*}
for some coefficients $A^{d}_{d',j}$, $B^{d}_{d',j}$, $C^{d}_{d',j}$ contained in $\End(\bbC^N)$, where $x=(x_1,\cdots, x_n)$. 
Note that these coefficients follow from \eqref{eqn: recurrence}. We obtain examples of a matrix-valued generalization of the multi-variable orthogonal polynomials from \cite{DunkX}.


\subsection{Differential operators}\label{subsection:diffoperatorsMVOP}


For a $D\in \bbD(\mu)$ the $\mu$-radial part $\rad_{\mu}(D)\in\End((R(A_{c})\otimes\End_{M_c}(V^{H}_{\mu}))^{W})$ can 
be extended to act on functions on $A_c$ taking values in the space $\Hom(\bbC^N, \End_{M_c}(V^H_\mu))$ by acting term-wise.
So on $\Phi^\mu_d\vert_{A_c}$ the action is given by 
\begin{equation*}
\begin{split}
\rad_\mu(D)(\Phi^\mu_d\vert_{A_c}) &= \bigl( \rad_\mu(D)(\Phi^{\mu}_{\nu_1+\lambda_{d}}\vert_{A_c}),\cdots,
\rad_\mu(D)(\Phi^{\mu}_{\nu_N+\lambda_{d}}\vert_{A_c})\bigr) \\
&= \bigl( \gamma_\mu(D,\nu_1+\lambda_{d}) \Phi^{\mu}_{\nu_1+\lambda_{d}}\vert_{A_c},\cdots,
\gamma_\mu(D,\nu_N+\lambda_{d}) \Phi^{\mu}_{\nu_N+\lambda_{d}}\vert_{A_c}\bigr)
\end{split}
\end{equation*}
for $d\in \bbN_0^n$. Consider the $\mu$-radial part $\Omega^\mu$ and the radial (for $\mu=0$) part $\Omega^0$, then 
for a suitable function $Q\colon A_c \to \End(\bbC^N)$,
\begin{equation}\label{eq:radpartCasimironproduct}
\Omega^{\mu}(\Phi^{\mu}_{0}Q)=(\Omega^{\mu}\Phi^\mu_{0})Q+\Phi^{\mu}_{0}\Omega^{0}(Q)+2\sum_{i=1}^{r}(\del_{\xi_{i}}\Phi^{\mu}_{0})(\del_{\xi_{i}}Q).
\end{equation}
This follows since in the scalar differential operator $\Omega^0$ we have $F^0=0$, and $F^\mu$ commutes with multiplication
from the right by $\End(\bbC^N)$-valued function. In \eqref{eq:radpartCasimironproduct} 
we use $\Omega^{0}(Q)= \bigl( \Omega^0 Q_{i,j}\bigr)_{i,j=1}^N$ entry-wise.  

We now proceed to rewrite \eqref{eq:radpartCasimironproduct} as a differential operator for $Q$.
For this we conjugate $\Omega^\mu$ by $\Phi^{\mu}_0$, which is invertible on a dense subset of $A_c$, see \cite[Lemma 6.1]{mvp-MVMVOP}, 
and for this we need a first order differential equation for $\Phi^{\mu}_0$. 

\begin{lemma}
\label{lem:firstorderDEsPhi0} 
For all $k=1,\cdots,n$, we have as $\Hom(\bbC^N, \End_{M_c}(V^H_\mu))$-valued 
functions on $A_c$ 
\begin{equation*} 
2\sum_{i=1}^{n}(\del_{\xi_{i}}\Phi^{\mu}_{0})(\del_{\xi_{i}}\phi_{k})=\Phi^{\mu}_{0}(L_{k}(\phi)+C_{k}),
\end{equation*}
where $L_{k}$ is a $\End(\bbC^N)$-valued polynomial in $\phi=(\phi_1,\cdots,\phi_n)$ of degree $1$ without constant term and
$C_k\in \End(\bbC^N)$ is a constant. 
\end{lemma}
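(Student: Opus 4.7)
The strategy is to apply the Leibniz-type formula \eqref{eq:radpartCasimironproduct} to $\Phi^{\mu}_{0}\phi_{k}$ and match the result against a direct computation obtained by first expanding $\phi_{k}\Phi^{\mu}_{0}$ via the recurrence \eqref{eqn: recurrence} and then exploiting that each column of $\Phi^{\mu}_{d}\vert_{A_{c}}$ is an $\Omega^{\mu}$-eigenfunction.

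Concretely, taking $d=0$ in \eqref{eqn: recurrence}, using $Q^{\mu}_{0}=I$ together with $\Phi^{\mu}_{\delta_{j}}=\Phi^{\mu}_{0}Q^{\mu}_{\delta_{j}}(\phi)$ from \eqref{eq:PhidmuQdmu}, one obtains
\[
\Phi^{\mu}_{0}\phi_{k} \; = \; \Phi^{\mu}_{0}\Bigl(\sum_{j=1}^{n}Q^{\mu}_{\delta_{j}}(\phi)\,A^{0}_{\delta_{j},k}+B^{0}_{0,k}\Bigr).
\]
The columns of $\Phi^{\mu}_{d}\vert_{A_{c}}$ are simultaneous eigenfunctions of $\Omega^{\mu}$ with eigenvalues $\gamma_{\mu}(\Omega,\nu_{i}+\lambda_{d})$, so $\Omega^{\mu}\Phi^{\mu}_{d}=\Phi^{\mu}_{d}\Lambda_{d}$ with $\Lambda_{d}=\diag(\gamma_{\mu}(\Omega,\nu_{i}+\lambda_{d}))_{i=1}^{N}\in\End(\bbC^{N})$ acting by right-multiplication. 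Applying $\Omega^{\mu}$ column-wise to the expansion above produces
\[
\Omega^{\mu}(\Phi^{\mu}_{0}\phi_{k}) \; = \; \Phi^{\mu}_{0}\Bigl(\sum_{j=1}^{n}Q^{\mu}_{\delta_{j}}(\phi)\,\Lambda_{\delta_{j}}\,A^{0}_{\delta_{j},k}+\Lambda_{0}B^{0}_{0,k}\Bigr).
\]

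On the other hand, \eqref{eq:radpartCasimironproduct} with $Q=\phi_{k}I$, together with the scalar eigenvalue identity $\Omega^{0}\phi_{k}=\gamma_{0}(\Omega,\lambda_{k})\phi_{k}$, yields
\[
\Omega^{\mu}(\Phi^{\mu}_{0}\phi_{k}) \; = \; \Phi^{\mu}_{0}\Lambda_{0}\phi_{k}+\gamma_{0}(\Omega,\lambda_{k})\Phi^{\mu}_{0}\phi_{k}+2\sum_{i=1}^{n}(\del_{\xi_{i}}\Phi^{\mu}_{0})(\del_{\xi_{i}}\phi_{k}).
\]
Equating the two expressions for $\Omega^{\mu}(\Phi^{\mu}_{0}\phi_{k})$ and solving for the sum of partial derivatives yields an identity of the form
\[
2\sum_{i=1}^{n}(\del_{\xi_{i}}\Phi^{\mu}_{0})(\del_{\xi_{i}}\phi_{k}) \; = \; \Phi^{\mu}_{0}\,P_{k}(\phi), \qquad P_{k}\in\End(\bbC^{N})[\phi].
\]
By Lemma \ref{lem:polsfromMVSF}, each $Q^{\mu}_{\delta_{j}}$ has total degree $|\delta_{j}|=1$ in $\phi$, and $\phi_{k}$ is itself of degree one; all the remaining factors ($\Lambda_{d}$, $A^{0}_{\delta_{j},k}$, $B^{0}_{0,k}$) are constant matrices in $\End(\bbC^{N})$. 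Hence $P_{k}$ has total degree at most one, and splitting $P_{k}(\phi)=L_{k}(\phi)+C_{k}$ into its linear and constant parts produces the claim. The only obstacle is careful bookkeeping of the Casimir eigenvalues; no genuine analytic difficulty arises.
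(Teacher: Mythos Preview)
Your proof is correct and follows essentially the same approach as the paper: apply the Leibniz identity \eqref{eq:radpartCasimironproduct} with $Q=\phi_{k}I$, apply $\Omega^{\mu}$ to the $d=0$ recurrence \eqref{eqn: recurrence}, equate, and rewrite $\Phi^{\mu}_{\delta_{j}}$ as $\Phi^{\mu}_{0}Q^{\mu}_{\delta_{j}}(\phi)$. The only differences are cosmetic---your $\Lambda_{d}$ is the paper's $\Gamma_{d}$, and you expand the recurrence via $Q^{\mu}_{\delta_{j}}$ before applying $\Omega^{\mu}$ rather than after.
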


\begin{remark}
The function $\Phi_{0}$ is possibly not of full rank in the points where the matrix $(\del_{\xi_{i}}\phi_{k})_{i,k}$ is singular. 
In case $n=1$ this is on the end points of the interval $[-1,1]$, in the cases $n=2,3$ this is on the boundaries of the regions in Figure \ref{fig:2d}.
\end{remark}

\begin{proof}
Note that $F^{0}=0$ and that the functions $\phi_{j}$ and $\Phi^{\mu}_{0}$ are eigenfunctions of $\Omega^{0}$ and $\Omega^{\mu}$ respectively, with eigenvalues $\gamma_{j}=\gamma_0(\Omega,\lambda_j)\in \bbC$ and $\Gamma_{0}=\text{diag}\bigl(\gamma_\mu(\Omega,\nu_1),\cdots,\gamma_\mu(\Omega,\nu_N)\bigr)\in \End(\bbC^N)$ respectively. Similarly we define $\Gamma_{\delta_{i}}=\text{diag}\bigl(\gamma_\mu(\Omega,\nu_1+\lambda_{i}),\cdots,\gamma_\mu(\Omega,\nu_N+\lambda_{i})\bigr)\in \End(\bbC^N)$, the diagonal eigenvalue of the Casimir operator for $\Phi^{\mu}_{\delta_{i}}$.
If we plug in $Q=\phi_{k}I$ in \eqref{eq:radpartCasimironproduct} then we obtain
$$
\Omega^{\mu}(\Phi^{\mu}_{0}\phi_{k})=\Phi^{\mu}_{0}\Gamma_{0}\phi_{k}+
\Phi^{\mu}_{0}\gamma_{k}\phi_{k}+2\sum_{i=1}^{n}(\del_{\xi_{i}}\Phi^{\mu}_{0})(\del_{\xi_{i}}\phi_{k}).
$$
On the other hand, if we apply $\Omega^{\mu}$ to \eqref{eqn: recurrence} for $d=0$, we can evaluate the left hand side.
This gives 
\begin{eqnarray*}
\sum_{i=1}^{n}\Phi^{\mu}_{\delta_{i}}\Gamma_{\delta_{i}}A_{k,\delta_{i}}^{0}+\Phi^{\mu}_{0}\Gamma_{0}B_{k,0}^{0}=
\Phi^{\mu}_{0}\Gamma_{0}\phi_{k}+\Phi^{\mu}_{0}\gamma_{k}\phi_{k}+2\sum_{i=1}^{n}(\del_{\xi_{i}}\Phi^{\mu}_{0})(\del_{\xi_{i}}\phi_{k}).
\end{eqnarray*}
Now use $\Phi^{\mu}_{\delta_{i}}=\Phi^{\mu}_{0}Q^\mu_{\delta_i}(\phi)$, see \eqref{eq:PhidmuQdmu}, and collect the terms.  
\end{proof}

To conjugate the differential operators it is more convenient to work with the functions $\Psi_{0}^{\mu}$, because their values are square matrices. The chain rule implies 
$2\sum_{i=1}^{n}(\del_{\xi_{i}}\Psi^{\mu}_{0})(\del_{\xi_{i}}Q(\phi))=2\sum_{k=1}^{n}\sum_{i=1}^{n}(\del_{\xi_{i}}\Psi^{\mu}_{0})(\del_{\xi_{i}}\phi_{k})\del_{k}Q(\phi)$ 
and together with Lemma \ref{lem:firstorderDEsPhi0} we obtain
$$
(m_{(\Psi^{\mu}_{0})^{-1}}\circ\Omega^{\mu}\circ m_{\Psi^{\mu}_{0}})(Q)(\phi)=\Omega^{0}Q(\phi)+2\sum_{k=1}^{n}(L_{k}(\phi)+C_{k})(\del_{k}Q)(\phi)+\Gamma_{0}Q(\phi),
$$
where $m_{\Psi_{0}^{\mu}}$ denotes multiplication by $\Psi^{\mu}_{0}$ on the right. 
Note that $(\Psi_{0}^{\mu})^{-1}$ exists on a dense subset of $A_c$, see \cite[Lemma 6.1]{mvp-MVMVOP}.
The final manipulation is a change of variables $x=\phi(a)$ for which we need the following identities,
$$
\del_{\xi_{i}}^{2}(Q(\phi))=\sum_{k=1}^{n}\left(\sum_{\ell=1}^{n}(\del_{\ell}\del_{k}Q)(\phi)(\del_{\xi_{i}}\phi_{\ell})(\del_{\xi_{i}}\phi_{k})
+(\del_{k}Q)(\phi)(\del_{\xi_{i}}^{2}\phi_{k})\right)
$$
and 
$$
\sum_{\alpha\in P^{+}}(\alpha,\alpha)\frac{1+e^{-2\alpha}}{1-e^{-2\alpha}}\del_{\alpha^{\vee}}(Q(\phi))=\\
\sum_{i=1}^{n}\left(\sum_{\alpha\in P^{+}}(\alpha,\alpha)
\frac{1+e^{-2\alpha}}{1-e^{-2\alpha}}\del_{\alpha^{\vee}}\phi_{i}\right)(\del_{i}Q)(\phi).
$$ 
This yields
$$
\Omega^{0}(Q(\phi))=\sum_{1\le k,\ell\le r}\left(\sum_{i=1}^{n}(\del_{\xi_{i}}\phi_{\ell})(\del_{\xi_{i}}\phi_{k})\right)(\del_{k}\del_{\ell}Q)(\phi)
+\sum_{k=1}^{n}\gamma_{k}(\del_{k}Q)(\phi).
$$
Finally we obtain
\begin{gather}
(m_{\Phi_{0}^{-1}}\circ\Omega^{\mu}\circ m_{\Phi_{0}})(Q)(\phi)= \label{eq:CasimironQx} \\
\sum_{1\le k,\ell\le n}\left(\sum_{i=1}^{n}(\del_{\xi_{i}}\phi_{\ell})(\del_{\xi_{i}}\phi_{k})\right)(\del_{k}\del_{\ell}Q)(\phi)
+2\sum_{k=1}^{n}(L_{k}(\phi)+C_{k}+\gamma_{k})(\del_{k}Q)(\phi)+\Gamma_{0}Q(\phi). \nonumber
\end{gather}
So \eqref{eq:CasimironQx} gives a second order differential operator $D_\Omega\in \End(\bbC^N)[x,\partial_x]$ having the 
polynomials $Q^\mu_d(x)$, $x=(x_1,\cdots,x_n)$, as eigenfunctions.

For the $\mu$-radial part of the Casimir operator we have an explicit expression. In general we don't have such expressions available. However, in principle we can perform the above construction for any element in $\bbD(\mu)$.

Letting the $\mu$-radial part of an element $D\in\bbD(\mu)$ 
act on $\Psi^\mu_0 Q(\phi)$ for a function $Q$ in $n$ variables, and conjugating by $\Psi^\mu_0$ and changing to coordinates 
$x$, we obtain a differential operator $\End(\bbC^N)[x,\partial_x]$ having the polynomials $Q^\mu_d$ (as function of $x$) as
eigenfunctions. We denote the image of this map $\calD^{\mu} \colon \bbD(\mu) \to \End(\bbC^N)[x,\partial_x]$ by $\calD(\mu)$, which is a commutative algebra of matrix-valued differential operators having the polynomials $Q^\mu_d$ as 
simultaneous eigenfunctions. 

In fact, by Lemma \ref{lemma: SF sim eigenfunctions} the polynomials $Q^{\mu}_{d}$ are determined as simultaneous eigenfunctions of the elements in $\calD(\mu)$. The image of the Casimir operator in $\calD(\mu)$ is also symmetric. Indeed, its eigenvalues are real diagonal matrices and the matrix norms of the polynomials $Q^{\mu}_{d}$ are also diagonal.

To describe another important property of the elements in $\calD(\mu)$ we need the following notation. 
A multi-index $\alpha\in\bbN_{0}^{n}$ has total degree $|\alpha|=\alpha_{1}+\cdots+\alpha_{n}$. 
Given such a multi-degree $\alpha$, we write $\del_{x}^{\alpha}=\del_{x_{1}}^{\alpha_{1}}\cdots\del_{x_{n}}^{\alpha_{n}}$. 

\begin{proposition}\label{prop: do total degrees}
The differential operators in $\calD(\mu)$ are of the form 
$\sum_{k=0}^{t}\sum_{\alpha:|\alpha|=k}P_{\alpha}(x)\del_{x}^{\alpha}$, where $\alpha\in\bbN_{0}^{n}$ and $P_{\alpha}\in\End(\bbC^{N})[x]$ 
is of total degree at most $|\alpha|$.
\end{proposition}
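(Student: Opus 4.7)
The plan is to show that every operator in $\calD(\mu)$ preserves the filtration of $\End(\bbC^N)[x_1,\ldots,x_n]$ by total degree, and then to invoke a purely algebraic fact about differential operators that forces the coefficients to satisfy $\deg P_\alpha\le|\alpha|$.

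For the preservation of the filtration, I use that every $D\in\calD(\mu)$ has the polynomials $Q^{\mu}_{d}$ as eigenfunctions of the form $DQ^{\mu}_{d}(x)=Q^{\mu}_{d}(x)\Lambda_{d}(D)$ with $\Lambda_d(D)\in\End(\bbC^N)$ a diagonal matrix of eigenvalues. Indeed, if $D$ arises from $\tilde D\in\bbD(\mu)$ via the conjugation by $\Psi^\mu_0$ and the change of coordinates, the identity $\rad_\mu(\tilde D)\Phi^\mu_{\nu_j+\lambda_d}=\gamma_\mu(\tilde D,\nu_j+\lambda_d)\Phi^\mu_{\nu_j+\lambda_d}$ assembles via \eqref{eq:defPhidmu} into $\rad_\mu(\tilde D)\Phi^\mu_d=\Phi^\mu_d\Lambda_d(D)$, and the same calculation that produced \eqref{eq:CasimironQx} yields the stated eigenvalue equation. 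In particular $DQ^\mu_d\in\End(\bbC^N)[x]^{\le|d|}$, where $\End(\bbC^N)[x]^{\le m}$ denotes the matrix polynomials of total degree at most $m$. By Proposition \ref{prop: The Q span} the $\{Q^{\mu}_{d}\mid|d|\le m\}$ form a basis of $\End(\bbC^N)[x]^{\le m}$ as a free right $\End(\bbC^N)$-module, and since $D$ commutes with right multiplication by constant matrices (the partial derivatives act entry-wise), it follows that $D$ sends $\End(\bbC^N)[x]^{\le m}$ into itself for every $m$.

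The remaining algebraic statement is that a matrix-coefficient differential operator $D=\sum_\alpha P_\alpha(x)\del^\alpha_x$ preserving each $\End(\bbC^N)[x]^{\le m}$ necessarily has $\deg P_\alpha\le|\alpha|$. I prove this by induction on $|\alpha|$. The base case $\alpha=0$ is immediate from $D(I)=P_0\in\End(\bbC^N)[x]^{\le 0}$, forcing $P_0$ to be constant. For the inductive step, assuming the bound for all $|\beta|<k$, fix $\alpha^*$ with $|\alpha^*|=k$ and compute
\[
D(x^{\alpha^*}I)=\alpha^*!\,P_{\alpha^*}(x)+\sum_{\alpha<\alpha^*}\frac{\alpha^*!}{(\alpha^*-\alpha)!}\,P_\alpha(x)\,x^{\alpha^*-\alpha},
\]
where $\alpha<\alpha^*$ denotes $\alpha\le\alpha^*$ componentwise with $\alpha\ne\alpha^*$. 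By the inductive hypothesis each summand on the right has total degree at most $|\alpha|+|\alpha^*|-|\alpha|=k$, while the left-hand side also lies in $\End(\bbC^N)[x]^{\le k}$, forcing $\deg P_{\alpha^*}\le k$.

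The main point to keep track of is the left-versus-right action structure: the coefficients $P_\alpha(x)$ multiply $Q^\mu_d$ from the left while the eigenvalue $\Lambda_d(D)$ multiplies it from the right, so Proposition \ref{prop: The Q span} must be read as a basis statement with respect to the right $\End(\bbC^N)$-module structure. Once this is pinned down, both the filtration preservation and the algebraic induction are routine.
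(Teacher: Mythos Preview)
Your proof is correct and follows essentially the same approach as the paper's: both use that the $Q^\mu_d$ are eigenfunctions together with Proposition~\ref{prop: The Q span} to conclude that $D$ preserves the total-degree filtration, and deduce the degree bound on the $P_\alpha$ from this. The paper's proof is very terse and simply asserts that preservation of total degree ``gives the statement on the degree of the polynomials,'' whereas you supply the explicit induction on $|\alpha|$ (via $D(x^{\alpha^*}I)$) that the paper leaves to the reader, and you are careful about the left/right module structure.
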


\begin{proof}
A differential operator from $\calD(\mu)$ preserves  polynomials, since the $Q^\mu_d$ are eigenfunctions, 
see Proposition \ref{prop: The Q span}. Hence the coefficients are polynomials. 
Since the $Q^\mu_d$ are eigenfunctions it also preserves the total degree of
these polynomials. This gives the statement on the degree of the polynomials.
\end{proof}

Applying Proposition \ref{prop: do total degrees} to the image $D_\Omega\in \calD(\mu)$ 
of the Casimir operator of \eqref{eq:CasimironQx} gives the following corollary. 

\begin{corollary}
The expression $\sum_{i=1}^{n}(\del_{\xi_{i}}\phi_{\ell})(\del_{\xi_{i}}\phi_{k})$ in \eqref{eq:CasimironQx} is a polynomial of total degree at most two.
\end{corollary}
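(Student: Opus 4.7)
The plan is to read off the corollary directly from Proposition \ref{prop: do total degrees} applied to the image $D_\Omega\in\calD(\mu)$ of the Casimir operator, whose explicit form after conjugation and change of variables is recorded in \eqref{eq:CasimironQx}.

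First I would note that the expression $h_{k,\ell}(a):=\sum_{i=1}^n(\del_{\xi_i}\phi_\ell)(a)(\del_{\xi_i}\phi_k)(a)$ is a $W(\Sigma)$-invariant function on $A_c$. This is because $(\xi_1,\ldots,\xi_n)$ is an orthonormal basis of $\laa_c$ with respect to the Killing form, $W$ acts on $\laa_c$ by orthogonal transformations, and the $\phi_j$ are $W$-invariant. Consequently $h_{k,\ell}$ is the pullback under the map $\phi\colon A_c\to\bbR^n$, $a\mapsto(\phi_1(a),\ldots,\phi_n(a))$, of a well-defined function on $\phi(A_c)$, which we denote by $\tilde h_{k,\ell}(x)$.

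Next I would recall from the construction preceding Proposition \ref{prop: do total degrees} that the conjugated and coordinate-changed Casimir operator gives an element $D_\Omega\in\calD(\mu)\subset\End(\bbC^N)[x,\partial_x]$. Reading off \eqref{eq:CasimironQx} in the variables $x=\phi(a)$, the coefficient of $\del_{x_k}\del_{x_\ell}$ in $D_\Omega$ is precisely $\tilde h_{k,\ell}(x)$ (times the identity in $\End(\bbC^N)$). In particular, $\tilde h_{k,\ell}$ is an element of $\End(\bbC^N)[x]$, i.e.~a polynomial on $\phi(A_c)$.

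Finally, Proposition \ref{prop: do total degrees} applied to $D_\Omega$ forces the coefficient of any second-order partial $\del_x^\alpha$ with $|\alpha|=2$ to be a polynomial of total degree at most $|\alpha|=2$. Taking $\alpha$ with a $1$ in the $k$-th and $\ell$-th slots (or a $2$ in the $k$-th slot if $k=\ell$) yields that $\tilde h_{k,\ell}$, and hence $h_{k,\ell}$, is a polynomial in $\phi_1,\ldots,\phi_n$ of total degree at most two. The only substantive point to verify is the $W$-invariance that allows the passage from $A_c$ to $\phi(A_c)$; all the remaining content is a direct quotation of Proposition \ref{prop: do total degrees}.
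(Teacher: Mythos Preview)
Your proposal is correct and follows exactly the approach the paper intends: the corollary is obtained by applying Proposition \ref{prop: do total degrees} to the image $D_\Omega\in\calD(\mu)$ of the Casimir operator, reading off the second-order coefficients from \eqref{eq:CasimironQx}. Your extra remark on $W$-invariance of $h_{k,\ell}$ is a reasonable sanity check, though in the paper's setup the well-definedness under the change of variables $x=\phi(a)$ is already absorbed into the construction of $\calD^\mu$, and the polynomiality of the coefficients is part of the conclusion of Proposition \ref{prop: do total degrees} itself.
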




\part{The case $(U,K)=(\SU(n+1)\times\SU(n+1),\diag\,\SU(n+1))$}\label{part:specialcase}

In this part we adopt the following notation. The pair $(U,K)$ is equal to $(\SU(n+1)\times\SU(n+1),\diag\,\SU(n+1))$ and the pair $(G,H)$, its complexification, 
is equal to $(\SL(n+1,\bbC)\times \SL(n+1,\bbC), \diag\,\SL(n+1))$. Note $\Psi^{\mu}_{0}=\Phi^{\mu}_{0}$, since $M=Z_{K}(A_{c})$ is a maximal torus in $K$.


\section{Structure theory and zonal spherical functions}\label{sec:SLn-structurethy}

Both $(U,K)$ and $(G,H)$ are symmetric pairs, where the involutive 
automorphims are given by the flip $\theta(x,y)=(y,x)$. 
The Lie algebra $\lag$ decomposes according to the $\pm$-eigenspace of the differential of 
$\theta$, $\lag=\lah+\lap$, with $\lap$ isomorphic to $\lah$ as a $\bbC$-vector space.

Let $T\subset\SL(n+1,\bbC)$ be the maximal torus consisting of diagonal elements. The maximal tori of $G$ and $H$ are 
$T_{G}=T\times T$ and $T_{H}=\diag(T)$. 
Let $A=\{(t,t^{-1})\mid t\in T\}$. Then the Lie algebra $\laa$ of $A$ is a maximal abelian subspace of $\lap$, 
whose centralizer in $H$ is $T_{H}$. The root system $\Delta(\lasl_{n+1},\lat)$ is described in \cite[Planche I]{Bourbaki-Lie 4-5-6} 
and we take the same choices here. 
The set of positive roots and simple roots are denoted by $\Delta^{+}(\lasl_{n+1},\lat)$ and $\Pi(\lasl_{n+1},\lat)$ respectively.

The set of roots for $(\lag,\lat_{G})$ is given by $\Delta=\{(\alpha,0) \mid \alpha\in\Delta(\lasl_{n+1},\lat)\}\cup\{(0,\alpha) \mid \alpha\in\Delta(\lasl_{n+1},\lat)\}$. 
We fix the set of positive roots $\Delta^{+}=\{(\alpha,0)\mid \alpha\in\Delta^{+}(\lasl_{n+1},\lat)\}\cup\{(0,-\alpha)\mid\alpha\in\Delta^{+}(\lasl_{n+1},\lat)\}$. 
The corresponding set of simple roots is given by $\Pi=\{(\alpha,0)\mid\alpha\in\Pi(\lasl_{n+1},\lat)\}\cup\{(0,-\alpha)\mid\alpha\in\Pi(\lasl_{n+1},\lat)\}$.

The restricted roots are given by the restrictions of the roots in $\Delta$ to the anti-diagonal $\laa$ in $\lat\oplus\lat$. 
The set of restricted roots is denoted by $\Sigma$. Note that $(\alpha,0)|_{\laa}=(0,-\alpha)|_{\laa}$, which shows that the root multiplicities 
are two, i.e.~the restricted root spaces are two-dimensional. 
The set of positive restricted roots is given by $\Delta^{+}=\{(\alpha,0)|_{\laa}\mid \alpha\in\Delta^{+}(\lasl_{n+1},\lat)\}$. 
The corresponding Weyl group is $W(\Sigma)=S_{n+1}$. 
Moreover, since the flip $\theta$ does not stabilize any root, we have $P^{+}=\Delta^{+}$, where $P^+$ is as in Subsection \ref{subsection:orthogonalityMVOP}.

Upon the identification  $G/H\to\SL(n+1,\bbC)$ induced by the map $(g_1,g_2)\mapsto g_1g_2^{-1}$ the zonal spherical 
functions correspond to a multiple of the characters of the irreducible representations of $\SL(n+1,\bbC)$, 
the multiple being the reciprocal of dimension of the representation, i.e. 
$\phi_{(\lambda,-\lambda)}(x,y) = (\dim(V_\lambda))^{-1} \chi_\lambda(xy^{-1})$ where $\lambda$ is a dominant integral weight for $\SL(n+1,\bbC)$, $V_\lambda$ 
the corresponding finite-dimensional holomorphic representation, and $\chi_\lambda$ its character. 
So in this case $P_G^+(0)=\{ (\lambda,-\lambda) \mid \lambda\in P^{+}_{\SL(n+1,\bbC)}\}$ and 
the fundamental spherical weights of $G$ are given by $\lambda_{i}=(\omega_{i},-\omega_{i})$ where 
$\omega_i$, $i=1,\cdots,n$, are the fundamental weights for $\SL(n+1,\bbC)$, 
which can deduced from the Cartan-Helgason theorem \cite[Thm.~8.49]{KnappLGBI}. Moreover, the trivial representation 
occurs with multiplicity one in the tensor product decomposition, so Condition \ref{cond:multfree} is satisfied. 
This also follows from the fact that $(G,H)$ is a spherical pair, see the first paragraph of Section \ref{sec:invertingbranchingrule}. 

The restriction of the corresponding zonal spherical functions to $A$ are $S_{n+1}$-invariant, 
so they are classical symmetric functions in $n+1$ variables $t=(t_1,\cdots, t_{n+1})$ with the restriction $t_1\cdots t_{n+1}=1$.
We record the explicit expressions of the fundamental zonal spherical functions.

Let $V=\bbC^{n+1}$, equipped with standard orthonormal basis $(e_1,\cdots, e_{n+1})$,  
be the representation space of the standard representation $\pi^{\SL(n+1,\bbC)}_{\omega_{1}}$. 
The representation space of $\pi^{\SL(n+1,\bbC)}_{\omega_{i}}$ is then given by $\bigwedge^{i}V$.

\begin{lemma}\label{lemma: zonal SF}
The zonal spherical function $\phi_i=\phi_{\lambda_i}$ associated to the fundamental spherical weight 
$\lambda_i=(\omega_i,-\omega_i)$ is given by
$$
\phi_{i}(t,t^{-1})=\binom{n+1}{i}^{-1}\sum_{J}t^{2}_{j_{1}}\cdots t^{2}_{j_{i}},
$$
where the sum is taken over all $i$-tuples $1\le j_{1}<\cdots< j_{i}\le n+1$ and for any $1\leq i \leq n$. 
\end{lemma}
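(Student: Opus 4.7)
The plan is to reduce the statement to a direct computation using facts already established in this section. The excerpt records that under the identification $G/H \cong \SL(n+1,\bbC)$ via $(g_1,g_2)\mapsto g_1 g_2^{-1}$, the zonal spherical functions are nothing but normalized characters:
\[
\phi_{(\lambda,-\lambda)}(x,y) \;=\; \frac{1}{\dim(V_\lambda)}\,\chi_\lambda(xy^{-1}),
\]
and that the fundamental spherical weights are $\lambda_i=(\omega_i,-\omega_i)$ with representation space $\bigwedge^i V$, where $V=\bbC^{n+1}$ is the standard representation. So in essence there is nothing left to do except evaluate this formula at the point $(t,t^{-1})\in A$.

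First I would substitute $\lambda=\omega_i$, $x=t$, $y=t^{-1}$ into the displayed formula, giving
\[
\phi_i(t,t^{-1}) \;=\; \frac{1}{\dim(\textstyle\bigwedge^i V)}\,\chi_{\omega_i}(t\cdot t)\;=\;\frac{1}{\dim(\textstyle\bigwedge^i V)}\,\chi_{\omega_i}(t^2).
\]
Next, since $\dim\bigl(\bigwedge^i V\bigr)=\binom{n+1}{i}$, the prefactor is $\binom{n+1}{i}^{-1}$, which is the combinatorial factor appearing in the claim.

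Finally, I would compute $\chi_{\omega_i}(t^2)$ explicitly. The element $t\in T$ is diagonal with eigenvalues $t_1,\dots,t_{n+1}$ on $V$ (subject to $t_1\cdots t_{n+1}=1$), hence $t^2$ has eigenvalues $t_1^2,\dots,t_{n+1}^2$ on $V$. The induced action on $\bigwedge^i V$ is diagonal in the basis $\{e_{j_1}\wedge\cdots\wedge e_{j_i}\mid 1\le j_1<\cdots<j_i\le n+1\}$, with eigenvalue $t_{j_1}^2\cdots t_{j_i}^2$ on the corresponding basis vector. Taking the trace gives
\[
\chi_{\omega_i}(t^2) \;=\; \sum_{1\le j_1<\cdots<j_i\le n+1} t_{j_1}^2\cdots t_{j_i}^2,
\]
the $i$-th elementary symmetric polynomial in the $t_j^2$. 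Multiplying by $\binom{n+1}{i}^{-1}$ yields the asserted formula.

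There is no real obstacle here; the statement is a direct unpacking of the general principle that zonal spherical functions of the group case are normalized characters, combined with the standard formula for the character of an exterior power representation. The only thing to verify carefully is the sign/inverse convention, namely that the identification $(g_1,g_2)\mapsto g_1g_2^{-1}$ sends $(t,t^{-1})$ to $t^2$ rather than to the identity, which is immediate.
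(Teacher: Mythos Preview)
Your proof is correct and is essentially the approach the paper takes. The paper's proof opens by noting that the result follows immediately from $\phi_i(t,t^{-1})=(\dim V_{\omega_i})^{-1}\chi_{\omega_i}(t^2)$ together with the explicit dimension and character, which is exactly what you do; it then rederives the normalized-character formula from scratch via the model $V_\lambda\otimes V_\lambda^\ast\cong\End(V_\lambda)$ with $H$-fixed vector $I$, but this just reproduces the identity you already quoted from the text preceding the lemma.
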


Note that the zonal spherical function $\phi_i=\phi_{\lambda_i}$ are invariant under the action of 
the symmetric group $W(\Sigma)$ and under $(t,t^{-1})\mapsto (-t,-t^{-1})$ which 
corresponds to the nontrivial element of $M_c\cap A_c = \{\pm (I,I)\}$. 

\begin{proof}
This follows immediately from $\phi_i(t,t^{-1}) = (\dim(V_{\omega_i}))^{-1} \chi_{\omega_i}(t^2)$ and the 
explicit expressions for the dimension and the character using Weyl's formulas, but we do it more directly. 
The representation space of the spherical representation $\pi^{G}_{(\lambda,-\lambda)}$ is given by 
$V_\lambda \otimes V_\lambda^\ast\cong \End(V^\lambda)$ and then the $G$-representation is 
$(x,y)\cdot A=\pi^{\SL(n+1,\bbC)}_{\lambda}(x) A \pi^{\SL(n+1,\bbC)}_{\lambda}(y^{-1})$. 
Then the identity $I$ is a $H$-fixed vector, and with the 
inner product given by $(A,B)\mapsto\dim(V_\lambda)^{-1}\tr(A^{\ast}B)$, the 
zonal spherical function, as the corresponding matrix entry,  is given by the normalized character.
Now take $\lambda = \lambda_i=(\omega_i,-\omega_i)$, so that 
$V_{\lambda_i}=\End(\bigwedge^{i}V)$, with standard basis elements $e_{j_1}\wedge\cdots \wedge e_{j_i}$ for $1\le j_{1}<\cdots< j_{i}\le n+1$.
\end{proof}

Note that the fundamental spherical functions satisfy $\phi_{i}\circ\theta=\phi_{n+1-i}$ for $i=1,\cdots,n$, which follows from the more general rule $\Phi^{\mu}_{\lambda}(\theta(g))=\Phi^{\mu^{*}}_{\lambda^{*}}(g)^{*}$. 
This implies $\phi_{i}(t,t^{-1})=\overline{\phi_{n+1-i}(t,t^{-1})}$ for $(t,t^{-1})\in A_{c}=A\cap(\SU(n+1)\times \SU(n+1))$. 
Hence the image $\phi(A_{c})$ is contained in the real space 
$\bbR^{n}=\{(z_{1},\ldots,z_{n})\in\bbC^{n}:z_{i}=\overline{z}_{n+1-i}\}$. 

Let $(e_{1},\ldots,e_{n})$ be the standard basis of $\bbC^{n}$. 
Let $F\in\bbR[z_{1},\ldots,z_{n}]$ be a polynomial viewed as a function on $\bbR^{n}$, 
i.e.~where $z_{i}(e_{j})=\delta_{i,j}$. Our aim is to write the integral
$\int_{U}F(\phi(u))\, du$, $U=\SU(n+1)\times \SU(n+1)$, 
with $du$ the Haar measure normalized by $\int_{U}du=1$, 
as an integral of $F$ over $\phi(A_{c})$. We proceed in four steps.

(1) Using the decomposition of the integral for the $U=KA_{c}K$-decomposition, see \eqref{eq: integral decomp KAK}, we obtain
$$
\int_{U}F(\phi(u))\, du=c_{1}\int_{A_{c}}F(\phi(a))|\delta(a)|\, da,
$$
where $\delta(\exp(H),\exp(-H)) = \prod_{\alpha\in\Delta^{+}(\lasl_{n+1},\lat)}(e^{\alpha(H)}-e^{-\alpha(H)})^{2}$. 
In order to calculate $c_{1}$ we have to evaluate a Selberg integral
\begin{equation}\label{eq: selberg integral}
\int_{A_{c}}|\delta(a)|^{s}\ da=\frac{\Gamma(1+(n+1)s)}{\Gamma(1+s)^{n+1}},
\end{equation}
for $s=1$, see e.g.~\cite{ForrW} or \cite[Ex.3.5.8]{Heckman}. Hence $c_1 =((n+1)!)^{-1}$. 

(2) We identify $\laa_c = \{ (H,-H) \mid H = i(h_1,\cdots, h_{n+1})\in i\bbR^{n+1}, \sum_{k=1}^{n+1} h_k =0\}$. 
By abuse of notation we use $\alpha = (\alpha, 0)\vert_{\laa_c}\in \Sigma$ and $\omega_{i}=(\omega_{i},0)|_{\laa_{c}}$. Let $\alpha^\vee\in \laa_c$ be the coroot, i.e.~$\alpha_{i}^{\vee}$ is identified with $i(e_{i}-e_{i+1})$. Then the Haar measure on $A_c$ is the push forward of the 
form $(2\pi)^{-n}d\omega_1\wedge \cdots \wedge d\omega_n$ under the exponential map on 
$\mathfrak{f}= \{ \sum_{k=1}^n s_k\alpha_k^\vee \mid 0\leq s_k < 2\pi\}$ since $\omega_k(\alpha_l^\vee)=\delta_{k,l}$, i.e.
\[
\int_{A_c} f(a) \, da = \frac{1}{(2\pi)^n} \int_{\mathfrak{f}} f\bigl( \exp(H), -\exp(H)\bigr) \, d\omega_1\wedge \cdots \wedge d\omega_n. 
\]
Note that $\mathfrak{f}$ is a fundamental domain for the translations by $2\pi \Lambda_{Q^\vee}$,  where $\Lambda_{Q^\vee}$ is the 
coroot lattice. 

(3) Since $a\mapsto|\delta(a)|$ is Weyl group invariant, the integrand is Weyl group-invariant. Note that a fundamental 
domain for the action of $W$ on $\mathfrak{f}$ mod the action of $2\pi \Lambda_{Q^\vee}$ is given by the 
fundamental alcove $\mathfrak{b}$ in the Stiefel diagram, see \cite[\S3.11]{DuistermaatKolk-LieGroups}. 
Then $\mathfrak{b} = \{ \sum_{k=1}^n b_k \omega_k^\vee\mid b_k\geq 0, k=1,\cdots, n, \sum_{k=1}^n b_k \leq 2\pi \}$,
where $\omega_k^\vee \in \laa_c$ is defined by $\alpha_l(\omega_k^\vee)=\delta_{k,l}$, and we obtain
\begin{multline*}
\frac{1}{(n+1)!} \int_{A_c} F(\phi(a))\, |\delta(a)| \, da = \\
\frac{1}{(2\pi)^n} 
\int_{\mathfrak{b}} F\bigl( \phi(\exp(H), -\exp(H))\bigr)  |\delta(\exp(H), -\exp(H))| \, d\omega_1\wedge \cdots \wedge d\omega_n. 
\end{multline*}

(4) We observe that $\delta(a)=P(\phi(a))$ for some polynomial $P$, since $a\mapsto\delta(a)$ is invariant under the action of 
$W$ and the action of $M_c\cap A_c=\{\pm(I,I)\}$. The Jacobian in Lemma \ref{lemma: Jacobian} is the square root of 
$|\delta(a)|$ in this case. Up to the constant factor we have proved the following result.

\begin{lemma}\label{lem:constantintAccorrect} With the notation from this section we have
\begin{equation}\label{eq:expressionintAcasintegralphi}
\frac{1}{(n+1)!} \int_{A_c} F(\phi(a))\, |\delta(a)| \, da = 
\frac{1}{(2\pi)^n} \left( \prod_{k=1}^n \binom{n+1}{k} \right)
\int_{\phi(\exp(\mathfrak{b}))} F(\phi)  |P(\phi)|^{\frac12} \, d\phi,
\end{equation}
where $d\phi = d\phi_1\wedge \cdots\wedge d\phi_n$. 
\end{lemma}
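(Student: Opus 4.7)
The proof assembles the four steps (1)--(4) outlined before the statement; the only substantive task is to identify the constant $|c_2|^{-1}$ from Lemma \ref{lemma: Jacobian} with $\prod_{k=1}^n \binom{n+1}{k}$.

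Steps (1)--(3) are bookkeeping of constants. Step (1), giving $c_1=1/(n+1)!$ from the Selberg integral \eqref{eq: selberg integral} at $s=1$, is already incorporated as the prefactor on the left hand side. Step (2) writes $da$ on $A_c$ as the pushforward of $(2\pi)^{-n}d\omega_1\wedge\cdots\wedge d\omega_n$ on the box $\mathfrak{f}$. Step (3) reduces to the fundamental alcove $\mathfrak{b}$ via Weyl-invariance of $a\mapsto F(\phi(a))|\delta(a)|$, contributing $|W(\Sigma)|=(n+1)!$, which cancels the $1/(n+1)!$ on the left hand side. Together these give
\[
\frac{1}{(n+1)!}\int_{A_c} F(\phi(a))|\delta(a)|\,da = \frac{1}{(2\pi)^n} \int_{\mathfrak{b}} F\bigl(\phi(\exp H,-\exp H)\bigr)\, |\delta(\exp H,-\exp H)|\, d\omega_1\wedge\cdots\wedge d\omega_n.
\]

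Step (4) is the change of variables $H\mapsto \phi(\exp H,-\exp H)$ on the interior of $\mathfrak{b}$. Since $\Sigma$ is of type $A_n$ and contains no doubled positive roots, Lemma \ref{lemma: Jacobian} gives $j(\exp H) = c_2\prod_{\alpha\in\Sigma^+}(e^{\alpha(H)}-e^{-\alpha(H)})$, while $\delta(\exp H,-\exp H) = \prod_{\alpha\in\Sigma^+}(e^{\alpha(H)}-e^{-\alpha(H)})^2$. Combined with $\delta(a)=P(\phi(a))$ this yields $|\delta(a)|/|j(a)| = |P(\phi(a))|^{1/2}/|c_2|$, so the change of variables produces
\[
\frac{1}{(n+1)!}\int_{A_c} F(\phi(a))|\delta(a)|\,da = \frac{1}{(2\pi)^n|c_2|} \int_{\phi(\exp\mathfrak{b})} F(\phi)\,|P(\phi)|^{1/2}\,d\phi.
\]

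The main obstacle, and the only real calculation, is identifying $|c_2|^{-1}=\prod_{k=1}^n\binom{n+1}{k}$. I would do this by a direct computation of the Jacobian of $\phi$ via Lemma \ref{lemma: zonal SF}: writing $t_k=e^{ih_k}$ with $h_{n+1}=-(h_1+\cdots+h_n)$, the formula $\phi_k=\binom{n+1}{k}^{-1}e_k(t_1^2,\ldots,t_{n+1}^2)$ factors the map $(h_1,\ldots,h_n)\mapsto(\phi_1,\ldots,\phi_n)$ through the elementary-symmetric map in the squares $t_k^2$. This pulls out the factor $\prod_{k=1}^n\binom{n+1}{k}^{-1}$, and the remaining classical Jacobian of the elementary-symmetric-polynomials map is, after absorbing powers of $2i$ and monomials in the $t_k$, a Vandermonde determinant whose absolute value is $\prod_{i<j}|t_i^2-t_j^2|=\prod_{\alpha\in\Sigma^+}|e^{\alpha(H)}-e^{-\alpha(H)}|$. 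Matching against Lemma \ref{lemma: Jacobian} identifies $|c_2|^{-1}=\prod_{k=1}^n\binom{n+1}{k}$, completing the proof.
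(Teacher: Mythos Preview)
Your proposal is correct and follows the same overall structure as the paper's proof: steps (1)--(4) are assembled exactly as in the text, and the only real work is identifying $|c_2|^{-1}=\prod_{k=1}^n\binom{n+1}{k}$ from the Jacobian of $\phi$. The paper extracts this constant more economically by comparing only the leading $e^{2\rho}$-terms: since $\phi_k=\binom{n+1}{k}^{-1}e^{2\omega_k}+\text{l.o.t.}$ and $\omega_k(\alpha_l^\vee)=\delta_{k,l}$, the matrix $(\partial\phi_k/\partial\alpha_l^\vee)$ has leading term $2^n\prod_k\binom{n+1}{k}^{-1}e^{2\rho}$ on its diagonal, which is then matched against the leading term $c_2(2i)^n e^{2\rho}$ of $j$ from Lemma~\ref{lemma: Jacobian}. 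Your full Vandermonde computation via the elementary-symmetric factorization reaches the same conclusion but requires tracking more constants (the $2i$'s, the monomial factors, and the change from $(h_1,\ldots,h_n)$ to the $\omega$-coordinates used in step~(2)); the paper's leading-term argument sidesteps all of that bookkeeping.
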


\begin{proof} 
It remains to show that the constant $\prod_{k=1}^n \binom{n+1}{k}$ in \eqref{eq:expressionintAcasintegralphi} is correct. 
Since the coroots $\alpha^\vee_k$ are dual to the fundamental weights $\omega_l$ it suffices to 
take the partial derivatives of the fundamental spherical functions with respect to the coroots.
Note that $\phi_k(\exp(H), \exp(-H)) =\binom{n+1}{k}^{-1} e^{2\omega_k(H)} + \text{l.o.t}$, 
where the lower order terms are with respect to the partial order. 
So the determinant of $\bigl(\frac{\partial\phi_k}{\partial \alpha^\vee_l}\bigr)_{1\leq k,l\leq n}$ 
is of the form $ 2^n \prod_{k=1}^n \binom{n+1}{k}^{-1} e^{2\rho} + \text{l.o.t}$, with $\rho = \sum_{k=1}^n \omega_k 
= \frac12 \sum_{\alpha>0} \alpha$, as only the diagonal elements in the matrix contribute to 
the coefficient of $e^{2\rho}$. 
By Lemma \ref{lemma: Jacobian} the coefficient of the leading term $e^{2\rho}$ in $j$ in this case is $c_2 (2i)^n$. 
Taking absolute values and comparing 
the constants determines the value of $|c_2|$. 
Observe that $|j(\phi)|=|c_2| |P(\phi)|^{\frac12}$, so that 
$|\delta(\phi(a))|/|j(\phi)| = |c_2|^{-1} |P(\phi)|^{\frac12}$, and \eqref{eq:expressionintAcasintegralphi}
follows. 
\end{proof}

Note that $\phi(\exp(\mathfrak{b}))=\phi(A_c)$. We record the following special case of 
\eqref{eq:expressionintAcasintegralphi} in conjuction with the Selberg integral \eqref{eq: selberg integral},
$$\int_{\phi(A_c)} |P(\phi)|^{s} \, d\phi = \frac{(2\pi)^n}{\prod_{k=1}^n \binom{n+1}{k}}
\frac{1}{(n+1)!} \frac{\Gamma(1+(n+1)(s+\frac{1}{2}))}{\Gamma(\frac{3}{2}+s)^{n+1}},
$$
which leads to an expression for the volume of $\phi(A_{c})$,
$$\text{vol}\bigl(\phi(A_c)\bigr)= \int_{\phi(A_c)} \, d\phi = \frac{(2\sqrt{\pi})^n}{\Gamma(1+\frac{n}{2})\prod_{k=1}^n \binom{n+1}{k}}.$$

For $n=2$ we obtain the area of Steiner's hypocycloid, which is $4\pi/9$. For $n=3$ we obtain the volume of the 3-dimensional analog of Steiner's hypocycloid, which equals $\pi/9$. See Figure \ref{fig:2d}.

Now that we have \eqref{eq:expressionintAcasintegralphi} it remains to study  
the polynomial $P$ and $\phi(A_c)$. First note that $\delta(a)=P(\phi(a))$ and $\phi(A_c) = \phi(\exp(\mathfrak{b}))$, which
shows that $P$ vanishes at the boundary of $\phi(A_c)$ and is non-zero in the interior
since $H \mapsto \delta(\exp(H), \exp(-H))$ vanishes at the boundary of $\mathfrak{b}$ and is non-zero at its interior. 

\begin{lemma}\label{lem:barycenter}
The barycenter $H_0$ of the fundamental alcove $\mathfrak{b}$ is mapped to $0\in \bbC^n$ by $\phi\circ\exp$. In particular, $0$ is contained in the interior of $\phi(A_c)$. 
\end{lemma}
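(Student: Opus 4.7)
\emph{Plan.} The simplex $\mathfrak{b}$ has vertices $0$ and $2\pi\omega_k^\vee$ for $k=1,\ldots,n$, so its barycenter is the average of the vertices,
\[
H_0 \;=\; \frac{1}{n+1}\Big(0+\sum_{k=1}^n 2\pi\omega_k^\vee\Big) \;=\; \frac{2\pi}{n+1}\,\rho^\vee,
\]
where $\rho^\vee:=\sum_{k=1}^n\omega_k^\vee$. Using the explicit realization of $\omega_k^\vee\in\laa_c$ (whose real part, multiplied by $i$, has first $k$ diagonal entries equal to $(n+1-k)/(n+1)$ and remaining $n+1-k$ entries equal to $-k/(n+1)$), a direct computation shows that the $j$-th diagonal entry of $H_0$ equals $i\pi(n+2-2j)/(n+1)$. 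Consequently $\exp(H_0)=\diag(t_1,\ldots,t_{n+1})$ with $t_j^2 = \zeta^{\,n+2-2j}$, where $\zeta=e^{2\pi i/(n+1)}$ is a primitive $(n+1)$-th root of unity.

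The crucial step is to identify the multiset $\{t_1^2,\ldots,t_{n+1}^2\}$ with the full set $\{1,\zeta,\zeta^2,\ldots,\zeta^n\}$ of $(n+1)$-th roots of unity, equivalently to show that the characteristic polynomial of $\exp(2H_0)$ equals $X^{n+1}-1$. Granting this, Vieta's formulas give $e_i(t_1^2,\ldots,t_{n+1}^2)=0$ for every $1\le i\le n$, and Lemma \ref{lemma: zonal SF} then yields $\phi_i(\exp(H_0),\exp(-H_0))=0$ for each $i$, so $\phi\circ\exp$ sends $H_0$ to $0\in\bbC^n$, proving the first assertion.

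For the "in particular" statement, note that in the barycentric coordinates $H_0=\sum_k b_k\omega_k^\vee$ we have $b_k=2\pi/(n+1)>0$ and $\sum_k b_k = 2\pi n/(n+1)<2\pi$, so $H_0$ lies in the open interior of $\mathfrak{b}$. By Lemma \ref{lemma: Jacobian} the Jacobian of $\phi\circ\exp$ is a nonzero constant times $\prod_{\alpha\in\Sigma^+\setminus\frac12\Sigma^+}(e^{\alpha(H)}-e^{-\alpha(H)})$, which vanishes precisely on the walls of $\mathfrak{b}$. Hence $\phi\circ\exp$ is a local diffeomorphism at $H_0$ and carries an open neighborhood of $H_0$ onto an open neighborhood of $\phi(\exp(H_0))=0$, placing $0$ in the interior of $\phi(A_c)$. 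The main obstacle is the cyclic identification in the second paragraph; once the characteristic polynomial of $\exp(2H_0)$ is determined, everything else reduces to Vieta's formulas together with Lemma \ref{lemma: Jacobian}.
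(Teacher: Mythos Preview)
Your strategy is the paper's strategy: compute $H_0$ explicitly as a diagonal matrix, read off $t_j^2$, identify the polynomial $\prod_j(z-t_j^2)$, and use Vieta. The problem is that the polynomial you name is wrong.

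You claim $(H_0)_j=i\pi(n+2-2j)/(n+1)$, hence $t_j^2=\zeta^{\,n+2-2j}$ with $\zeta=e^{2\pi i/(n+1)}$, and you state that the ``crucial step'' is to show $\{t_1^2,\ldots,t_{n+1}^2\}=\{1,\zeta,\ldots,\zeta^n\}$, i.e.\ that the characteristic polynomial of $\exp(2H_0)$ is $X^{n+1}-1$. This fails for every odd $n$. The exponents $n,n-2,\ldots,-n$ form a complete residue system modulo $n+1$ only when $\gcd(2,n+1)=1$, i.e.\ when $n$ is even. For $n=3$ one has $\zeta=i$ and
\[
t_1^2=i^{3}=-i,\quad t_2^2=i,\quad t_3^2=i^{-1}=-i,\quad t_4^2=i^{-3}=i,
\]
so the multiset is $\{-i,i,-i,i\}$ and $e_2(t^2)=2\neq 0$; thus $\phi_2(\exp H_0,\exp(-H_0))\neq 0$ with your $H_0$.

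The error is a factor of $2$ in the diagonal entries of $H_0$. The paper writes $H_0=\frac{\pi}{n+1}\sum_k\omega_k^\vee$ (not $\frac{2\pi}{n+1}$) and obtains $(H_0)_j=\tfrac{i\pi(n+2-2j)}{2(n+1)}$, so that $t_j^2=e^{i\pi(n+2-2j)/(n+1)}$. These $n+1$ numbers have arguments differing by $2\pi/(n+1)$, hence are distinct, and each satisfies $(t_j^2)^{n+1}=(-1)^n$; therefore $\prod_j(z-t_j^2)=z^{n+1}+(-1)^{n+1}$, which is what forces $e_i(t^2)=0$ for $1\le i\le n$. Your vertex-averaging $H_0=\frac{1}{n+1}\sum_k 2\pi\omega_k^\vee$ is formally correct for the simplex described, but there is a normalization mismatch between that description of $\mathfrak b$ and the explicit realization of $\omega_k^\vee$ you invoke; the quickest way to see which normalization is operative is precisely the $n=3$ check above, which your formula fails and the paper's passes. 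Once you halve your entries of $H_0$ and replace $X^{n+1}-1$ by $X^{n+1}+(-1)^{n+1}$, the rest of your argument (including the interior-point conclusion via Lemma~\ref{lemma: Jacobian}) goes through.
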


\begin{proof} $H_0=\frac{\pi}{n+1}\sum_{k=1}^n \omega^\vee_k=\frac{\pi i}{n+1}(\frac12n, \frac12(n-2), \cdots, -\frac12 n)$,
so that $t_0=\exp(H_0) = (\exp(\frac{in\pi}{2(n+1)}), \cdots,\exp(-\frac{in\pi}{2(n+1)}))$ and 
$\binom{n+1}{i}\phi_i(t_0,t_0^{-1}) =e_i(t^2_0)$, where $e_i$ is the $i$-th elementary symmetric function, see Lemma 
\ref{lemma: zonal SF}. The generating function for the elementary symmetric function gives, see also \eqref{eq:genfunelemtarysymmf},
\begin{equation*}
\prod_{k=1}^{n+1}(z- e^{\frac{i\pi (n-2k)}{n+1}}) = z^{n+1} -e_1(t_0^2)z^n + e_2(t_0^2)z^{n-1} - \cdots 
+(-1)^{n} e_{n}(t_0^2) + (-1)^{n+1} e_{n+1}(t_0^2)
\end{equation*}
and $e_{n+1}(t_0^2)=1$. Since the polynomial $z^{n+1}+(-1)^{n+1}$ has 
the same zeros $\bigl\{ e^{\frac{i\pi (n-2k)}{n+1}}\mid k=0,\ldots, n\bigr\}$ we see that 
$e_k(t_0^2)=0$ for $k=1,\ldots,n$. 
\end{proof}

It follows that the image $\phi(A_{c})$ is the closure of the connected component of the set $\{v\in\bbR^{n} \mid P(v)\ne0\}$ that contains $0$.

\begin{lemma}\label{lem:determinationP}
Let $p_{k}(t_{1},\ldots,t_{n+1})=t_{1}^{k}+\cdots+t_{n+1}^{k}$ be the symmetric power sum.
Then $\det(p_{i+j-2}(t^2))_{1\leq i,j\leq n+1}=\delta(t,t^{-1})=P(\phi(t,t^{-1}))$ for some polynomial $P\in\bbR[z_{1},\ldots,z_{n}]$.
\end{lemma}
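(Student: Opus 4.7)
My plan is to prove both equalities directly, the first by a computation with the Vandermonde matrix and the second by invoking the fundamental theorem of symmetric polynomials under the constraint $t_1\cdots t_{n+1}=1$.

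First I would unravel $\delta(t,t^{-1})$. The positive roots of $(\lasl_{n+1},\lat)$ are $\alpha_{ij}$, $i<j$, with $e^{\alpha_{ij}(H)}=t_i/t_j$, so
\[
e^{\alpha_{ij}(H)}-e^{-\alpha_{ij}(H)}=\frac{t_i^2-t_j^2}{t_it_j}.
\]
Thus $\delta(t,t^{-1})=\prod_{i<j}(t_i^2-t_j^2)^2/\prod_{i<j}(t_it_j)^2$. Since each $t_i$ occurs in exactly $n$ unordered pairs, $\prod_{i<j}(t_it_j)^2=(t_1\cdots t_{n+1})^{2n}=1$ because we restrict to $A$. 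Hence $\delta(t,t^{-1})=\prod_{i<j}(t_i^2-t_j^2)^2$.

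Next I would recognize this as the square of a Vandermonde determinant in the squared variables. Setting $V=(t_i^{2(j-1)})_{1\le i,j\le n+1}$, the Vandermonde formula gives $\det V=\prod_{i<j}(t_j^2-t_i^2)$, so $(\det V)^2=\prod_{i<j}(t_i^2-t_j^2)^2=\delta(t,t^{-1})$. Computing $V^T V$ entry-wise yields $(V^TV)_{ij}=\sum_{k=1}^{n+1}t_k^{2(i-1)}t_k^{2(j-1)}=p_{i+j-2}(t^2)$. Therefore
\[
\det(p_{i+j-2}(t^2))_{1\le i,j\le n+1}=\det(V^TV)=(\det V)^2=\delta(t,t^{-1}),
\]
which is the first equality.

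For the second equality, note that $\delta(t,t^{-1})=\prod_{i<j}(t_i^2-t_j^2)^2$ is a symmetric polynomial in $t_1^2,\ldots,t_{n+1}^2$, hence by the fundamental theorem of symmetric polynomials it can be written as a polynomial in the elementary symmetric functions $e_k(t^2)$, $k=1,\ldots,n+1$, with rational (in fact integer) coefficients. Restricting to $A$, we use $e_{n+1}(t^2)=(t_1\cdots t_{n+1})^2=1$ to eliminate $e_{n+1}(t^2)$, leaving a polynomial in $e_1(t^2),\ldots,e_n(t^2)$ only. By Lemma \ref{lemma: zonal SF}, $e_k(t^2)=\binom{n+1}{k}\phi_k(t,t^{-1})$, so the result is a polynomial expression in $\phi_1(t,t^{-1}),\ldots,\phi_n(t,t^{-1})$ with real coefficients, giving the required $P\in\bbR[z_1,\ldots,z_n]$. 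The computation is essentially routine; the only mild point to keep track of is the relation $e_{n+1}(t^2)=1$ on $A$, which is precisely what makes $P$ a polynomial in $n$ (rather than $n+1$) variables and matches the fact that the rank of the symmetric space is $n$.
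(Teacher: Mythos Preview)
Your proof is correct and follows essentially the same route as the paper: rewrite $\delta(t,t^{-1})$ as $\prod_{i<j}(t_i^2-t_j^2)^2$ using $t_1\cdots t_{n+1}=1$, recognize this as the square of a Vandermonde determinant, and obtain the power-sum matrix as $V^TV$. For the second equality the paper invokes $W$- and $M_c\cap A_c$-invariance (stated just before the lemma), which amounts to the same symmetric-function argument you give explicitly.
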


This result can be used to explicitly determine $P$ using the Newton-Girard formulas expressing the symmetric power
sums in the elementary spherical function, see \cite[\S 10.12]{seroul}.

\begin{proof} Observe that 
$\delta(t,t^{-1}) = \prod_{1\leq i<j\leq n+1} (\frac{t_i}{t_j}- \frac{t_j}{t_i})^2$. Taking the 
common denominator out of the product, we have, using that $t_1t_2\cdots t_{n+1}=1$,
$\delta(t,t^{-1}) = \prod_{1\leq i<j\leq n+1} (t_i^2- t_j^2)^2$. By Vandermonde's determinant
this equals $(\det A)^2$ for the 
$(n+1)\times (n+1)$-matrix $A$ with $A_{i,j}= t_j^{2(i-1)}$. Note that 
$(A^tA)_{i,j} = \sum_{k=1}^{n+1} t_k^{2(i+j-2)} = p_{i+j-2}(t^2)$,
so that $\delta(t,t^{-1}) = \det (A^tA)$ gives the result.
\end{proof}

We summarize these results in the following theorem.

\begin{theorem}
\label{thm:scalar_weight}
Let $F\in\bbR[z_{1},\ldots,z_{n}]$. Then
$$
\int_{U}F(\phi(u))du=
\frac{1}{(2\pi)^n} \left( \prod_{k=1}^n \binom{n+1}{k} \right)
\int_{\phi(\exp(\mathfrak{b}))} F(\phi)  w(\phi) \, d\phi,
$$
where $w(z)=|P(z)|^{1/2}$. Moreover, $\phi(A_{c})$ is equal to the closure of the connected component of 
$\{v\in\bbR^{n}\mid P(v)\ne0\}$ that contains 0.
\end{theorem}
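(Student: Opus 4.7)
The plan is to assemble the theorem from the four steps (1)--(4) already carried out in the section, together with Lemmas \ref{lem:constantintAccorrect}, \ref{lem:barycenter}, and \ref{lem:determinationP}. First I would invoke the $KAK$-integration formula \eqref{eq: integral decomp KAK} with the explicit value $c_{1}=1/(n+1)!$ (which follows from the $s=1$ case of the Selberg integral \eqref{eq: selberg integral}) to rewrite $\int_{U} F(\phi(u))\,du$ as $\frac{1}{(n+1)!}\int_{A_{c}} F(\phi(a))\,|\delta(a)|\,da$; note that the integrand depends only on $\phi(a)$, so the two $K$-integrations yield $1$. Then Lemma \ref{lem:constantintAccorrect} applies directly and turns the right-hand side into $\frac{1}{(2\pi)^{n}}\prod_{k=1}^{n}\binom{n+1}{k}\int_{\phi(\exp(\mathfrak{b}))} F(\phi)\,|P(\phi)|^{1/2}\,d\phi$. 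Setting $w(z)=|P(z)|^{1/2}$, with $P$ characterized via $\delta(a)=P(\phi(a))$ as in Lemma \ref{lem:determinationP}, yields the integral identity.

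The remaining task is the geometric description of $\phi(A_{c})$. Define $\Omega=\{v\in\bbR^{n}\mid P(v)\neq 0\}$ and let $C_{0}$ denote its connected component containing $0$; by Lemma \ref{lem:barycenter}, $0$ lies in the interior of $\phi(A_{c})$, and $P(0)=\delta(\exp(H_{0}),\exp(-H_{0}))\neq 0$ since the barycenter $H_{0}$ lies in the interior of $\mathfrak{b}$, so $C_{0}$ is well defined. I would then argue the two containments separately. For $\phi(A_{c})\subseteq \overline{C_{0}}$, observe that $\phi\circ\exp$ is continuous on the connected set $\mathfrak{b}$, hence $\phi(\exp(\mathfrak{b}))$ is a connected subset of $\bbR^{n}$; since $\delta(\exp(H),\exp(-H))=P(\phi(\exp(H)))$ vanishes exactly on $\partial\mathfrak{b}$ and is non-zero on the interior, the image of $\mathrm{int}(\mathfrak{b})$ is a connected subset of $\Omega$ containing $0$, so it is contained in $C_{0}$; taking closures gives $\phi(A_{c})=\phi(\exp(\mathfrak{b}))\subseteq\overline{C_{0}}$.

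For the reverse inclusion, $\phi(A_{c})$ is a compact subset of $\bbR^{n}$ (as the continuous image of the compact torus $A_{c}$) whose boundary is contained in $\{P=0\}$ by the argument above. Hence $\bbR^{n}\setminus\phi(A_{c})$ is open and its intersection with $\Omega$ is a union of connected components of $\Omega$; since $\mathrm{int}(\phi(A_{c}))$ is open and contains $0$, it must contain $C_{0}$. Taking closures gives $\overline{C_{0}}\subseteq\phi(A_{c})$, which yields the claimed equality.

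The main obstacle I anticipate is the final step: one must rule out the possibility that $C_{0}$ is a proper subset of $\mathrm{int}(\phi(A_{c}))$, i.e.\ that the image of the alcove under $\phi\circ\exp$ wraps around a zero of $P$ and contains extra components of $\Omega$. This should be controlled by the fact that $\phi\circ\exp$ is injective on $\mathrm{int}(\mathfrak{b})$ (since $\mathfrak{b}$ is a fundamental domain for $W\ltimes 2\pi\Lambda_{Q^{\vee}}$ acting on $\laa_{c}$, and the zonal spherical functions separate $W$-orbits on $A_{c}$ modulo the action of $M_{c}\cap A_{c}=\{\pm(I,I)\}$, which is already accounted for in step~(4)). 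Granting this, $\phi(\mathrm{int}(\exp(\mathfrak{b})))$ is an open connected subset of $\Omega$ with the same dimension as $\bbR^{n}$, hence coincides with $C_{0}$.
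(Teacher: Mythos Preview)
Your proposal follows the same approach as the paper, which simply summarizes the preceding steps (1)--(4) together with Lemmas~\ref{lem:constantintAccorrect}, \ref{lem:barycenter}, \ref{lem:determinationP}; the integral identity is handled correctly.

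For the geometric statement, your two-containment argument is the right skeleton, but the logic is slightly mis-ordered. In the reverse inclusion you invoke ``$\partial\phi(A_c)\subseteq\{P=0\}$ by the argument above,'' yet what was shown above is only that $P\bigl(\phi(\exp H)\bigr)=\delta(\exp H,\exp(-H))$ is nonzero for $H\in\mathrm{int}(\mathfrak b)$. To conclude that boundary points of $\phi(A_c)$ all come from $\partial\mathfrak b$ you need that $\phi\circ\exp$ is \emph{open} on $\mathrm{int}(\mathfrak b)$; this is exactly what your final paragraph supplies, but it should be placed before, not after, the reverse-inclusion argument. The cleanest justification already sits in the paper: Lemma~\ref{lemma: Jacobian} says the Jacobian $j(\exp H)$ is, up to a constant, $\prod_{\alpha\in\Sigma^+}(e^{\alpha(H)}-e^{-\alpha(H)})$, which is nonvanishing on $\mathrm{int}(\mathfrak b)$, so $\phi\circ\exp$ is a local diffeomorphism there. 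Your alternative via injectivity plus invariance of domain also works.

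Once $\partial\phi(A_c)\subseteq\{P=0\}$ is in place, your two containments $\phi(A_c)\subseteq\overline{C_0}$ and $C_0\subseteq\phi(A_c)$ already give $\phi(A_c)=\overline{C_0}$ directly (since $\phi(A_c)$ is closed). The ``main obstacle'' you flag at the end---that $C_0$ might be a proper subset of $\mathrm{int}(\phi(A_c))$---is therefore not needed for the theorem as stated; the statement only asserts $\phi(A_c)=\overline{C_0}$, not that $\mathrm{int}(\phi(A_c))=C_0$.
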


\begin{figure}[t]
\centering
\begin{overpic}[width=.35\textwidth]{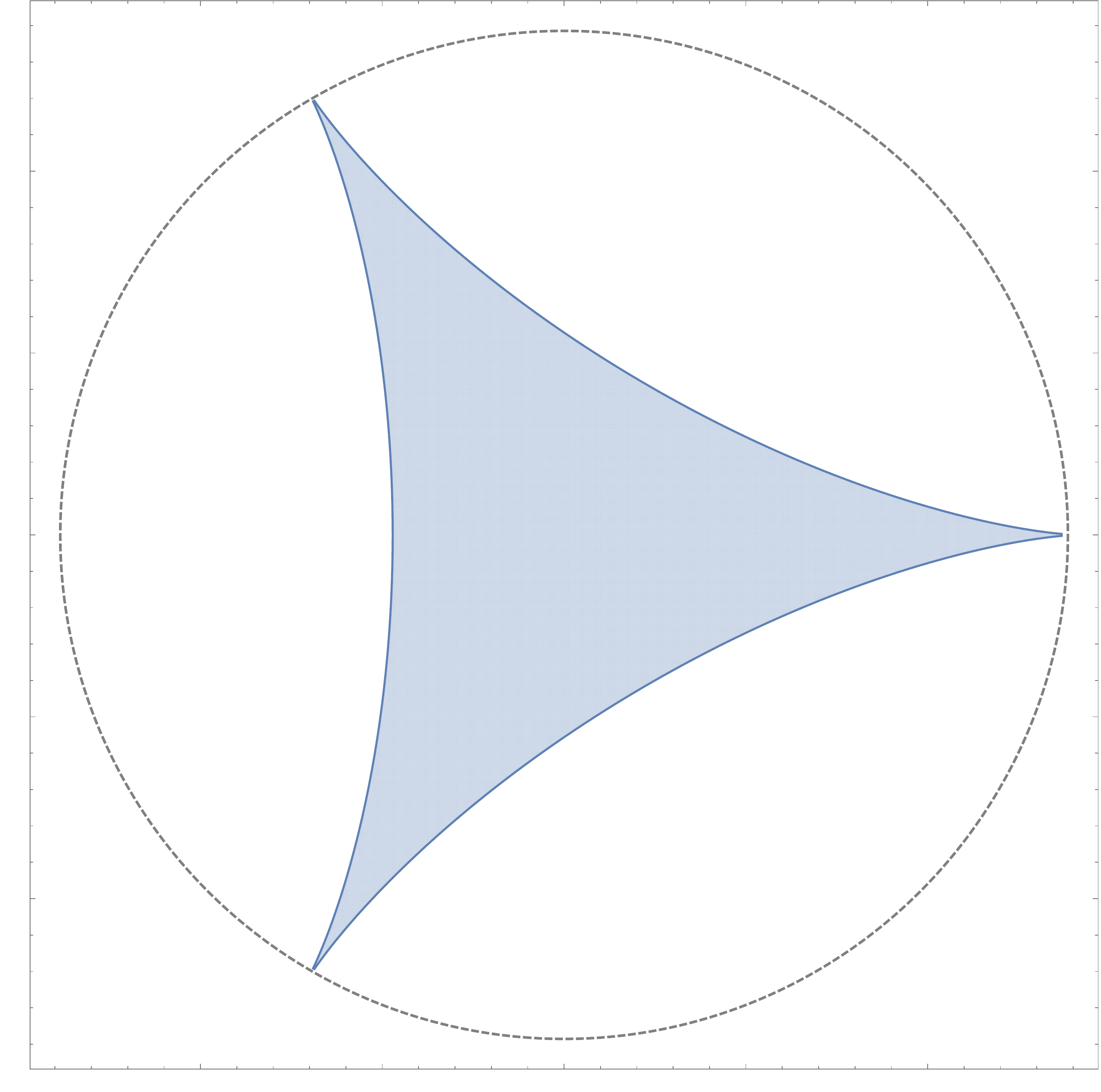}
\end{overpic}
\hspace{0.3cm}
\begin{overpic}[width=.55\textwidth]{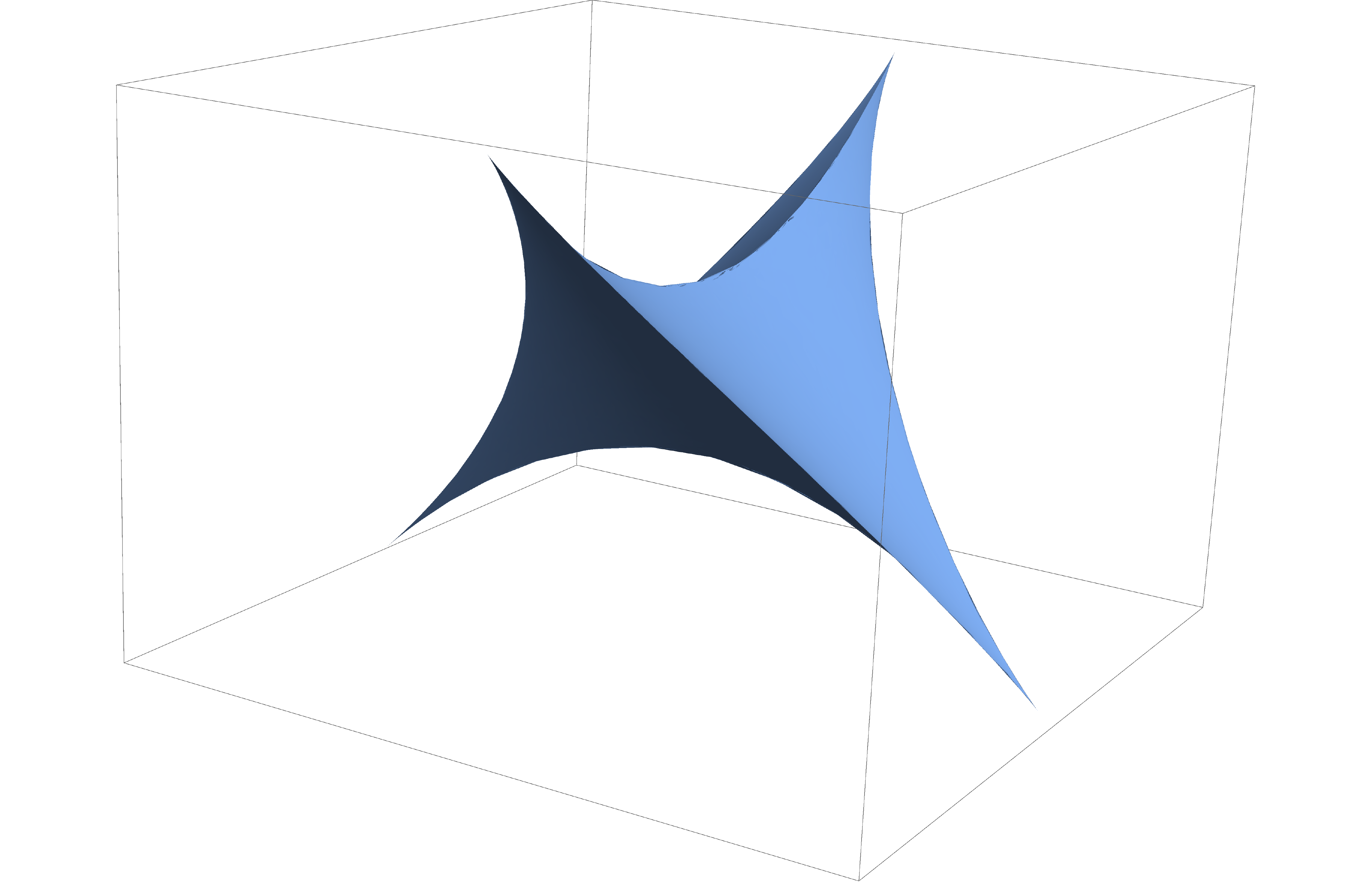}
\end{overpic}
\caption{The figure on the left corresponds to the orthogonality region for the case $n=2$. This is the area enclosed by Steiner's hypocycloid, 
which is given by an algebraic curve of fourth degree \eqref{eq: weight (2,1)}. The figure on the right is the three-dimensional region of orthogonality 
for $n=3$ which is determined by the algebraic equation of degree six \eqref{eq: weight (3,1)}.}
\label{fig:2d}
\end{figure}


\section{Inverting the branching rule}\label{sec:invertingbranchingrule}

The aim of this section is to calculate the set $P^{+}_{G}(k\omega_{1})$ for $k\in\bbN_{0}$, i.e.~the set of irreducible $G$-representations 
$\pi^{G}_{\lambda}$ such that $[\pi^{G}_{\lambda}|_{H}:\pi^{H}_{k\omega_{1}}]=1$. 
The pair $(G,H)$ is a spherical pair, meaning that a Borel subgroup of $G$ has an open orbit on the quotient $G/H$. 
The open orbit corresponds to the open Bruhat cell via the isomorphism $G/H \cong\SL(n+1,\bbC)$ which is induced from the map $G\to\SL(n+1,\bbC)$ 
$(g_1,g_2)\mapsto g_1g_2^{-1}$. 
In particular, this shows that by \cite[Thm.~25.1]{Timashev} the trivial representation occurs with multiplicity at most $1$ in 
$\pi^G_\lambda\vert_H$. 

Let $P\subset H$ denote the parabolic subgroup that contains the Borel subgroup of $H$ of upper triangular matrices and whose Levi subgroup 
has simple roots given by $\{\alpha_{2},\ldots, \alpha_{n}\}$. The fundamental weight $\omega_{1}$ extends to a character of $P$, and so does $k\omega_{1}$.  
%
%
Let $L\to G/P$ be a $G$-equivariant line bundle. Its space of global sections is a $G$-module. 
One can show that all such modules decompose multiplicity free into irreducible $G$-modules if and only if $P\subset G$ is a spherical subgroup, 
see e.g.~\cite[Thm.~25.1]{Timashev}. It turns out that for this choice of parabolic subgroup $P\subset G$ the pair $(G,P)$ is still spherical. 
The parabolic subgroup associated to $\{\alpha_{1},\ldots,\alpha_{n-1}\}$ also has this property, 
but there are essentially no other parabolic subgroups for which this holds, see \cite[\S6]{He et al}.

We explain how to describe the decomposition of the spaces of sections of all such associated line bundles at once.

\begin{definition}\label{def:extendedweightsemigroup}
Let $G'$ be a connected simply connected reductive group and let $G''\subset G'$ be a spherical subgroup, i.e.~the quotient $G'/G''$ 
admits an open orbit for the action of a Borel subgroup $B'\subset G'$. Let $T'\subset B'$ be a maximal torus. 
Denote by $X^{+}(T')$ the semi-group of positive characters of $T'$ with respect to $B'$ and by $X(G'')$ the group of characters of $G''$. 
For $\lambda \in X^{+}(T')$ and $\mu\in X(G'')$ put 
$$
\bbC[G']^{(B'\times G'')}_{(\lambda,\mu)} = \{f\colon G'\to\bbC\mid \forall(b,g,h)\in B'\times G'\times G'':\,f(b^{-1}gh)=\lambda(b)f(g)\mu(h) \}
$$
and define
$$
\widehat{\Lambda}_{+}(G',G'')=\{(\lambda,\mu)\in X^{+}(T')\times X(G'') \mid \bbC[G']^{(B'\times G'')}_{(\lambda,\mu)}=\bbC\},
$$
which is called the extended weight semi-group of the pair $(G',G'')$.  
\end{definition} 

Definition \ref{def:extendedweightsemigroup} follows \cite[Def.~1]{AvdeevGorfinkel}, since we have moreover assumed that $(G',G'')$ is 
a spherical pair, so that the dimension of $\bbC[G']^{(B'\times G'')}_{(\lambda,\mu)}$ is at most $1$, see \cite[Thm.~25.1]{Timashev}.
One can show that $\widehat{\Lambda}_{+}(G',G'')$ is a semi-group and moreover that it is freely generated, the generators corresponding to the set of 
$B'$-stable prime divisors on $G'/G''$, see \cite[Thm.~2]{AvdeevGorfinkel}.

Observe that $(\lambda^{*},k\omega_{1})\in\widehat{\Lambda}_{+}(G,P)$ if and only if 
$[\pi^{G}_{\lambda}|_{P}: k\omega_1]=1$, see \cite[\S 1.2]{AvdeevGorfinkel}, and this happens if and only 
if $[\pi^{G}_{\lambda}|_{H}: \pi^H_{k\omega_1}]=1$. 
Hence $P^{+}_{G}(k\omega_{1})$ consists of elements $\lambda\in P^{+}_{G}$ such that $(\lambda^{*},k\omega_{1})\in\widehat{\Lambda}_{+}(G,P)$. 
We calculate $\widehat{\Lambda}_{+}(G,P)$ in Lemma \ref{lemma: extended weight semi group}.

In this subsection we use a different choice of positive roots for $G$, namely the one that corresponds to the Borel subgroup $B\times B$, where $B\subset\SL(n+1,\bbC)$ consists of upper triangular matrices.

This new choice of positivity is related to our earlier choice by applying the longest Weyl group element of the second factor to the second component. The fundamental weights are now given by $(\omega_{i},0), (0,\omega_{j})$ and the fundamental spherical weights are given by $\eta_{i}=(\omega_{i},\omega_{n+1-i})$.  Furthermore we employ the convention $\omega_{0}=\omega_{n+1}=0$.

\begin{lemma}\label{lemma: extended weight semi group}
The extended weight semi-group $\widehat{\Lambda}_{+}(G,P)$ is generated by
\begin{equation}\label{eqn: generators}
((\omega_{i},\omega_{n+1-i})^{*},0),\quad i=1,\ldots,n\quad\mbox{and }((\omega_{i},\omega_{n+2-i})^{*},\omega_{1}),\quad i=1,\ldots,n+1.
\end{equation}
\end{lemma}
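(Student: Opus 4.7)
The plan is to invoke the Avdeev--Gorfinkel theorem \cite[Thm.~2]{AvdeevGorfinkel}, which asserts that $\widehat{\Lambda}_{+}(G,P)$ is a free semi-group whose generators are in bijection with the $B$-stable prime divisors on $G/P$. The proof then splits into two halves: first, verify that each of the $2n+1$ listed elements belongs to $\widehat{\Lambda}_{+}(G,P)$; second, exhibit $2n+1$ distinct $B$-stable prime divisors on $G/P$, thereby forcing the list to be the free generating set.

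For the first half, one uses the isomorphism $V_{(\lambda_1,\lambda_2)}^{G}\vert_{H}\cong V_{\lambda_1}^{H}\otimes V_{\lambda_2}^{H}$ of $H$-modules, together with $\bigwedge^{j}\bbC^{n+1}\cong(\bigwedge^{n+1-j}\bbC^{n+1})^{*}$ via $\bigwedge^{n+1}\bbC^{n+1}\cong\bbC$. The type-1 elements $((\omega_i,\omega_{n+1-i})^{*},0)$ correspond to the pairing $\bigwedge^{i}V\otimes\bigwedge^{n+1-i}V\to\bigwedge^{n+1}V\cong\bbC$, which exhibits the trivial $H$-summand. For the type-2 elements $((\omega_i,\omega_{n+2-i})^{*},\omega_1)$, a Littlewood--Richardson computation decomposes $\bigwedge^{i}V\otimes\bigwedge^{n+2-i}V$ as $\bigoplus_{k=1}^{\min(i,n+2-i)} V_{(2^{k},1^{n+2-2k})}$; only the $k=1$ summand, whose partition $(2,1^{n})$ has length $n+1$ and reduces modulo a column of height $n+1$ to the fundamental weight $\omega_{1}$, contains a copy of $V_{\omega_1}^{H}$, and with multiplicity one as dictated by sphericity.

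For the second half, the isomorphism $G/P\cong\SL(n+1)\times\bbP^{n}$, $(g_1,g_2)P\mapsto(g_1g_2^{-1},g_2\cdot[e_1])$, conjugates the left $B\times B$-action into $(b_1,b_2)\cdot(g,[v])=(b_1gb_2^{-1},b_2[v])$. Analysing codimension-one $B$-orbits on this variety yields $2n+1$ prime divisors: $n$ of them arise from Bruhat-type divisors in the $\SL(n+1)$-factor (under the twisted $B_1\times B_2$-action), and the remaining $n+1$ from incidence conditions relating the flag determined by $g$ to the line $[v]$. For each divisor, a defining $B$-semi-invariant can be written as a minor-type expression, and its $B\times P$-weight reads off as one of the listed pairs, matched by dimension count.

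The main obstacle is the second half: while the Bruhat divisors in $\SL(n+1)$ are classical and their semi-invariants are principal minors giving precisely the type-1 weights $(\omega_{i},\omega_{n+1-i})^{*}$, the type-2 divisors mix the two factors of $\SL(n+1)\times\bbP^{n}$ and require semi-invariants that couple them; tracking the $\omega_1$-character on $P$ together with the matching $G$-weight $(\omega_i,\omega_{n+2-i})^{*}$ as $i$ ranges over $1,\ldots,n+1$ demands careful bookkeeping of the rank conditions cut out by each divisor.
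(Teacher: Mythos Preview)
Your overall strategy---invoke \cite[Thm.~2]{AvdeevGorfinkel} and relate the listed elements to the $B$-stable prime divisors on $G/P$---is the same as the paper's. The verification that each listed element lies in $\widehat{\Lambda}_{+}(G,P)$ is also handled in the same spirit; your Littlewood--Richardson computation for the type-2 elements is exactly what the paper alludes to but chooses to defer to the explicit embedding constructed later in Corollary~\ref{corollary: concrete embedding}.

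The difference is in the execution of the second half. You propose to classify all $B$-stable prime divisors on $G/P\cong\SL(n+1)\times\bbP^{n}$ and compute the $(B\times P)$-weight of a defining equation for each one, matching them to the list. The paper sidesteps this bookkeeping entirely: it first observes that the $2n+1$ listed elements are indecomposable and linearly independent (immediate by inspection of the weights), so that it suffices merely to bound the rank of $\widehat{\Lambda}_{+}(G,P)$ from above by $2n+1$. For that bound, the paper uses the fibration $G/P\to G/H$ with fibre $H/P\cong\bbP^{n}$: a $B\times B$-stable prime divisor either lies over one of the $n$ Bruhat divisors in $G/H\cong\SL(n+1)$, or maps dominantly onto $G/H$ and hence meets the fibre in one of the $n+1$ torus-stable coordinate hyperplanes of $\bbP^{n}$. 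This yields at most $2n+1$ divisors without computing a single semi-invariant weight, and thus avoids precisely the ``careful bookkeeping of the rank conditions'' that you correctly identify as the main obstacle in your approach. Your route is sound, but the paper's indecomposability-plus-counting argument is the shortcut that makes the proof short.
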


\begin{proof}
The elements $((\omega_{i},\omega_{n+1-i})^{*},0), i=1,\ldots,n$ correspond to spherical representations and are thus contained in $\widehat{\Lambda}_{+}(G,P)$. To show that the elements $((\omega_{i},\omega_{n+2-i})^{*},\omega_{1}),i=1,\ldots,n+1$ are contained in $\widehat{\Lambda}_{+}(G,P)$ we have to show that the irreducible $G$-representation $V^{G}_{(\omega_{i},\omega_{n+2-i})}$ contains $V^{H}_{\omega_{1}}$ upon restriction to the diagonal subgroup $H$. This can be done by means of the Littlewood-Richardson rule, see e.g.~\cite[\S9.3.5]{Goodman and Wallach}. Instead of giving this argument we refer to Corollary \ref{corollary: concrete embedding} where we calculate the corresponding embeddings.

The elements in (\ref{eqn: generators}) are indecomposable and linearly independent. To prove the result it suffices to show that the rank of $\widehat{\Lambda}_{+}(G,P)$ is at most $2n+1$.

Consider the fibration $G/P\to G/H$. On $G/H$ the number of $B\times B$-stable prime divisors is $n$, which follows for example from the Bruhat decomposition. The pull-back of each of these divisors gives a $B\times B$-stable prime divisor on $G/P$. The other $B\times B$-stable prime divisors in $G/P$ map dominantly onto $G/H$. This means that these divisors intersect the fiber $H/P$ in a $B_{M}$-stable prime divisor where $B_{M}=(B\times B)\cap H\subset M\cong(\bbC^{\times})^{n}$ is a torus that acts naturally on $H/P\cong \bbP^{n}(\bbC)$. There are $n+1$ prime divisors in $H/P$ that are stable under $M$, namely the hyperplanes $\{(z_{0}:\ldots:z_{n})\in\bbP^{n}(\bbC) \mid z_{i}=0\}$ for $i=0,\ldots,n$. This shows that there are at most $2n+1$ different $B$-stable prime divisors in $G/P$, as desired.
\end{proof}

\begin{corollary}\label{cor: well is product}
Fix $k\in\bbN_{0}$ and set $B(k\omega_{1})=\{(\sum_{i=1}^{n+1}k_{i}(\omega_{i},\omega_{n+2-i}):\sum_{i=1}^{n+1}k_{i}=k\}$. Then $P^{+}_{G}(k\omega_{1})=B(k\omega_{1})+P_{G}^{+}(0)$.
\end{corollary}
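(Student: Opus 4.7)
The plan is to read the corollary off directly from Lemma \ref{lemma: extended weight semi group}, combined with two ingredients already in place: the translation $\lambda\in P^{+}_{G}(k\omega_{1}) \Longleftrightarrow (\lambda^{*},k\omega_{1})\in\widehat{\Lambda}_{+}(G,P)$ recorded just before the lemma, and the free generation of the extended weight semigroup by \cite[Thm.~2]{AvdeevGorfinkel}. The whole argument is essentially bookkeeping against the explicit list of generators in \eqref{eqn: generators}; no new representation-theoretic input is required.

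First I would handle the easy inclusion $B(k\omega_{1})+P^{+}_{G}(0)\subseteq P^{+}_{G}(k\omega_{1})$. Given $\nu=\sum_{i=1}^{n+1}k_{i}(\omega_{i},\omega_{n+2-i})$ with $\sum k_{i}=k$ and $\sigma=\sum_{i=1}^{n}a_{i}(\omega_{i},\omega_{n+1-i})\in P^{+}_{G}(0)$, dualize and add the corresponding generators from \eqref{eqn: generators} inside $\widehat{\Lambda}_{+}(G,P)$ to produce $((\nu+\sigma)^{*},k\omega_{1})\in\widehat{\Lambda}_{+}(G,P)$, which translates back to $\nu+\sigma\in P^{+}_{G}(k\omega_{1})$.

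For the reverse inclusion, take $\lambda\in P^{+}_{G}(k\omega_{1})$, so that $(\lambda^{*},k\omega_{1})\in\widehat{\Lambda}_{+}(G,P)$. By the free generation of this semigroup combined with Lemma \ref{lemma: extended weight semi group}, there are unique $a_{i},k_{i}\in\bbN_{0}$ with
\[
(\lambda^{*},k\omega_{1})=\sum_{i=1}^{n}a_{i}\bigl((\omega_{i},\omega_{n+1-i})^{*},0\bigr)+\sum_{i=1}^{n+1}k_{i}\bigl((\omega_{i},\omega_{n+2-i})^{*},\omega_{1}\bigr).
\]
Projecting onto the second coordinate (which takes values in $\bbZ\omega_{1}\subset X(P)$) forces $\sum_{i=1}^{n+1}k_{i}=k$, while reading off the first coordinate and applying the involution $(\cdot)^{*}$ yields
\[
\lambda=\sum_{i=1}^{n}a_{i}(\omega_{i},\omega_{n+1-i})+\sum_{i=1}^{n+1}k_{i}(\omega_{i},\omega_{n+2-i}),
\]
which exhibits $\lambda$ as the sum of an element of $P^{+}_{G}(0)$ and an element of $B(k\omega_{1})$.

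The main ``obstacle'' is really already absorbed into the proof of Lemma \ref{lemma: extended weight semi group}: once the generating set of $\widehat{\Lambda}_{+}(G,P)$ has been identified and its rank pinned down, the corollary is purely combinatorial. A small subtlety worth flagging is that the uniqueness of the decomposition $\lambda=\sigma+\nu$, which will later be needed to invoke Condition \ref{cond:structurePbottom}, is automatic from the free generation and requires no further argument.
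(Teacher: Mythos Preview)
Your proof is correct and follows essentially the same route as the paper: both use the equivalence $\lambda\in P^{+}_{G}(k\omega_{1})\Longleftrightarrow(\lambda^{*},k\omega_{1})\in\widehat{\Lambda}_{+}(G,P)$, then read off the decomposition from the free generation of $\widehat{\Lambda}_{+}(G,P)$ by the explicit list in Lemma~\ref{lemma: extended weight semi group}. The paper's version is terser (it states the equivalence as a single biconditional rather than splitting into two inclusions), but the logical content is identical.
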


\begin{proof}
Note that $\lambda\in P^{+}_{G}(k\omega_{1})$ if and only if $(\lambda^{*},k\omega_{1})\in\widehat{\Lambda}_{+}(G,P)$, which is in turn equivalent to
$$\lambda=\sum_{i=1}^{n+1}k_{i}(\omega_{i},\omega_{n+2-i})+\sum_{j=1}^{n}d_{j}(\omega_{i},\omega_{n+1-i}),\quad\mbox{with }\sum_{i=1}^{n+1}k_{i}=k.$$
This settles the claim.
\end{proof}

We proceed to check how $P^{+}_{G}(k\omega_{1})$ behaves with respect to the tensor product. Define $\beta_{i}=(\omega_{i}-\omega_{i+1},\omega_{n+2-i}-\omega_{n+1-i})$.
Then $B(k\omega_{1})$ is contained in the affine plane that is parallel to $\mathrm{span}(\beta_{1},\ldots,\beta_{n})$.
Recall that the fundamental spherical weights with respect to the Borel subgroup $B\times B$ are given by $\eta_{i}=(\omega_{i},\omega_{n+1-i})$. A basis of 
$\lat_{G}^{*}$ is given by $(\beta_{1},\ldots,\beta_{n},\eta_{1},\eta_{n})$.
Observe that 
\begin{itemize}
\item $(\alpha_{1},0)=\beta_{1}+\eta_{1}$,
\item $(\alpha_{i},0)=\beta_{i}+\eta_{i}-\eta_{i-1}$, for $i=2,\ldots,n$,
\item $(0,\alpha_{i})=-\beta_{i}-\eta_{i}+\eta_{i+1}$, for $i=1,\ldots,n-1$,
\item $(0,\alpha_{1})=-\beta_{n}+\eta_{n}$.
\end{itemize}
Any weight that occurs in the decomposition of the tensor product $V^{G}_{\lambda}\otimes V^{G}_{\eta_{i}}$ is of the form $\lambda+\eta_{i}-\sum_{\alpha>0}(n_{(\alpha,0)}(\alpha,0)+n_{(0,\alpha)}(0,\alpha))$ for some coefficients $n_{(\alpha,0)},n_{(0,\alpha)}\in\bbN_{0}$ and is hence of degree $\le|\lambda|+1$.

The dominant weight $(\omega_{i},\omega_{n+2-i})$ corresponds to the dominant weight $(\omega_{i},-\omega_{i-1})$ with respect to the Borel subgroup $B\times B^{-}$, where $B^{-}$ is opposite to $B$. Restricting this dominant weight to $\lat_{M}$ gives $\frac{1}{2}(\omega_{i}-\omega_{i-1},\omega_{i}-\omega_{i-1})$. This element corresponds to the weight vector $\omega_{i}-\omega_{i-1}$ on $\lat$. The map
$$B(k\omega_{1})\to P^{+}_{M}(k\omega_{1}):\sum_{i=1}^{n+1}k_{i}(\omega_{i},\omega_{n+2-i})\mapsto \sum_{i=1}^{n+1}k_{i}(\omega_{i}-\omega_{i-1})$$
is surjective, which is a general feature for multiplicity free systems, see e.g.~\cite[Thm.3.1]{mvp-MVMVOP}. To see that it is injective, we have to understand the branching $\pi^{H}_{k\omega_{1}}|_{T_{H}}$. The weight vectors are just the monomials $\prod_{i=1}^{n+1}e_{i}^{k_{i}}$ and their weights are $\sum_{i=1}^{n}(k_{i}-k_{i+1})\omega_{i}=\sum_{i=1}^{n+1}k_{i}(\omega_{i}-\omega_{i-1})$.
We observe that projection along the spherical directions $\eta_{1},\ldots,\eta_{n}$ provides a bijection $B(k\omega_{1})\to P^{+}_{M}(k\omega_{1})$.

We have shown that Conditions \ref{cond:multfree}, \ref{cond:structurePbottom} and \ref{cond:tensor} are satisfied.

\begin{remark}\label{rk: Weyl group action}
The Weyl group $W(\Sigma)=S_{n+1}$ acts transitively on $P^{+}_{M}(\omega_{1})$. Indeed, the standard basis of $V$ consists of $T$-weight vectors $e_{1},\ldots,e_{n+1}$ and $T$ acts with the characters $\xi_{i}:T\to\bbC^{\times}:t\mapsto t_{i}$. We have $w(\xi_{i})(t)=\xi_{i}(w^{-1}t)=t_{w(i)}=\xi_{w(i)}(t)$, which shows that the action of $W(\Sigma)$ on $P^{+}_{T}(\omega_{1})$ is basically the same as the action of $S_{n+1}$ on the set $\{1,\ldots,n+1\}$ and is thus transitive.
\end{remark}



\section{The matrix weight}\label{sec:matrixweightinpart2}

\subsection{Some representations}

We discuss some representations of $G$ and $H$ that are needed to calculate the spherical functions of degree zero. Note that $V^{H}_{k\omega_{1}}=S^{k}(V)$, the $k$-th symmetric power $V$. We identify $V=\bbC^{n+1}$ with its standard basis $(e_{1},\ldots,e_{n+1})$. 
A basis of $S^{k}(V)$ is given by the monomials $e^{\tau}=e_{1}^{\tau_{i}}\cdots e_{n+1}^{\tau_{n+1}}$, where $\tau\in\bbN_{0}^{n+1}$ is a composition of $k$ in at most $n+1$ parts, i.e.~$\sum_{i=1}^{n+1}\tau_{i}=k$. For such a composition we introduce the binomial $\binom{k}{\tau}=k!/(\tau_{1}!\cdots\tau_{n+1}!)$. We identify $P^{+}_{M}(k\omega_{1})$ with the set of compositions $\tau\in\bbN_{0}^{n+1}$ of $k$. The element in $P^{+}_{G}(k\omega_{1})$ whose projection onto $B(\mu)$ along the spherical directions is $\sigma$ is denoted by $\lambda(d,\sigma)$, where $d\in\bbN_{0}^{n}$ is the degree. More precisely $\lambda(\sigma,d)=\sigma+\sum_{i=1}^{n}d_{j}(\omega_{i},\omega_{n+1-i})$ following Corollary \ref{cor: well is product}.

\begin{lemma}\label{lemma: inner product on symmetric power}
The inner product on $V^{H}_{k\omega_{1}}=S^{k}(V)$ with $||e_{\sigma}||^{2}=\binom{k}{\sigma}^{-1}$ is $H_{c}$ invariant.
\end{lemma}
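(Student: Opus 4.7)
The plan is to realize $S^{k}(V)=V^{H}_{k\omega_{1}}$ as the subspace of symmetric tensors inside $V^{\otimes k}$, transport the manifestly $H_{c}$-invariant tensor-product inner product back to $S^{k}(V)$, and then verify that this pulled-back inner product reproduces the normalization $\|e^{\sigma}\|^{2}=\binom{k}{\sigma}^{-1}$.

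First I would equip $V=\bbC^{n+1}$ with its standard Hermitian inner product, for which $(e_{i},e_{j})=\delta_{i,j}$; this is $H_{c}=\diag\,\SU(n+1)$-invariant by definition. Taking the $k$-fold tensor product yields an $H_{c}$-invariant inner product on $V^{\otimes k}$ for which the monomials $e_{i_{1}}\otimes\cdots\otimes e_{i_{k}}$ form an orthonormal basis. The subspace $\mathrm{Sym}^{k}(V)\subset V^{\otimes k}$ of symmetric tensors is $H_{c}$-stable, and in characteristic zero the restriction of the quotient $V^{\otimes k}\twoheadrightarrow S^{k}(V)$ gives an $H_{c}$-equivariant isomorphism $\mathrm{Sym}^{k}(V)\xrightarrow{\cong} S^{k}(V)$, with inverse the symmetrization map
\[
s\colon S^{k}(V)\to V^{\otimes k},\qquad s(v_{1}\cdots v_{k})=\tfrac{1}{k!}\sum_{\pi\in S_{k}} v_{\pi(1)}\otimes\cdots\otimes v_{\pi(k)}.
\]
Pulling back along $s$ therefore produces an $H_{c}$-invariant Hermitian inner product on $S^{k}(V)$, and it remains to identify it with the one in the statement.

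For the normalization, fix a composition $\sigma\in\bbN_{0}^{n+1}$ of $k$ and choose any $k$-tuple $(i_{1},\ldots,i_{k})$ in which the value $j$ occurs $\sigma_{j}$ times. The stabilizer of this tuple in $S_{k}$ has order $\sigma_{1}!\cdots\sigma_{n+1}!$, so the sum $s(e^{\sigma})$ involves exactly $\binom{k}{\sigma}$ distinct orthonormal basis vectors of $V^{\otimes k}$, each occurring with multiplicity $\sigma_{1}!\cdots\sigma_{n+1}!$. A short direct computation then gives
\[
\|e^{\sigma}\|^{2}=\frac{1}{(k!)^{2}}\cdot\binom{k}{\sigma}\cdot(\sigma_{1}!\cdots\sigma_{n+1}!)^{2}=\binom{k}{\sigma}^{-1}.
\]

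I do not expect any serious obstacle: the only slightly delicate point is the combinatorial accounting of the stabilizer, but this is entirely routine. As a sanity check one could alternatively invoke Schur's lemma, which implies that any two $H_{c}$-invariant inner products on the irreducible representation $S^{k}(V)$ differ by a positive scalar, and then verify the normalization on the single highest-weight vector $e_{1}^{k}$; the tensor-power construction above, however, exhibits the invariance and the normalization simultaneously.
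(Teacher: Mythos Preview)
Your proof is correct and takes essentially the same approach as the paper: both pull back the natural $H_c$-invariant inner product from $V^{\otimes k}$ via the symmetrization embedding $S^k(V)\hookrightarrow V^{\otimes k}$ and then compute the norm of $e^{\sigma}$ by counting stabilizer orders. In fact your map $s$ and the paper's map $\iota$ (defined by $\binom{k}{\sigma}e_\sigma\mapsto\sum_{w\in S_k^{I_\sigma}}e_{w(1)}\otimes\cdots\otimes e_{w(k)}$) coincide, since averaging over all of $S_k$ with the factor $1/k!$ is the same as summing over coset representatives with the factor $1/\binom{k}{\sigma}$.
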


\begin{proof}
Consider the $H$-equivariant embedding $\iota:S^{k}(V)\to V^{\otimes k}:\binom{k}{\sigma}e_{\sigma}\mapsto\sum_{w\in S_{k}^{I_{\sigma}}}e_{w(1)}\otimes\cdots\otimes e_{w(n+1)}$, 
where $S_{k}^{I_{\sigma}}$ denotes the set of unique representatives of smallest length of the cosets $S_{\tau_{i}}/(S_{s_{i}}\times\cdots\times S_{s_{n+1}})$. The latter has a natural $H_{c}$-invariant Hermitian inner product. We stipulate that $\iota$ is isometric, which implies $\binom{k}{\sigma}^{2}||e_{\sigma}||^{2}=\binom{k}{\sigma}$ and the result follows.
\end{proof}

We refer to this inner product on $S^{k}(V)$ as the standard inner product. The inner product on $\bigotimes_{i=1}^{n+1}S^{\tau_{i}}(V)$ that is given by the product of the inner products is also referred to as the standard inner product. Define
$$M(\tau,\rho)=\left\{(s^{1},\ldots,s^{n+1})\in\left(\bbN_{0}^{n+1}\right)^{n+1}\left|\forall p:\,
\sum_{q=1}^{n+1}s^{p}_{q}=\tau_{p},\forall q:\,\sum_{p=1}^{n+1}s^{p}_{q}=\rho_{q}
\right.\right\}.$$
An element of $M(\tau,\rho)$ is denoted by $(s)$, it is really an $(n+1)\times(n+1)$-matrix whose entries of the $p$-th column and $q$-th row add up to $\tau_{p}$ and $\rho_{q}$ respectively.

\begin{lemma}\label{lemma: CG embedding} A composition $\tau$ gives rise to an isometric $H$-equivariant embedding
$$
i_{\tau}:S^{k}(V)\to\bigotimes_{i=1}^{n+1}S^{\tau_{i}}(V):e_{\rho}\mapsto\binom{k}{\rho}^{-1}\sum_{(s)\in M(\tau,\rho)}\left(
\binom{\tau_{1}}{s^{1}}e^{s^{1}}\otimes\cdots\otimes \binom{\tau_{n+1}}{s^{n+1}}e^{s^{n+1}}
\right).$$
\end{lemma}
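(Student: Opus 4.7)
The plan is to realize $i_\tau$ as the restriction to $S^k(V)$ of the canonical identification obtained by regrouping tensor factors, and then match the resulting formula with the one stated in the lemma. Let $\iota_k$ denote the embedding $S^k(V)\hookrightarrow V^{\otimes k}$ of Lemma \ref{lemma: inner product on symmetric power}, and, for each $p$, let $\iota_{\tau_p}$ be the analogous embedding $S^{\tau_p}(V)\hookrightarrow V^{\otimes \tau_p}$ associated with the integer $\tau_p$. Let $\phi \colon V^{\otimes k}\to V^{\otimes \tau_1}\otimes\cdots\otimes V^{\otimes \tau_{n+1}}$ be the $H$-linear isomorphism that groups consecutive tensor factors into blocks of sizes $\tau_1,\ldots,\tau_{n+1}$.

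I would first check that $\phi\circ\iota_k$ takes values in the image of $\iota_{\tau_1}\otimes\cdots\otimes\iota_{\tau_{n+1}}$. This is immediate because tensors in $\iota_k(S^k(V))$ are $S_k$-symmetric, hence in particular symmetric under each block-subgroup $S_{\tau_p}$, so they lie in $\bigotimes_p S^{\tau_p}(V)$ viewed inside $\bigotimes_p V^{\otimes \tau_p}$. This gives a well-defined map $i_\tau\colon S^k(V)\to\bigotimes_p S^{\tau_p}(V)$. Equivariance is automatic since each constituent map is $H$-equivariant. For isometry, Lemma \ref{lemma: inner product on symmetric power} gives that $\iota_k$ is isometric by definition; applied to each factor it also gives that $\iota_{\tau_1}\otimes\cdots\otimes\iota_{\tau_{n+1}}$ is an isometric embedding of $\bigotimes_p S^{\tau_p}(V)$ into $V^{\otimes k}$ for the standard inner products on both sides. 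Composing isometries yields an isometry.

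What remains is to recognize the explicit formula of the lemma in $i_\tau(e_\rho)$. Expand $\iota_k(\binom{k}{\rho}e_\rho)=\sum_w e_{w(1)}\otimes\cdots\otimes e_{w(k)}$ and reorganize the sum over distinct arrangements $w$ of the multiset (with $\rho_q$ copies of $e_q$) according to the matrix $(s^p_q)\in M(\tau,\rho)$ recording how many $e_q$'s appear in the $p$-th block. For each fixed $(s)$, within block $p$ the partial sum over the internal arrangements equals $\iota_{\tau_p}(\binom{\tau_p}{s^p}e^{s^p})$ by Lemma \ref{lemma: inner product on symmetric power} applied to $S^{\tau_p}(V)$ with composition $s^p$; the contributions from different blocks multiply independently. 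Pulling out the scalars $\binom{\tau_p}{s^p}$, identifying each $\iota_{\tau_p}(e^{s^p})$ with $e^{s^p}\in S^{\tau_p}(V)$, and dividing through by $\binom{k}{\rho}$ yields exactly the formula stated in the lemma.

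The only nontrivial step is the combinatorial bookkeeping in the last paragraph, and it is cleanly handled once one factorizes an arrangement of the whole multiset into a choice of block-pattern $(s)$ plus independent arrangements within each block. There is no real obstacle; the structural point of the lemma is the compatibility of two levels of symmetrization with the standard inner products, which is why defining $i_\tau$ intrinsically as above makes equivariance and isometry essentially automatic.
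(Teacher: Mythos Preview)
Your proof is correct and takes a genuinely different route from the paper's. The paper proceeds by direct verification: it computes $E_{i}\,i_{\tau}(e_{\rho})$ for each simple root vector $E_{i}$, reorganizes the resulting sum over $M(\tau,\rho)$ into a sum over $M(\tau,\rho(i))$ via an explicit bijection, and thereby checks $E_{i}\,i_{\tau}(e_{\rho})=i_{\tau}(E_{i}e_{\rho})$; isometry is then deduced from the single equality $\|i_{\tau}(e_{1}^{k})\|=1$ together with irreducibility. Your approach instead factors $i_{\tau}$ through the canonical symmetrization embeddings $\iota_{k}$ and $\bigotimes_{p}\iota_{\tau_{p}}$ and the block-regrouping isomorphism, so that $H$-equivariance and isometry are inherited automatically from the constituent maps; the combinatorics then appears only in identifying the formula, via the parametrization of arrangements of the multiset by a choice of block-content matrix $(s)\in M(\tau,\rho)$ together with independent arrangements inside each block. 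Your argument is cleaner conceptually and avoids the root-vector computation entirely; the paper's argument, by contrast, makes the Lie algebra action on the basis vectors explicit, which is closer in spirit to the later calculations in the section (e.g.\ Lemma \ref{lemma: Gamma g_a}).
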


\begin{remark}
(i) Note that $i_{\tau}$ is easily defined on the highest weight vector. However, we need to have all the information of the Lemma \ref{lemma: CG embedding} for later purposes.

(ii) For $n=1$, Lemma \ref{lemma: CG embedding} provides the Clebsch-Gordan coefficients for the embeddings $H^{\ell}\to H^{\ell_{1}}\otimes H^{\ell_{2}}$ with $\ell_{1}+\ell_{2}=\ell$, see e.g.~\cite[Prop.2.1]{Koor1}. 
We have not tried to obtain the general Clebsch-Gordan coefficients since we do not require the explicit knowledge. Moreover, in general this seems to be a hard problem. 

(iii) The isometry property of $i_{\tau}$ gives the generalized Vandermonde summation. 
\end{remark}

\begin{proof}
Let $\alpha_{i}$ be a simple root and consider the root vector $E_{i}\in\lag_{\alpha_{i}}$, which acts on $S^{k}(V)$ by $e_{i}\frac{d}{de_{i+1}}$ by identifying $S^{k}(V)$ with the space of homogeneous polynomials of degree $k$ on $V^{*}$. 
Given a composition $\rho=(\rho_{1},\ldots,\rho_{n+1})$ of $k$, let $\rho(i)$ denote the composition
$$\rho(i)=(\rho_{1},\ldots,\rho_{i}+1,\rho_{i+1}-1,\ldots,\rho_{n+1}).$$
We allow a negative number in the composition, in which case we employ the convention that the binomial for such a composition is zero. 
We use the formula $\rho_{i+1}\binom{k}{\rho}=(\rho_{i}+1)\binom{k}{\rho(i)}$ to derive
\begin{multline}\label{eqn: comb intertwiner}
E_{i}i_{\tau}(e_{\rho})=\\
\binom{k}{\rho}^{-1}\sum_{(s)\in M(\tau,\rho)}\sum_{k=1}^{n+1}\left(
\binom{\tau_{1}}{s^{1}}e^{s^{1}}\otimes\cdots\otimes(s_{i}+1)\binom{\tau_{k}}{s^{k}(i)}\frac{e_{i}e^{s^{k}}}{e_{i+1}}\otimes\cdots\otimes \binom{\tau_{n+1}}{s^{n+1}}e^{s^{n+1}}
\right).
\end{multline}
Observe that we obtain a linear combination of elements of the form $e^{\sigma^{1}}\otimes\ldots\otimes e^{\sigma^{n+1}}$ with $\sigma\in M(\tau,\rho(i))$. Let $s(i,k)=(s^{1},\ldots,s^{k}(i),\ldots,s^{n+1})$ and note that every $\sigma\in M(\tau,\rho(i))$ is of the form $s(i,k)$, for some $k\in\{1,\ldots,n+1\}$ and $s^{k}_{i+1}>0$. Indeed, if $\sigma\in M(\tau,\rho(i))$ and $\sigma^{k}_{i}\ne0$ then we define $s(\sigma,k)\in M(\tau,\rho)$ by $s(\sigma,k)^{\ell}=\sigma^{\ell}$ if $\ell\ne k$ and
$$s(\sigma,k)^{k}=(\sigma^{k}_{1},\ldots,\sigma^{k}_{i}-1,\sigma^{k}_{i+1}+1,\ldots,\sigma^{k}_{n+1}).$$
One checks that $s(\sigma,k)(i,k)=\sigma$. If $\sigma^{k}_{i}=0$ for all $k=1,\ldots,n+1$, then $\sum_{k}\sigma^{k}_{i}=0$, but this sum is also equal to $\rho_{i}+1$, and this contradicts $\rho_{i}\in\bbN_{0}$. We use this observation to rewrite \eqref{eqn: comb intertwiner},
\begin{multline*}
E_{i}i_{\tau}(e_{\rho})=\binom{k}{\rho}^{-1}\sum_{(\sigma)\in M(\tau,\rho(i))}\sum_{k=1}^{n+1}\sigma^{k}_{i}\left(
\binom{\tau_{1}}{\sigma^{1}}e^{\sigma^{1}}\otimes\cdots\otimes \binom{\tau_{n+1}}{\sigma^{n+1}}e^{\sigma^{n+1}}
\right)\\
=\binom{k}{\rho}^{-1}(\rho_{i}+1)\sum_{(\sigma)\in M(\tau,\rho(i))}\left(
\binom{\tau_{1}}{\sigma^{1}}e^{\sigma^{1}}\otimes\cdots\otimes \binom{\tau_{n+1}}{\sigma^{n+1}}e^{\sigma^{n+1}}
\right)=\rho_{i+1}i_{\tau}(E_{i}e_{\rho}),
\end{multline*}
as desired. We have shown that actions of the root vectors of the simple positive roots are intertwined by $i_{\tau}$. In a similar fashion one checks that $i_{\tau}$ intertwines the action of the root vectors of negative roots and of the torus. Finally note that $||i_{\tau}(e_{1}^{k})||=||e_{1}^{\tau_{1}}\otimes\ldots\otimes e^{\tau_{n+1}}_{1}||=1$, which implies that $i_{\tau}$ is an isometry. 
\end{proof}


\subsection{Calculation of $\Phi_{0}^{k\omega_{1}}$}

Let $\mu=k\omega_{1}$. Consider the spherical functions $\{\Phi^{\mu}_{\lambda(0,\sigma)} \mid \sigma\in P^{+}_{M}(\mu)\}$. 
Following the proof of \cite[Lem.~6.1]{mvp-MVMVOP}, $\Phi_{a}^{\mu}=(\Phi^{\mu}_{\lambda(0,\sigma)}(a) \mid \sigma\in P^{+}_{M}(\mu))$ is a basis of 
$\End_{M}(V^{H}_{\mu})$ for $a\in A_{\mu-\reg}$. 
By Schur's Lemma, another basis of $\End_{M}(V^{H}_{\mu})$ is given by $\calF\otimes\calE =(f_{\sigma}\otimes e_{\sigma}\mid \sigma\in P^{+}_{M}(\mu))$, 
where $\calE=(e_{\sigma}\mid \sigma\in P^{+}_{M}(\mu))$ and $\calF=(f_{\sigma} \mid \sigma\in P^{+}_{M}(\mu))$ 
the basis of $(S^{k}(\bbC^{n+1}))^{*}$ dual to $\calE$. The base change yields the full spherical function of degree zero,
$$\Phi_{0}^{\mu}(a)=[\mathrm{I}]^{\Phi_{a}^{\mu}}_{\calF\otimes\calE}=\left(\frac{\langle e_{\sigma},a\cdot e_{\sigma}\rangle_{\lambda(0,\tau)}}{\langle e_{\sigma},e_{\sigma}\rangle_{\lambda(0,\tau)}}\right)_{\sigma,\tau}\in\End(\bbC^{n+1}).$$
This matrix is in general hard to compute. However, for the case $(\SU(2)\times\SU(2),\diag(\SU(2)))$ there exists a remarkable formula found by Koornwinder, \cite[Prop.~3.2]{Koor1}. We found a similar formula for the matrix $\Phi^{\mu}_{0}(a)$, whose formulation and proof occupies the rest of this subsection.

Let $\tau=(\tau_{1},\ldots,\tau_{n+1})\in P^{+}_{M}(\mu)$ and consider the standard $G$-representation $\pi^{G}_{T(\tau)}$ on $T(\tau)=\bigotimes_{i=1}^{n+1}(V^{G}_{\lambda_{i}})^{\otimes\tau_{i}}$. Let $\Gamma=(\gamma_{\tau}|\tau\in P^{+}_{M}(\mu))$ be a collection of $H$-equivariant isometric embeddings $\gamma_{\tau}:V^{H}_{\mu}\to T(\tau)$ and let $\gamma_{\tau}^{*}:T(\tau)\to V^{H}_{\mu}$ denote their adjoint maps. Define
$$\Gamma^{\mu}_{\tau}(a)=\gamma_{\tau}^{*}\circ\pi^{G}_{T(\tau)}(a)\circ\gamma_{\tau}$$
and observe that $\Gamma^{\mu}_{\tau}(a)=\sum_{\lambda'\le\lambda(0,\tau)}c_{\lambda',\gamma_{\tau}}\Phi^{\mu}_{\lambda'}(a)$. Moreover, the coefficients $c_{\lambda',\gamma_{\tau}}$ are non-negative numbers that add up to one. Define $C(\Gamma)\in\End(\bbC^{N})$ by
$$C(\Gamma)_{\sigma,\tau}=c_{\lambda(0,\sigma),\gamma_{\tau}}.$$
Consider the map $\Gamma^{\mu}_{a}:\End_{M}(V^{H}_{\mu})\to\End_{M}(V^{H}_{\mu}):f_{\tau}\otimes e_{\tau}\mapsto\Gamma^{\mu}_{\tau}(a)$. Its matrix with respect to 
the basis $\calF\otimes\calE$ is given by
\begin{eqnarray}\label{eqn: matrices Gamma Phi C}
[\Gamma^{\mu}_{a}]^{\calF\otimes\calE}_{\calF\otimes\calE}=\Phi^{\mu}_{0}(a)\cdot C(\Gamma).
\end{eqnarray}
We proceed to calculate this matrix for a specific collection $\Gamma$.
\begin{definition}
Given $a\in A$, define $g_{a}\in\End(\bbC^{n+1})$ by $(g_{a})_{ij}=\langle a\cdot e_{i},e_{i}\rangle_{\lambda_{j}}$.
\end{definition}
In fact, $g_{a}=\Phi^{\omega_{1}}_{0}(a)\in\End(\bbC^{n+1})$, since the basis $(e_{1},\ldots,e_{n+1})$ is orthonormal with respect to the $H$-invariant inner product on $V^{H}_{\omega_{1}}$. Moreover, $g_{a}$ is invertible for $a\in A_{\mu-\reg}$.

\begin{lemma}\label{lemma: g_a acting}
The matrix of the natural action of $g_{a}$ on $S^{k}(V^{H}_{\omega_{1}})$ is given by
$$\left([g_{a}]^{\calE}_{\calE}\right)_{\rho,\tau}=
\sum_{(s^{1},\ldots,s^{n+1})\in M(\rho,\tau)}\left(
\prod_{i=1}^{n+1}\binom{\tau_{i}}{s^{i}}\prod_{j=1}^{n+1}\langle a\cdot e_{j},e_{j}\rangle_{\lambda_{i}}^{s^{i}_{j}}
\right).$$
\end{lemma}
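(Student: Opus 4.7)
The plan is straightforward combinatorics based on the multinomial theorem applied in the symmetric algebra.

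First I would unpack what $g_a$ does on the standard basis. Since $\calE=(e_1,\ldots,e_{n+1})$ is orthonormal for the $H$-invariant inner product on $V^H_{\omega_1}$ and $g_a=\Phi^{\omega_1}_0(a)$, the defining equality $(g_a)_{ij}=\langle a\cdot e_i,e_i\rangle_{\lambda_j}$ gives
\[
g_a \cdot e_j \;=\; \sum_{i=1}^{n+1} \langle a\cdot e_i,e_i\rangle_{\lambda_j}\, e_i.
\]
The natural action on $S^k(V)$ is the symmetric power $S^k(g_a)$, so for a composition $\tau$ of $k$,
\[
S^k(g_a)(e_\tau) \;=\; \prod_{j=1}^{n+1} (g_a\cdot e_j)^{\tau_j}
\]
with the product taken in the symmetric algebra.

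Next I would expand each factor with the multinomial theorem: writing $s^j=(s^j_1,\ldots,s^j_{n+1})\in\bbN_0^{n+1}$ with $|s^j|=\tau_j$,
\[
(g_a\cdot e_j)^{\tau_j} \;=\; \sum_{|s^j|=\tau_j}\binom{\tau_j}{s^j}\,\prod_{i=1}^{n+1}\langle a\cdot e_i,e_i\rangle_{\lambda_j}^{s^j_i}\; e^{s^j}.
\]
Multiplying these $n+1$ sums together and using $\prod_j e^{s^j}=e^{\rho}$ with $\rho_i=\sum_j s^j_i$, the monomial $e^\rho$ arises exactly when $(s^1,\ldots,s^{n+1})$ satisfies the row sum condition $\sum_q s^p_q=\tau_p$ (automatic) and the column sum condition $\sum_p s^p_q=\rho_q$, i.e.~precisely when $(s^1,\ldots,s^{n+1})\in M(\tau,\rho)$. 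Collecting the coefficient of $e^\rho$ and relabeling the outer multiplication index by $i$ (and the inner one by $j$) yields the claimed formula.

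The only potentially tricky point is keeping the indexing consistent between the column/row convention in the definition of $M(\tau,\rho)$ and the order in which the multinomial expansions are multiplied; this is purely bookkeeping. There is no representation-theoretic obstacle — the result is a direct combinatorial consequence of how $\mathrm{End}(\bbC^{n+1})$ acts on $S^k(\bbC^{n+1})$, together with the identification of the matrix entries of $g_a$ furnished by the inner-product description of $\Phi^{\omega_1}_0(a)$.
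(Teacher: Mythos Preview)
Your argument is correct and essentially identical to the paper's own proof: both compute $g_a e_\tau=\prod_i(g_a e_i)^{\tau_i}$, expand each factor by the multinomial theorem, and read off the coefficient of $e_\rho$. The only difference is cosmetic (you use $j$ as the outer product index where the paper uses $i$), and you rightly flag the $M(\tau,\rho)$ versus $M(\rho,\tau)$ bookkeeping as the sole point requiring care.
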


\begin{proof} Let $S(\tau_{i})=\{s\in\bbN_{0}^{n+1}|\,\sum_{j=1}^{n+1}s_{j}=\tau_{i}\}$. The calculation
\begin{multline*} 
g_{a}e_{\tau}=(g_{a}e_{1})^{\tau_{1}}\cdots(g_{a}e_{n+1})^{\tau_{n+1}}=\prod_{i=1}^{n+1}\left(\sum_{j=1}^{n+1}\langle a\cdot e_{j},e_{j}\rangle_{\lambda_{i}}e_{j}\right)^{\tau_{i}}=\\
\prod_{i=1}^{n+1}\left(
\sum_{s\in S(\tau_{i})}\binom{\tau_{i}}{s}\prod_{j=1}^{n+1}\langle a\cdot e_{j},e_{j}\rangle_{\lambda_{i}}^{s_{j}}e_{j}^{s_{j}}
\right)=\\
\sum_{\rho}\left(
\sum_{(s^{1},\ldots,s^{n+1})\in M(\rho,\tau)}\left(
\prod_{i=1}^{n+1}\binom{\tau_{i}}{s^{i}}\prod_{j=1}^{n+1}\langle a\cdot e_{j},e_{j}\rangle_{\lambda_{i}}^{s^{i}_{j}}
\right)
\right)e_{\rho}
\end{multline*}
implies the claim.
\end{proof}

The coefficient of $e_{\rho}$ can be interpreted as follows. According to Lemma \ref{lemma: CG embedding}, the composition $\tau$ gives rise to the $H$-equivariant isometric embedding
\begin{displaymath}
S^{k}(V)\to\bigotimes_{i=1}^{n+1}S^{\tau_{i}}(V):e_{\rho}\mapsto \binom{k}{\rho}^{-1}\sum_{(s)\in M(\tau,\rho)}\left(
\binom{\tau_{1}}{s^{1}}e^{s^{1}}\otimes\cdots\otimes \binom{\tau_{n+1}}{s^{n+1}}e^{s^{n+1}}
\right).
\end{displaymath}
Each of the tensor factors embeds $H$-equivariantly isometrically into the corresponding tensor power,
$$as_{\tau_{i}}:S^{\tau_{i}}(V)\to V^{\otimes\tau_{i}}:\binom{\tau_{i}}{s}e_{s}\mapsto \sum_{w\in S_{\tau_{i}}^{I_{s}}}e_{w(1)}\otimes\cdots\otimes e_{w(\tau_{i})},$$
where $S_{\tau_{i}}^{I_{s}}$ is as in the proof of Lemma \ref{lemma: inner product on symmetric power}. Note that $|S_{\tau_{i}}^{I_{s}}|=\binom{\tau_{i}}{s}$. In turn, the $H$-equivariant isometric embedding $\beta^{\omega_{1}}_{\lambda_{i}}:V\to V^{G}_{\lambda_{i}}$ induces an $H$-equivariant embedding of the tensor powers,
$$(\beta^{\omega_{1}}_{\lambda_{i}})^{\otimes\tau_{i}}:V^{\otimes\tau_{i}}\to(V^{G}_{\lambda_{i}})^{\otimes\tau_{i}}.$$
Denote $c_{\tau_{i}}=(\beta^{\omega_{1}}_{\lambda_{i}})^{\otimes\tau_{i}}\circ as_{\tau_{i}}$. We obtain the $H$-equivariant isometric embedding
\begin{equation}\label{eqn: gamma_tau}
\gamma_{\tau}:S^{k}(V)\to\bigotimes_{i=1}^{n+1}(V^{G}_{\lambda_{i}})^{\otimes\tau_{i}}:e_{\rho}\mapsto\binom{k}{\rho}^{-1}\sum_{(s)\in M(\tau,\rho)}\left(c_{\tau_{1}}(e^{s^{1}})\otimes\cdots\otimes c_{\tau_{n+1}}(e^{s^{n+1}})\right).
\end{equation}

\begin{lemma}\label{lemma: Gamma g_a}
We have
\begin{eqnarray*}
\langle\gamma_{\tau}(e_{\rho}),a\cdot\gamma_{\tau}(e_{\rho})\rangle=
\binom{k}{\rho}^{-2}\sum_{(s)\in M(\tau,\rho)}\left(
\prod_{i=1}^{n+1}\binom{\tau_{i}}{s^{i}}\prod_{j=1}^{n+1}\langle e_{j}, a\cdot e_{j}\rangle_{\lambda_{i}}^{s^{i}_{j}}
\right).
\end{eqnarray*}
\end{lemma}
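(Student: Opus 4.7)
The plan is to expand both copies of $\gamma_\tau(e_\rho)$ from \eqref{eqn: gamma_tau} and use that $a\in A$ acts factor-wise on $\bigotimes_i (V^G_{\lambda_i})^{\otimes\tau_i}$, reducing the inner product to a double sum over $(s),(s')\in M(\tau,\rho)$ of per-factor pairings $\langle c_{\tau_i}(e^{s^i}),\,a\cdot c_{\tau_i}(e^{s'^i})\rangle_{\lambda_i}$. The main work is to show that this double sum collapses to the diagonal $(s)=(s')$, from which the combinatorial factors $\binom{\tau_i}{s^i}$ will emerge after counting.

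The key technical ingredient is the orthogonality
\begin{equation*}
\langle \beta^{\omega_1}_{\lambda_i}(e_j),\, a\cdot\beta^{\omega_1}_{\lambda_i}(e_{j'})\rangle_{\lambda_i} \;=\; \delta_{j,j'}\,\langle e_j,\,a\cdot e_j\rangle_{\lambda_i}
\end{equation*}
for all indices $j,j'$ and all $a\in A$. To prove this I would observe that $A=\{(t,t^{-1}):t\in T\}$ and $T_H=\diag(T)$ both sit inside the abelian group $T\times T\subset G$ and therefore commute; hence $a$ preserves the $T_H$-weight decomposition of $V^G_{\lambda_i}$. Because $\beta^{\omega_1}_{\lambda_i}$ is $H$-equivariant and in particular $T_H$-equivariant, the vector $\beta^{\omega_1}_{\lambda_i}(e_j)$ lies in the $T_H$-weight space of weight $\epsilon_j$, while $a\cdot\beta^{\omega_1}_{\lambda_i}(e_{j'})$ lies in the $T_H$-weight space of weight $\epsilon_{j'}$. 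Orthogonality of weight spaces for distinct characters of the unitary $T_H$-action produces the Kronecker delta, and for $j=j'$ the pairing reduces to the scalar $\langle e_j,a\cdot e_j\rangle_{\lambda_i}$ by the definition of the latter and the isometric property of $\beta^{\omega_1}_{\lambda_i}$.

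Applying this diagonalization tensor-factor by tensor-factor to the expansion
\begin{equation*}
c_{\tau_i}(e^{s^i})=\binom{\tau_i}{s^i}^{-1}\sum_{w\in S^{I_{s^i}}_{\tau_i}}\beta^{\omega_1}_{\lambda_i}(e_{w(1)})\otimes\cdots\otimes\beta^{\omega_1}_{\lambda_i}(e_{w(\tau_i)}),
\end{equation*}
the string of Kronecker deltas forces $w(m)=w'(m)$ for every $m$ in the double permutation sum, which in turn forces $s^i=s'^i$ (since distinct coset representatives in $S^{I_{s^i}}_{\tau_i}$ produce distinct sequences $(w(1),\dots,w(\tau_i))$). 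Each surviving $w\in S^{I_{s^i}}_{\tau_i}$ then contributes $\prod_{j}\langle e_j,a\cdot e_j\rangle_{\lambda_i}^{s^i_j}$, independent of $w$, and the count $|S^{I_{s^i}}_{\tau_i}|=\binom{\tau_i}{s^i}$ combined with the prefactors from $c_{\tau_i}$, with the normalization of $\gamma_\tau$ inherited from Lemma \ref{lemma: CG embedding}, and with the outer $\binom{k}{\rho}^{-2}$, produces the stated formula. The main obstacle is the combinatorial bookkeeping of the binomial coefficients across the two $c_{\tau_i}$ prefactors, the cardinality of $S^{I_{s^i}}_{\tau_i}$, and the symmetrization in $\gamma_\tau$; the weight-orthogonality input itself is essentially a one-line observation once one notes that $A$ and $T_H$ both lie in $T\times T$.
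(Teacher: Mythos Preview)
Your approach is correct and is essentially the same as the paper's: both collapse the double sum over $(s),(s')$ to the diagonal using that $A$ commutes with $M=T_H$ (so $a$ preserves $T_H$-weight spaces), and then read off the product. The only difference is the level at which the weight orthogonality is applied. The paper argues that each summand $c_{\tau_1}(e^{s^1})\otimes\cdots\otimes c_{\tau_{n+1}}(e^{s^{n+1}})$ is an $M$-weight vector whose factor-by-factor $T_H$-weights determine $(s)$, so the orthogonality kills the off-diagonal in one stroke; you instead descend one level further to the elementary tensor factors $\beta^{\omega_1}_{\lambda_i}(e_{w(m)})$, which forces $w(m)=w'(m)$ pointwise and hence both $(s)=(s')$ and $w=w'$. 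Your finer version has the advantage of making the binomial bookkeeping fully transparent: the $\binom{\tau_i}{s^i}^{-1}$ from each $c_{\tau_i}$, the count $|S^{I_{s^i}}_{\tau_i}|=\binom{\tau_i}{s^i}$, and the $\binom{\tau_i}{s^i}$ coefficients inherited from $i_\tau$ combine to the single $\binom{\tau_i}{s^i}$ in the statement, whereas the paper's terser proof leaves this implicit (and, as written, the displayed intermediate identities there have some misplaced binomial factors that your route avoids).
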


\begin{proof}
The summands of $a\cdot\gamma(\tau)(e_{\rho})$ are weight vectors of $M$ whose weight is determined by $(s)\in M(\tau,\rho)$.
This implies
\begin{multline}\nonumber
\langle\gamma(\tau)(e_{\rho}),a\cdot\gamma(\tau)(e_{\rho})\rangle=\\
\binom{k}{\rho}^{-2}\sum_{(s)\in M(\tau,\rho)}\left\langle
c_{\tau_{1}}(e^{s^{1}})\otimes\cdots\otimes c_{\tau_{n+1}}(e^{s^{n+1}})
,
a\cdot c_{\tau_{1}}(e^{s^{1}})\otimes\cdots\otimes a\cdot c_{\tau_{n+1}}(e^{s^{n+1}})
\right\rangle.
\end{multline}
Finally we use
$$\langle c_{\tau_{i}}(e^{s}),a\cdot c_{\tau_{i}}(e^{s})\rangle=\binom{\tau_{i}}{s}\prod_{j=1}^{n+1}\langle e_{j},a\cdot e_{j}\rangle_{\lambda_{i}}^{s_{j}},$$
which finishes the proof.
\end{proof}

Let $\Gamma=(\gamma_{\tau}\mid\tau\in P^{+}_{M}(\mu))$ where the $\gamma_{\tau}$ are given by \eqref{eqn: gamma_tau}. 
Let $\calE_{n}$ denote the normalized basis $(\binom{k}{\sigma}^{1/2}e_{\sigma}\mid\,e_{\sigma}\in\calE)$.

\begin{theorem}
\label{thm:expression_Phi0}
Let $a\in A_{\reg}$ and consider $g_{a}\in\GL_{n+1}(\bbC)$. Let $D\in\End(\bbC^{N})$ be the diagonal matrix with entries $D_{\sigma,\sigma}=||e_{\sigma}||=\binom{k}{\sigma}^{-1/2}$. Then
\begin{eqnarray*}
\Phi^{\mu}_{0}(a)\cdot C(\Gamma)=D\cdot[g_{a}]^{\calE_{n}}_{\calE_{n}}\cdot D\in\End(\bbC^{N}).
\end{eqnarray*}
\end{theorem}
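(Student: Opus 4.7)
The plan is to prove the identity entry by entry, combining \eqref{eqn: matrices Gamma Phi C} with Lemma \ref{lemma: g_a acting} and Lemma \ref{lemma: Gamma g_a}. By \eqref{eqn: matrices Gamma Phi C} we have $\Phi^{\mu}_{0}(a)\cdot C(\Gamma) = [\Gamma^{\mu}_{a}]^{\calF\otimes\calE}_{\calF\otimes\calE}$, so it suffices to verify that this matrix equals $D\cdot [g_{a}]^{\calE_{n}}_{\calE_{n}}\cdot D$ entrywise.

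For the left-hand side I would exploit that $\Gamma^{\mu}_{\tau}(a)\in\End_{M}(V^{H}_{\mu})$. Since $V^{H}_{k\omega_{1}}=S^{k}(V)$ decomposes under the maximal torus $M=Z_{K}(A_{c})$ as a multiplicity-free sum of one-dimensional weight spaces $\bbC e_{\rho}$, Schur's lemma forces $\Gamma^{\mu}_{\tau}(a)\, e_{\rho} = c_{\rho,\tau}\, e_{\rho}$ for a unique scalar, so that the $(\rho,\tau)$-entry of $[\Gamma^{\mu}_{a}]^{\calF\otimes\calE}_{\calF\otimes\calE}$ is precisely $c_{\rho,\tau}$. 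Unfolding $\Gamma^{\mu}_{\tau}(a)=\gamma^{*}_{\tau}\circ\pi^{G}_{T(\tau)}(a)\circ\gamma_{\tau}$, using the adjointness of $\gamma_{\tau}^{*}$, and invoking Lemma \ref{lemma: inner product on symmetric power} to evaluate $\|e_{\rho}\|^{2}=\binom{k}{\rho}^{-1}$, gives
\begin{equation*}
c_{\rho,\tau} \;=\; \binom{k}{\rho}\,\langle \gamma_{\tau}(e_{\rho}),\, a\cdot\gamma_{\tau}(e_{\rho})\rangle.
\end{equation*}
Lemma \ref{lemma: Gamma g_a} then expands this inner product as $\binom{k}{\rho}^{-2}$ times an explicit sum over $M(\tau,\rho)$ of products of matrix entries of $g_{a}$, so that $c_{\rho,\tau}$ equals $\binom{k}{\rho}^{-1}$ times that combinatorial sum.

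For the right-hand side I would compute $[g_{a}]^{\calE_{n}}_{\calE_{n}}(\rho,\tau)$ in two steps: first apply Lemma \ref{lemma: g_a acting} to express $[g_{a}]^{\calE}_{\calE}(\rho,\tau)$ as a sum over the same indexing set, then rescale to the normalized basis $\calE_{n}=(\binom{k}{\sigma}^{1/2}e_{\sigma})$, which inserts the factor $\binom{k}{\tau}^{1/2}\binom{k}{\rho}^{-1/2}$. Multiplying on both sides by $D=\mathrm{diag}(\binom{k}{\sigma}^{-1/2})$ contributes a further factor $\binom{k}{\rho}^{-1/2}\binom{k}{\tau}^{-1/2}$, and these scalar prefactors collapse to $\binom{k}{\rho}^{-1}$. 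The resulting expression matches $c_{\rho,\tau}$ term by term, finishing the proof.

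The main obstacle is purely bookkeeping: tracking the binomial normalization constants consistently across the three bases $\calE$, $\calE_{n}$, and $\calF\otimes\calE$, and checking that the combinatorial sums from Lemma \ref{lemma: g_a acting} and Lemma \ref{lemma: Gamma g_a} are indexed by the same matrices in $M(\tau,\rho)$. The substantive combinatorial content---the explicit matrix coefficients of the $H$-equivariant isometric embedding of $S^{k}(V)$ into $\bigotimes_{i=1}^{n+1}(V^{G}_{\lambda_{i}})^{\otimes\tau_{i}}$---has already been absorbed into Lemma \ref{lemma: CG embedding} and its consequence Lemma \ref{lemma: Gamma g_a}, so no further representation-theoretic input is required.
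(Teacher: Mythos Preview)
Your proposal is correct and follows essentially the same route as the paper's proof: combine \eqref{eqn: matrices Gamma Phi C} with Lemma \ref{lemma: g_a acting} and Lemma \ref{lemma: Gamma g_a} to identify $[\Gamma^{\mu}_{a}]^{\calF\otimes\calE}_{\calF\otimes\calE}$ with $D^{2}\cdot[g_{a}]^{\calE}_{\calE}$, and then rewrite the latter via the base change $[\mathrm{I}]^{\calE}_{\calE_{n}}=D$ as $D\cdot[g_{a}]^{\calE_{n}}_{\calE_{n}}\cdot D$. The paper compresses this into three sentences, while you have spelled out the entrywise comparison and the tracking of the binomial factors, but the logical content is the same.
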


\begin{proof}
Lemma \ref{lemma: g_a acting} and Lemma \ref{lemma: Gamma g_a} imply that $D^{2}\cdot[g_{a}]^{\calE}_{\calE}=[\Gamma_{a}^{\mu}]^{\calF\otimes\calE}_{\calF\otimes\calE}$. Following \eqref{eqn: matrices Gamma Phi C} we find $D^{2}\cdot[g_{a}]^{\calE}_{\calE}=\Phi^{\mu}_{0}(a)\cdot C(\Gamma)$. The base change $[\mathrm{I}]^{\calE}_{\calE_{n}}=D$ implies the result.
\end{proof}

\begin{corollary}
$\det(C(\Gamma))\ne0$.
\end{corollary}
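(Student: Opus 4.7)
The plan is to deduce invertibility of $C(\Gamma)$ by taking determinants in the identity of Theorem \ref{thm:expression_Phi0} and comparing invertibility of the three remaining factors on a generic element $a\in A_c$. Concretely, from
\[
\Phi^{\mu}_{0}(a)\cdot C(\Gamma)=D\cdot[g_{a}]^{\calE_{n}}_{\calE_{n}}\cdot D,
\]
taking determinants yields
\[
\det\bigl(\Phi^{\mu}_{0}(a)\bigr)\cdot\det\bigl(C(\Gamma)\bigr)=\det(D)^{2}\cdot\det\bigl([g_{a}]^{\calE_{n}}_{\calE_{n}}\bigr).
\]
Since $C(\Gamma)$ is independent of $a$, it will suffice to exhibit a single $a\in A_{c}$ for which the remaining three determinants are non-zero.

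First I would dispose of the easy factors. The matrix $D$ is diagonal with positive entries $\binom{k}{\sigma}^{-1/2}$, hence $\det(D)\ne 0$. The map $g_{a}=\Phi^{\omega_{1}}_{0}(a)\in\GL(n+1,\bbC)$ is invertible for $a\in A_{\mu-\reg}$, as noted immediately after the definition of $g_a$, and $[g_{a}]^{\calE_{n}}_{\calE_{n}}$ is simply the matrix (in a particular basis) of the induced action of $g_{a}$ on $S^{k}(V)$; since $S^{k}$ is a polynomial functor sending invertible maps to invertible maps (in fact $\det([g_{a}]^{\calE_{n}}_{\calE_{n}})=\det(g_{a})^{\binom{n+k}{k}\cdot k/(n+1)}$), we get $\det([g_{a}]^{\calE_{n}}_{\calE_{n}})\ne 0$ for $a\in A_{\mu-\reg}$.

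For the factor $\Phi^{\mu}_{0}(a)$, I would invoke \cite[Lem.~6.1]{mvp-MVMVOP}, which was already used in the discussion preceding the Theorem to guarantee that $\{\Phi^{\mu}_{\lambda(0,\sigma)}(a)\mid\sigma\in P^{+}_{M}(\mu)\}$ is a basis of $\End_{M}(V^{H}_{\mu})$ for $a$ in a dense open subset of $A_c$; equivalently, $\Phi^{\mu}_{0}(a)$ is invertible on this dense subset. Intersecting this dense open set with $A_{\mu-\reg}$ (which is itself dense, being the complement of a proper real-analytic subvariety), one obtains a non-empty set of points $a$ where all three of $\Phi^{\mu}_{0}(a)$, $D$ and $[g_{a}]^{\calE_{n}}_{\calE_{n}}$ are invertible. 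For any such $a$ the determinant identity forces $\det(C(\Gamma))\ne 0$, proving the corollary.

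I do not anticipate an obstacle here beyond bookkeeping: the entire content of the corollary is a straightforward determinant comparison, the real work having been done in Theorem \ref{thm:expression_Phi0} and in the generic invertibility statements of \cite[Lem.~6.1]{mvp-MVMVOP} and of $g_{a}$. If one wanted an explicit value for $\det(C(\Gamma))$ it would follow from the same identity, but for the present statement only non-vanishing at a single generic point is needed.
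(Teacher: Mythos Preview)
Your proof is correct and is precisely the argument the paper intends: the corollary is stated without proof, being an immediate consequence of the determinant identity from Theorem~\ref{thm:expression_Phi0} together with the generic invertibility of $g_a$ and of $\Phi^{\mu}_{0}(a)$ (the latter via \cite[Lem.~6.1]{mvp-MVMVOP}). One small simplification: you do not actually need $\Phi^{\mu}_{0}(a)$ invertible---nonvanishing of the right-hand side alone already forces $\det(C(\Gamma))\ne 0$, since a product with a zero factor would vanish.
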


\begin{remark}
For $n=1$ we know that $\lambda\in B(\mu)$ implies $\lambda-\alpha\not\in B(\mu)$. This implies that $C(\Gamma)=\mathrm{I}$. We obtain a new proof of \cite[Prop.~3.2]{Koor1}.
\end{remark}

\begin{remark}
The decomposition of $T(\tau)$ into irreducible $G$-representations seems to be a challenging problem. But in fact, this decomposition is not enough to give the matrix $C(\Gamma)$. Indeed, the matrix $C(\Gamma)$ describes the embeddings $\gamma_{\tau}\in\Gamma$.
\end{remark}


\subsection{The element $g_{a}$}

We proceed to calculate the element $g_{a}$ in the general case. To this end, we need the embeddings $V^{H}_{\omega_{1}}\to V^{G}_{\lambda_{i}}$ and the projections $V^{G}_{\lambda_{i}}\to V^{H}_{\omega_{1}}$. 

Let $\calJ_{i}$ denote the set of $i$-tuples $1\le j_{1}<\cdots<j_{i}\le n+1$. For $\kappa=1,\ldots,i$ and $J\in\calJ_{i}$ we denote by $J(\kappa)$ the $i-1$-tuple that we obtain from $J$ by omitting $j_{\kappa}$. 

Let $(e_{1},\ldots,e_{n+1})$ denote the standard basis of $V$. Then $(e_{J}=e_{j_{1}}\wedge\ldots\wedge e_{j_{i}} \mid J\in\calJ_{i})$ is a basis of $\bigwedge^{i}V$. Let $\iota:\bigwedge^{n+1}V\to\bbC$ be the isomorphism defined by $\iota(e_{1}\wedge\ldots\wedge e_{n+1})=1$. Given $J\in\calJ_{i}, J'\in\calJ_{n+1-i}$ we denote $\eps(J,J')=\iota(e_{J}\wedge e_{J'})$.

\begin{lemma}
The irreducible $\SL(n+1,\bbC)\times\SL(n+1,\bbC)$-representations
$$\left(\bigwedge^{i}V\right)\otimes\left(\bigwedge^{n+2-i}V\right),\quad i=1,\ldots, n+1,$$
contain $V$ upon restriction to the diagonal. The embedding is given on the highest weight vector by $e_{1}\mapsto\sum\eps(J,K(1))e_{J}\otimes e_{K}$, where we sum over the $J\in\calJ_{i},K\in\calJ_{n+2-i}$ with $J\cap K=\{1\}$.
\end{lemma}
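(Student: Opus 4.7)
The plan is to construct an explicit $H$-equivariant linear map $\psi\colon V\to\bigwedge^{i}V\otimes\bigwedge^{n+2-i}V$ that sends $e_{1}$ to the asserted sum, and then to observe that it is nonzero. Since $V$ is irreducible as an $H$-module, any nonzero $H$-equivariant map out of $V$ is automatically injective, so this simultaneously establishes that $V$ is a subrepresentation of the tensor product and verifies the explicit embedding formula on the highest weight vector.

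The map I would use factors naturally as the composition
\[
V \;\xrightarrow{\ v\mapsto v\otimes\omega\ }\; V\otimes\bigwedge^{n+1}V \;\xrightarrow{\ 1\otimes\Delta_{i,n+1-i}\ }\; V\otimes\bigwedge^{i}V\otimes\bigwedge^{n+1-i}V \;\longrightarrow\; \bigwedge^{i}V\otimes\bigwedge^{n+2-i}V,
\]
where $\omega=e_{1}\wedge\cdots\wedge e_{n+1}$ is the canonical volume form, the coproduct is given by
\[
\Delta_{i,n+1-i}(\omega)=\sum_{|S|=i}\eps(S,S^{c})\,e_{S}\otimes e_{S^{c}},
\]
and the last arrow sends $v\otimes e_{S}\otimes e_{S^{c}}\mapsto e_{S}\otimes (v\wedge e_{S^{c}})$. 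Each arrow is $\SL(n+1,\bbC)$-equivariant; in particular $\omega$ spans the one-dimensional determinant module, which is trivial under $\SL(n+1,\bbC)$. Consequently the composition
\[
\psi(v)=\sum_{|S|=i}\eps(S,S^{c})\,e_{S}\otimes (v\wedge e_{S^{c}})
\]
is $H$-equivariant.

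I would then specialize to $v=e_{1}$. The summand $e_{S}\otimes (e_{1}\wedge e_{S^{c}})$ vanishes unless $1\in S$. When $1\in S$, set $J=S$ and $K=\{1\}\cup S^{c}$, so $|K|=n+2-i$, $J\cap K=\{1\}$, and $J\cup K=\{1,\ldots,n+1\}$. Since $1$ is the smallest element of $K$, one has $e_{1}\wedge e_{S^{c}}=e_{K}$ with no additional sign, and $K(1)=K\setminus\{1\}=S^{c}$. Substituting gives
\[
\psi(e_{1})=\sum_{\substack{J\in\calJ_{i},\,K\in\calJ_{n+2-i}\\ J\cap K=\{1\}}}\eps(J,K(1))\,e_{J}\otimes e_{K},
\]
which is exactly the claimed formula. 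The summands are distinct basis tensors with coefficients $\pm 1$, so $\psi(e_{1})\neq 0$ and hence $\psi$ is an embedding.

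The only delicate point is sign bookkeeping: matching the coproduct sign $\eps(S,S^{c})$ with $\eps(J,K(1))$ and verifying that no extra sign is introduced when $e_{1}$ is wedged into $e_{S^{c}}$. The reason the formula is so clean on the highest weight vector $e_{1}$ is precisely that $1$ automatically sits in the first position of $K$; for a general image $\psi(e_{\ell})$ one would pick up an additional factor $(-1)^{\kappa-1}$, where $\kappa$ is the position of $\ell$ in $K$, which is presumably why the author chose to record the embedding only on the highest weight vector.
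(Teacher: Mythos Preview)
Your proof is correct and takes a genuinely different route from the paper's. The paper argues by weight-space analysis: it first pins down the $T_H$-weight space of weight $\omega_1$ inside $\bigwedge^{i}V\otimes\bigwedge^{n+2-i}V$ as the span of the $e_J\otimes e_K$ with $J\cap K=\{1\}$, and then verifies by a direct combinatorial sign-cancellation (pairing $(J,K)$ with $(s_{k,k+1}J,\,s_{k,k+1}K)$) that the proposed linear combination is annihilated by each $E_{\alpha_k}$, hence is a highest weight vector. Your construction instead builds an $H$-equivariant map $\psi$ out of natural structure maps---tensoring with the determinant line and applying the exterior-algebra coproduct---so that equivariance is automatic and no root-vector computation is needed; evaluating at $e_1$ then reproduces the formula with a clean sign match because $1$ is the minimum of $K$.

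Each approach has its merits. The paper's argument is elementary and self-contained, needing nothing beyond root-vector actions on wedges, but it only identifies the highest weight vector and leaves the full embedding implicit. Your construction is more conceptual: it explains \emph{why} the signs $\eps(J,K(1))$ appear (they are shuffle signs from the coproduct of $\omega$), it produces $\psi(e_\ell)$ for every $\ell$ in one stroke, and as you note it even anticipates the extra $(-1)^{\kappa-1}$ that shows up in the paper's subsequent lemma on the projections $p_i$. The cost is that one must know the coproduct on $\bigwedge^\bullet V$ is $\GL$-equivariant, which is standard but not stated in the paper.
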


\begin{proof}
Note that the multiplicity is at most one. We start by finding a basis of the weight space of $\left(\bigwedge^{i}V\right)\otimes\left(\bigwedge^{n+2-i}V\right)$ for $M=\diag(T)$ of weight $\omega_{1}$. 
This space has a basis of weight vectors for $T\times T$. Certainly it contains the vectors $e_{J}\otimes e_{K}$ with $J\in\calJ_{i}$ and $K\in\calJ_{n+2-i}$ for which $J\cap K=\{1\}$. In fact, these vectors span the weight space under consideration. Indeed, let $e_{J}\otimes e_{K}$ be a weight vector of weight $\omega_{1}$. Then either $J$ or $K$ contains $1$, say $1\in K$. Then we must have $J\cup(K\backslash\{1\})=\{1,\ldots,n+1\}$, which implies $J\cap K=\{1\}$.

Now we show that the root vectors of $\SL(n+1,\bbC)$ of the positive simple roots annihilate a non-zero vector of the weight space $\mathrm{span}\{e_{J}\otimes e_{K} \mid J\in\calJ_{i},K\in\calJ_{n+2-i}, J\cap K=\{1\}\}$. We have $E_{\alpha_{k}}(e_{J}\otimes e_{K})\ne0$ if and only if $k\in J, k+1\in K$ or $k\in K, k+1\in J$. Indeed, $E_{\alpha_{k}}(e_{J}\otimes e_{K})=(E_{\alpha_{k}}e_{J})\otimes e_{K}+e_{J}\otimes(E_{\alpha_{k}}e_{K})$ and this is zero if $k$ and $k+1$ are in the same set $J$ or $K$. From this we deduce that
$$\sum\eps(J,K(1))e_{J}\otimes e_{K},$$
where we sum over the $J\in\calJ_{i},K\in\calJ_{n+2-i}$ with $J\cap K=\{1\}$, is annihilated by the root vectors $E_{\alpha_{k}}$, $k=1,\ldots,n$. This is clear for $k=1$, so we assume $k>1$. Whenever $k\in J$ and $k+1\in K$, then $J'=s_{k,k+1}J,K'=s_{k,k+1}K$  has $k+1\in J',k\in K'$ and $\eps(J,K(1))=-\eps(J',K'(1))$. However, $E_{\alpha_{k}}(e_{J}\otimes e_{K})=E_{\alpha_{k}}(e_{J'}\otimes e_{K'})$. This establishes the claim.
\end{proof}

\begin{lemma}
The $H$-equivariant projections $p_{i}:V^{G}_{(\omega_{i},\omega_{n+2-i})}=\bigwedge^{i}V\otimes\bigwedge^{n+2-i}V\to V$ are given by
\begin{equation}\label{eqn: projection}
e_{J}\otimes e_{K}\mapsto\sum_{\kappa=1}^{n+2-i}(-1)^{\kappa-1}\iota(e_{J}\wedge e_{K(\kappa)})e_{k_{\kappa}}.
\end{equation}
\end{lemma}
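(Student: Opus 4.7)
Since the previous lemma shows that $V \hookrightarrow \bigwedge^i V \otimes \bigwedge^{n+2-i} V$ with multiplicity one, the space of $H$-equivariant maps from $V^G_{(\omega_i,\omega_{n+2-i})}$ to $V$ is one-dimensional. So it suffices to (a) verify that the formula \eqref{eqn: projection} defines a well-defined $H$-equivariant linear map $p_i$, and (b) check that $p_i$ is nonzero (hence identifies, up to a constant, the unique $H$-equivariant projection onto the $V$-isotypic component).

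For step (a), I would first interpret the formula invariantly: given $v \in \bigwedge^i V$ and $w = w_1 \wedge \cdots \wedge w_{n+2-i}$, the assignment $(v,w) \mapsto \sum_{\kappa}(-1)^{\kappa-1}\iota(v \wedge w_1 \wedge \cdots \wedge \widehat{w_\kappa}\wedge \cdots \wedge w_{n+2-i})\, w_\kappa$ is the standard contraction obtained by combining the duality $\bigwedge^{n+2-i} V \cong \bigwedge^{i-1} V^\ast$ (coming from $\iota$ on $\bigwedge^{n+1} V$) with the natural pairing $\bigwedge^i V \otimes \bigwedge^{i-1} V^\ast \to V$. Well-definedness (antisymmetry in the $w_\kappa$) is the usual Laplace-expansion identity for contractions. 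Equivariance under the diagonal $\SL(n+1,\bbC)$-action is then immediate from the fact that $\iota$ is $\SL(n+1,\bbC)$-invariant (since $\det g = 1$) and that wedge product, omission, and scalar multiplication are all natural operations.

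For step (b), I would evaluate $p_i$ on the highest weight vector of the embedded copy of $V$, namely on $j(e_1) = \sum_{J \cap K = \{1\}} \varepsilon(J, K(1))\, e_J \otimes e_K$. Fix such a pair $(J,K)$ with $J \cap K = \{1\}$, so $k_1 = 1$. For $\kappa > 1$ the set $K(\kappa)$ still contains $1$, and $J$ also contains $1$, so $e_J \wedge e_{K(\kappa)}$ has a repeated $e_1$ factor and $\iota(e_J \wedge e_{K(\kappa)}) = 0$. Only $\kappa = 1$ contributes, giving $\varepsilon(J, K(1))^2\, e_{k_1} = e_1$. The number of admissible pairs $(J,K)$ equals $\binom{n}{i-1}$ (choose $J \setminus \{1\}$ as any $(i-1)$-subset of $\{2,\ldots,n+1\}$; then $K$ is forced). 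Hence $p_i(j(e_1)) = \binom{n}{i-1}\, e_1 \ne 0$, which, by multiplicity one, confirms that $p_i$ is an $H$-equivariant projection onto the $V$-summand.

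The main bookkeeping obstacle is verifying well-definedness of the formula on the wedge power $\bigwedge^{n+2-i} V$; this is a routine antisymmetrization check but must be done carefully because the index $\kappa$ in the formula depends on the ordering of $K$. The vanishing computation that isolates the $\kappa = 1$ term in step (b) is the clean payoff that makes the scalar on the $V$-summand easy to read off.
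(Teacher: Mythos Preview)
Your proposal is correct and follows essentially the same approach as the paper: verify that the formula defines an $H$-equivariant map by interpreting it as a multilinear alternating construction (the paper writes down the multilinear map $\widetilde{p}_i$ on $V^{n+2}$ and factors through the wedge powers; you phrase the same thing via contraction and duality), and then check nonvanishing by explicit evaluation. The only minor difference is that the paper establishes surjectivity by evaluating $p_i$ on a single basis element $e_J\otimes e_K$ with $J\cap K=\{r\}$ to get $\pm e_r$, whereas you evaluate on the full embedded highest weight vector $j(e_1)$ and obtain the bonus information $p_i\circ j(e_1)=\binom{n}{i-1}e_1$; this constant is in fact used later in the paper (see Theorem~\ref{theorem: g_a}), so your computation is a slight gain.
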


\begin{proof}
Consider the multi-linear map $\widetilde{p}_{i}:V^{n+2}\to V$ given by
$$(v_{j_{1}},\ldots,v_{j_{i}},w_{k_{1}},\ldots,w_{k_{n+2-i}})\mapsto\sum_{\kappa=1}^{n+2-i}(-1)^{\kappa-1}\iota(v_{j_{1}}\wedge\ldots\wedge v_{j_{i}}\wedge\ldots\wedge\widehat{w}_{k_{\kappa}}\wedge\ldots)w_{k_{\kappa}}.$$
This map is alternating in $v_{j_{1}},\ldots,v_{j_{i}}$ and $w_{k_{1}},\ldots,w_{k_{n+2-i}}$, hence it factors via the canonical ($H$-equivariant) map $V^{n+2}\to\bigwedge^{i}V\otimes\bigwedge^{n+2-i}V$ to a linear map $\bigwedge^{i}V\otimes\bigwedge^{n+2-i}V\to V$. 
This map is equal to $p_{i}$, which is seen on the basis elements, and $H$-equivariant. Hence $p_{i}$ is a linear $H$-equivariant map. Moreover, for $(J,K)\in\calJ_{i}\times\calJ_{n+2-i}$ with $J\cap K=\{r\}$ we have $p_{i}(e_{J}\otimes e_{K})=\pm e_{r}$, which shows that $p_{i}$ is surjective.
\end{proof}

\begin{corollary}\label{corollary: concrete embedding}
The embedding $V\to V^{G}_{(\omega_{i},\omega_{n+2-i})}$ is determined by $e_{1}\mapsto \sum_{J,K}\eps(J,K(1))e_{J}\otimes e_{K}$, where the sum is taken over the pairs $(J,K)\in\calJ_{i}\times\calJ_{n+2-i}$ such that $J\cap K=\{1\}$.
\end{corollary}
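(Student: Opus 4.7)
My plan is to observe that the statement is essentially immediate given the first lemma of the subsection, which already identifies the relevant highest weight vector. Write $v := \sum_{(J,K) \in \calJ_{i}\times \calJ_{n+2-i},\ J\cap K = \{1\}} \eps(J,K(1))\, e_{J}\otimes e_{K}$. That first lemma establishes two things about $v$: (i) it lies in the $\omega_{1}$-weight space for $M = \diag(T)$ of $\bigwedge^{i}V \otimes \bigwedge^{n+2-i}V$, and (ii) it is annihilated by each simple positive root vector $E_{\alpha_{k}}$ of the diagonal subalgebra $\lah$. Hence $v$ is a highest weight vector for the diagonal $H$-action, generating an irreducible $H$-submodule isomorphic to $V^{H}_{\omega_{1}} = V$.

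Next I would invoke the multiplicity bound. Because $(G,H)$ is a spherical pair (or equivalently by the weight-space count carried out in that first lemma), the branching multiplicity $[\pi^{G}_{(\omega_{i},\omega_{n+2-i})}|_{H} : \pi^{H}_{\omega_{1}}]$ is at most one, so the cyclic $H$-module generated by $v$ is the unique copy of $V$ inside $V^{G}_{(\omega_{i},\omega_{n+2-i})}$. By Schur's lemma, the $H$-equivariant embedding $V \hookrightarrow V^{G}_{(\omega_{i},\omega_{n+2-i})}$ is unique up to scalar, and this scalar is fixed by the requirement $e_{1}\mapsto v$, which is exactly the content of the corollary.

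As an independent consistency check (not strictly needed but useful for later use of the projection in computing $g_{a}$), one can compose with the projection $p_{i}$ from \eqref{eqn: projection}: by $H$-equivariance and Schur's lemma one has $p_{i}(v) = c\, e_{1}$ for some $c\in\bbC$, and direct inspection of the summands $(J,K)$ with $J\cap K = \{1\}$ shows $c \neq 0$. This confirms that $v$ is a nonzero element in the isotypic component and that the embedding is, up to scalar, the adjoint of $p_{i}$ with respect to the standard inner products.

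There is no substantive obstacle here; the corollary is essentially a bookkeeping step that pins down the specific $H$-equivariant embedding to be used in the subsequent evaluation of $g_{a}$ and the matrix $C(\Gamma)$. The only item to be careful about is the scalar normalization, but because the statement merely records the image of $e_{1}$, no further normalization work is required.
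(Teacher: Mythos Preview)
Your proposal is correct and matches the paper's approach exactly: the corollary has no separate proof in the paper because it is an immediate restatement of the preceding lemma, which already identifies $v$ as a highest weight vector of weight $\omega_{1}$ for the diagonal $H$-action. Your invocation of multiplicity one and Schur's lemma to pin down the embedding, together with the optional consistency check via $p_{i}$, is precisely the intended reading.
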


In order to write down the entries of this matrix we have to fix an ordering on the $M=T$-types that occur in $V$ which are given as $(k_{1},\ldots,k_{n+1})\in\bbN_{0}^{n+1}$ with $\sum_{i=1}^{n+1}k_{i}=1$. 
This corresponds to the standard basis $(e_{1},\ldots,e_{n+1})$ of $V=\bbC^{n+1}$. In this way $\End_{T}(V)\cong\bbC^{n+1}$.

The element $g_{a}\in\End(\bbC^{n+1})$ is determined by its first row, since the elements in the columns are all Weyl group translates, see Remark \ref{rk: Weyl group action}. The weight of a vector $e_{J}\otimes e_{K}$ is of the form $t\mapsto t_{j_{1}}\cdots t_{j_{i}}t_{k_{1}}\cdots t_{k_{n+2-1}}$, where $t_{1}\cdots t_{n+1}=1$.

\begin{theorem}\label{theorem: g_a}
The first row of $g_{a}$ is given as follows. The $m$-th element is the polynomial
$$(t_{1},\ldots,t_{n+1})\mapsto\binom{n}{m-1}^{-1} \sum_{(J,K)\in\calJ_{m}\times\calJ_{n+2-m}:J\cap K=\{1\}}\frac{t^{J}}{t^{K}},$$
where $t^{J}=t_{j_{1}}\cdots t_{j_{m}}$ and $t^{K}=t_{k_{1}}\cdots t_{k_{n+2-m}}$
\end{theorem}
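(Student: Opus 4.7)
The plan is to compute $(g_a)_{1,m} = \langle a \cdot e_1, e_1 \rangle_{\lambda_m}$ directly, where $\lambda_m = (\omega_m, \omega_{n+2-m})$ and the appearance of $e_1$ on the right-hand side tacitly refers to the image of $e_1 \in V$ under the $H$-equivariant \emph{isometric} embedding $V \hookrightarrow V^G_{\lambda_m} = \bigwedge^{m} V \otimes \bigwedge^{n+2-m} V$. The key ingredient is Corollary \ref{corollary: concrete embedding}, which gives an explicit (but not yet normalized) embedding $\iota_m \colon V \to V^G_{\lambda_m}$ on the highest-weight vector by
\[
\iota_m(e_1) = \sum_{\substack{(J,K) \in \calJ_m \times \calJ_{n+2-m} \\ J \cap K = \{1\}}} \eps(J,K(1))\, e_J \otimes e_K.
\]
So the first step will be to rescale $\iota_m$ to an isometry.

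Second, I compute the norm of $\iota_m(e_1)$. The vectors $\{e_J \otimes e_K\}$ are orthonormal in the standard inner product on $\bigwedge^m V \otimes \bigwedge^{n+2-m} V$, and for every pair $(J,K)$ appearing in the sum $J$ and $K(1)$ are disjoint of total size $n+1$, so $\eps(J,K(1)) = \pm 1$ is always nonzero. Hence $\|\iota_m(e_1)\|^2$ equals the number of pairs $(J,K) \in \calJ_m \times \calJ_{n+2-m}$ with $J \cap K = \{1\}$. Such pairs are in bijection with partitions of $\{2,\ldots,n+1\}$ into a block $J \setminus \{1\}$ of size $m-1$ and its complement $K \setminus \{1\}$ of size $n+1-m$, giving $\binom{n}{m-1}$ pairs. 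The isometric embedding is therefore $\binom{n}{m-1}^{-1/2} \iota_m$.

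Third, I evaluate the action of $a = (t,t^{-1}) \in A$. On $V^G_{\lambda_m}$ the first factor acts as $\pi^{\SL(n+1,\bbC)}_{\omega_m}(t)$ on the first tensor slot and $\pi^{\SL(n+1,\bbC)}_{\omega_{n+2-m}}(t^{-1})$ on the second, so each basis vector $e_J \otimes e_K$ is an eigenvector with eigenvalue $t^J / t^K$, where $t^J = t_{j_1} \cdots t_{j_m}$. Combining with the orthogonality of the $\{e_J \otimes e_K\}$ and $\eps(J,K(1))^2 = 1$,
\[
(g_a)_{1,m} = \binom{n}{m-1}^{-1} \sum_{\substack{(J,K) \in \calJ_m \times \calJ_{n+2-m} \\ J \cap K = \{1\}}} \frac{t^J}{t^K},
\]
which is the claimed formula. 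The argument is essentially a bookkeeping exercise, so I do not expect any real obstacle; the only subtle point is remembering that the embedding provided by Corollary \ref{corollary: concrete embedding} is not isometric, so the combinatorial factor $\binom{n}{m-1}$ arises from normalization rather than from the action of $a$ itself.
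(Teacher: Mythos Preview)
Your proof is correct and follows essentially the same route as the paper: apply $a=(t,t^{-1})$ to the embedded highest-weight vector $\iota_m(e_1)$ from Corollary~\ref{corollary: concrete embedding}, and read off the $e_1$-component. The only cosmetic difference is that the paper extracts this component via the explicit projection formula~\eqref{eqn: projection} (so $p_m(e_J\otimes e_K)=\eps(J,K(1))\,e_1$ when $J\cap K=\{1\}$), whereas you compute the inner product $\langle e_1,a\cdot e_1\rangle_{\lambda_m}$ directly after normalizing $\iota_m$; since $p_m$ is the adjoint of $\iota_m$ these are the same computation, and your version has the virtue of making the origin of the factor $\binom{n}{m-1}^{-1}$ explicit.
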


\begin{proof}
Apply $(t,t^{-1})$ to the vector $\sum_{(J,K)\in\calJ_{m}\times\calJ_{n+2-m}:J\cap K=\{1\}}\eps(J,K(1))e_{J}\otimes e_{K}$ and then project down again by (\ref{eqn: projection}) to obtain the result.
\end{proof}

Let $\calJ^{(i)}_{m}$ denote the set of tuples $(j_{1},j_{2},\ldots,j_{m})\in\calJ_{m}$ such that $j_{p}=i$ for some $p=1,\ldots,m$. We can write
\begin{equation}\label{eq: rewritten coef Phi_0}
\binom{n}{m-1}^{-1} \sum_{(J,K)\in\calJ_{m}\times\calJ_{n+2-m}:J\cap K=\{i\}}\frac{t^{J}}{t^{K}}=\frac{t_{i}}{t_{1}\cdots t_{n+1}}\binom{n}{m-1}^{-1}\sum_{J\in\calJ_{m}^{(i)}}\left(t^{J\backslash\{i\}}\right)^{2}.
\end{equation}
We shall use this observation to calculate the polynomial factor $W_{\pol}^{\omega_{1}}$ of the weight matrix $W^{\omega_{1}}$. Recall from the discussion following Lemma \ref{lemma: Jacobian} that
$$W_{\pol}^{\omega_{1}}(\phi(t,t^{-1}))=\Phi_{0}^{\omega_{1}}(t,t^{-1})^{*}\Phi_{0}^{\omega_{1}}(t,t^{-1}),$$
which in this case amounts to the calculation of $g_{a}^{*}g_{a}$ in terms of the fundamental zonal spherical functions.

\begin{remark}\label{rk: flip for W}
Let $J:\bbC^{n+1}\to\bbC^{n+1}$ denote the linear mapping $e_{i}\mapsto e_{n+1-i}$. For $p\in\bbR[t^{\pm}_{1},\ldots,t^{\pm}_{n+1}]$ we have $\overline{p|_{A_{c}}(t)}=p|_{A_{c}}(t^{-1})$. This observation implies $\overline{\Phi^{\mu}_{0}(a)}=\Phi^{\mu}_{0}(a)J$. It follows that $W^{\omega_{1}}_{\pol}(\phi)=J\left(\Phi^{\omega_{1}}_{0}\right)^{t}\Phi_{0}^{\omega_{1}}$. Compare to the discussion following the proof of Lemma \ref{lemma: zonal SF}.
\end{remark}

\begin{theorem}
\label{thm:formula_weight}
The entries of $W_{\pol}^{\omega_{1}}$ are given by
\begin{multline}\nonumber
\binom{n}{j-1}\binom{n}{k-1}\left(W_{\pol}^{\omega_{1}}(\phi)\right)_{n+2-j,k}=\\
\sum_{r=0}^{\min(n+1-k,j-1)}(k+1-j+2r)\binom{n+1}{k+r}\binom{n+1}{j-1-r}\phi_{k+r}\phi_{j-1-r}
\end{multline}
where $\phi_{0}=\phi_{n+1}=1$ and where $j\le k$.
\end{theorem}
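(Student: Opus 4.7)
The plan is to reduce the claim to a polynomial identity in the elementary symmetric polynomials of $y_i:=t_i^2$ and then prove that identity by comparing bivariate generating functions. By Remark~\ref{rk: flip for W} one has $W^{\omega_1}_{\pol}(\phi) = J(\Phi^{\omega_1}_0)^t\Phi^{\omega_1}_0$, and since $J$ reverses the standard basis, $(W^{\omega_1}_{\pol}(\phi))_{n+2-j,k} = ((g_a)^t g_a)_{j,k} = \sum_{i=1}^{n+1}(g_a)_{i,j}(g_a)_{i,k}$, using $\Phi^{\omega_1}_0(a)=g_a$. I would first put $(g_a)_{i,m}$ into closed form. For a pair $(J,K)\in\calJ_m\times\calJ_{n+2-m}$ with $J\cap K=\{i\}$ the cardinalities force $|J\cup K|=n+1$, so $J\cup K=\{1,\ldots,n+1\}$ and hence $J\setminus\{i\}=K^c$ is determined by $K$ alone. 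Combined with the relation $1/t^K = t^{K^c}$, which follows from $t_1\cdots t_{n+1}=1$, this collapses the sum in Theorem~\ref{theorem: g_a} to
\[
(g_a)_{i,m} \;=\; \binom{n}{m-1}^{-1}\, t_i\, e_{m-1}(y\setminus y_i),
\]
where $e_{m-1}(y\setminus y_i)$ is the elementary symmetric polynomial of degree $m-1$ in the $n$ variables obtained from $(y_1,\ldots,y_{n+1})$ by deleting $y_i$.

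Substituting this into $\sum_i(g_a)_{i,j}(g_a)_{i,k}$ and using Lemma~\ref{lemma: zonal SF} in the form $\phi_p = \binom{n+1}{p}^{-1}e_p(y)$, consistent with the convention $\phi_0=\phi_{n+1}=1$, all binomial coefficients cancel and the theorem becomes equivalent to the symmetric function identity
\[
\sum_{i=1}^{n+1} y_i\, e_{j-1}(y\setminus y_i)\, e_{k-1}(y\setminus y_i) \;=\; \sum_{r=0}^{\min(n+1-k,\,j-1)}(k+1-j+2r)\, e_{k+r}(y)\, e_{j-1-r}(y)
\]
in the variables $y_1,\ldots,y_{n+1}$, now free of the constraint $\prod y_i=1$.

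To prove this identity I would multiply both sides by $z^j w^k$ and sum over $j,k\ge 1$. Setting $E(z)=\prod_l(1+y_lz)$ and $Q(z)=\sum_l(1+y_lz)^{-1}$, a short partial-fractions calculation gives that the left-hand generating function equals $zw\,E(z)E(w)\,\tfrac{Q(z)-Q(w)}{w-z}$. For the right-hand one, a geometric summation of the inner index combined with the antisymmetry $\sum_{a,b}(a-b)e_ae_b z^{a+b}=0$ under $a\leftrightarrow b$ yields $\tfrac{zw}{w-z}\bigl[wE(z)E'(w) - zE(w)E'(z)\bigr]$. Equality of these series then reduces to the single algebraic relation
\[
E(z)Q(z)\;=\;(n+1)E(z) - zE'(z),
\]
which follows by applying the logarithmic derivative to $E(z)$.

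The principal technical point is the combinatorial collapse $J\setminus\{i\}=K^c$, which turns the apparently complicated double sum for $(g_a)_{i,m}$ into a single elementary symmetric polynomial; once this is isolated, the rest is a structured but routine generating-function manipulation.
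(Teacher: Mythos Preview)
Your proof is correct. The reduction to the symmetric function identity
\[
\sum_{i=1}^{n+1} y_i\, e_{j-1}(y\setminus y_i)\, e_{k-1}(y\setminus y_i) \;=\; \sum_{r}(k+1-j+2r)\, e_{k+r}(y)\, e_{j-1-r}(y)
\]
is exactly the paper's reduction (your collapse $J\setminus\{i\}=K^c$ is the paper's equation \eqref{eq: rewritten coef Phi_0}, and the cancellation of binomials matches the paper's passage to $\widetilde\phi_m$). Where you diverge is in proving this identity. The paper introduces $e_p^{(i)}=\partial_{t_i}e_{p+1}$, proves the single-variable generating-function Lemma~\ref{lem:reducedifference}, and then obtains the result as a telescoping sum via Proposition~\ref{prop: equality of expression in symmetric functions}, which is stated for general partial sums $\sum_{r=a}^b(N-2r)e_{N-r}e_r$. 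You instead pass directly to the bivariate generating function in $z,w$, evaluate both sides in closed form using partial fractions and the geometric summation, and reduce equality to the logarithmic-derivative relation $E(z)Q(z)=(n+1)E(z)-zE'(z)$.

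Both arguments are generating-function based, but yours is more direct for the theorem as stated: it avoids the auxiliary lemma and the telescoping step entirely. The paper's route, on the other hand, isolates Proposition~\ref{prop: equality of expression in symmetric functions} as a standalone identity valid for arbitrary summation ranges, which is a small bonus. One minor point worth making explicit in your write-up: your generating-function sum runs over all $j,k\ge 1$, not just $j\le k$, so you are implicitly using that the identity is symmetric under $j\leftrightarrow k$; this holds because the extra terms in the range $a\in\{j,\ldots,k-1\}$ cancel in pairs by the antisymmetry $(a-b)e_ae_b+ (b-a)e_be_a=0$, but it deserves a sentence.
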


\begin{proof}
Let $\calJ_{j}^{(i)}=\{J\in\calJ_{j}\mid i\in J\}$. In view of Remark \ref{rk: flip for W} it is sufficient to show
\begin{equation}\label{eq: weight in phi}
\sum_{i=1}^{n+1}t_{i}^{2}\sum_{(J,K)\in\calJ_{j}^{(i)}\times\calJ_{k}^{(i)}}(t^{J\backslash\{i\}})^{2}(t^{K\backslash\{i\}})^{2}=
\sum_{r=0}^{\min(n+1-k,j-1)}(k+1-j+2r)\widetilde{\phi}_{k+r}\widetilde{\phi}_{j-1-r},
\end{equation}
where $\widetilde{\phi}_{m}=\binom{n+1}{m}\phi_{m}$ is the elementary symmetric function evaluated at $(t_1^2,\cdots, t_m^2)$.
This equality follows from the more general result in Proposition \ref{prop: equality of expression in symmetric functions} that we prove below. 
The specialization that yields \eqref{eq: weight in phi} is discussed below the proof of Proposition \ref{prop: equality of expression in symmetric functions}.
\end{proof}

To formulate Proposition \ref{prop: equality of expression in symmetric functions} we use the notation of \cite[\S 1.2]{Macd} 
$$e_r ( t_1,\cdots, t_{n+1}) =\sum_{1\leq j_1<j_2<\cdots j_r\leq n+1} t_{i_1}t_{i_2}\cdots t_{j_r},\quad 0 \leq r  \leq n+1$$
for the elementary symmetric functions, with the convention $e_0( t_1,\cdots, t_{n+1})=1$. The same notation is used in the proof of Lemma \ref{lem:barycenter}. 
For the proof of Proposition \ref{prop: equality of expression in symmetric functions} we do not need to assume that $e_{n+1}(t_1,\cdots , t_{n+1})=t_1\cdots t_{n+1}$ equals $1$. 
The generating function for the elementary symmetric functions is given by
\begin{equation}\label{eq:genfunelemtarysymmf}
\sum_{r=0}^{n+1} e_r ( t_1,\cdots, t_{n+1}) z^r = \prod_{i=1}^{n+1} (1+t_iz).
\end{equation}
To deal with the functions on the right hand side of \eqref{eq: rewritten coef Phi_0} we define
$$e_{p}^{(i)} ( t_1,\cdots,t_{n+1}) = \frac{\del}{\del t_{i}}e_{p+1}(t_{1},\ldots,t_{n+1}), \qquad 
\sum_{r=0}^{n} e^{(i)}_r ( t_1,\cdots, t_{n+1}) z^r = z \prod_{\stackrel{\scriptstyle{j=1}}{j\not=i}}^{n+1} (1+t_jz). $$
Applying $z\frac{d}{dz}$ (Euler operator) to \eqref{eq:genfunelemtarysymmf} and comparing the coefficients gives
\begin{equation}\label{eq:dergenfunelemtarysymmf}
 r e_r  = \sum_{i=1}^{n+1} t_i e_{r-1}^{(i)} 
\end{equation}
for $r\geq 1$ and for $r=0$ we interpret the right hand as zero by the convention that $e_{-k}=0$ for $k\in \mathbb{N}\setminus\{0\}$. 
We also follow the convention that $e_k=0$ for $k>n+1$.
For $0\leq r \leq N$ we have by \eqref{eq:dergenfunelemtarysymmf} 
\begin{gather}\label{eq:onetermconvterm}
(N-2r) e_{N-r} e_r = (N-r)e_{N-r}e_r - e_{N-r} re_r  =
\sum_{i=1}^{n+1} t_i \left( e_{N-r-1}^{(i)}e_r - e_{N-r} e_{r-1}^{(i)}\right) 
\end{gather}
which we want to rewrite as a telescoping sum. 

\begin{lemma}\label{lem:reducedifference}
$e^{(i)}_{m-1}e_k - e^{(i)}_{k-1} e_m = e^{(i)}_{m-1}e^{(i)}_k - e^{(i)}_{k-1} e^{(i)}_m$.
\end{lemma}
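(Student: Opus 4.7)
The plan is to exploit the elementary identity relating $e_p$ and $e_p^{(i)}$ that comes from splitting subsets of $\{1,\ldots,n+1\}$ according to whether they contain the index $i$ or not. Concretely, every subset of size $p$ either omits $i$ (contributing to $e_p^{(i)}$) or contains $i$, in which case removing $i$ gives a subset of size $p-1$ not containing $i$ (contributing $t_i \cdot e_{p-1}^{(i)}$). This yields
\begin{equation*}
e_p = e_p^{(i)} + t_i\, e_{p-1}^{(i)}, \qquad \text{equivalently} \qquad e_p - e_p^{(i)} = t_i\, e_{p-1}^{(i)}.
\end{equation*}
This can also be read off immediately from the generating function $\prod_j (1+t_j z) = (1+t_i z)\prod_{j\neq i}(1+t_j z)$.

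With this identity in hand I would simply rearrange the statement of the lemma. Moving the $e^{(i)}$-only products to one side gives
\begin{equation*}
e^{(i)}_{m-1}\bigl(e_k - e^{(i)}_k\bigr) = e^{(i)}_{k-1}\bigl(e_m - e^{(i)}_m\bigr),
\end{equation*}
and substituting $e_r - e_r^{(i)} = t_i e_{r-1}^{(i)}$ on both sides reduces the equality to $t_i\, e^{(i)}_{m-1} e^{(i)}_{k-1} = t_i\, e^{(i)}_{k-1} e^{(i)}_{m-1}$, which is trivial. There is essentially no obstacle here: the lemma is a one-line consequence of the splitting identity, and the only thing to be careful about is boundary behaviour when $k=0$ or $m=0$, where $e_{-1}^{(i)}=0$ by convention makes both sides collapse to $0$ consistently.

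The purpose of writing the identity in this symmetrized form is presumably to prepare for the telescoping rewrite of \eqref{eq:onetermconvterm}: on the right-hand side of that display, the factor $e^{(i)}_{N-r-1}e_r - e^{(i)}_{r-1}e_{N-r}$ becomes $e^{(i)}_{N-r-1}e^{(i)}_r - e^{(i)}_{r-1}e^{(i)}_{N-r}$, and the latter is visibly of telescoping shape when summed over $r$. So the real work lies downstream; the lemma itself is just the algebraic bookkeeping step that turns mixed $e$/$e^{(i)}$ products into pure $e^{(i)}$ products.
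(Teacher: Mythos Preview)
Your proof is correct and considerably more direct than the paper's. The paper proceeds by writing down the two-variable generating function
\[
\sum_{m,k} \bigl(e^{(i)}_{m-1}e_k - e^{(i)}_{k-1}e_m\bigr) z^m w^k
\]
as a product of the generating functions for $e_r$ and $e^{(i)}_r$, simplifies, and then re-expands to recognise the generating function for $e^{(i)}_{m-1}e^{(i)}_k - e^{(i)}_{k-1}e^{(i)}_m$. Your argument bypasses this by invoking the recursion $e_p = e_p^{(i)} + t_i e_{p-1}^{(i)}$ directly (which is exactly what the factorisation $\prod_j(1+t_jz) = (1+t_iz)\prod_{j\neq i}(1+t_jz)$ encodes), and reduces the identity to the commutativity of multiplication. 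Both approaches rest on the same underlying relation between $e_p$ and $e_p^{(i)}$; the paper effectively rediscovers it inside the generating-function manipulation, whereas you isolate it at the outset. Your version is shorter and makes the cancellation mechanism transparent, at no cost in rigour.
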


\begin{proof} We consider a generating function for the left hand side,
\begin{multline*}
\sum_{m=1}^{n+1} \sum_{k=1}^{n+1}\left( e^{(i)}_{m-1}e_k - e^{(i)}_{k-1} e_m\right) z^m w^k  =\\
z \prod_{\stackrel{\scriptstyle{l=1}}{l\not=i}}^{n+1} (1+t_lz) \left(\prod_{p=1}^{n+1} (1+t_pw) -1\right) 
- w \prod_{\stackrel{\scriptstyle{p=1}}{p\not=i}}^{n+1} (1+t_pw) \left(\prod_{l=1}^{n+1} (1+t_lz) -1\right) .
\end{multline*}
Working out the brackets, taking out the common factor in the double products, and simplifying gives 
products that are generating functions. This then equals 
\begin{gather*}
(z-w) \sum_{r=0}^{n} e^{(i)}_r z^r \sum_{s=0}^{n} e^{(i)}_s w^s - \sum_{r=0}^{n} e^{(i)}_r z^{r+1} +  \sum_{s=0}^{n} e^{(i)}_s w^{s+1} = \\
\sum_{r=1}^{n+1} \sum_{s=1}^{n} e^{(i)}_{r-1}  e^{(i)}_s z^{r} w^s 
- \sum_{r=1}^{n} \sum_{s=1}^{n+1} e^{(i)}_r e^{(i)}_{s-1} z^r  w^s
\end{gather*}
and comparing coefficients shows the result. 
\end{proof}

Applying Lemma \ref{lem:reducedifference} to \eqref{eq:onetermconvterm} proves the following. 

\begin{proposition}\label{prop: equality of expression in symmetric functions} 
For all $N,r\in\bbN_{0}$ with $r\le N$ the following identity holds,
\begin{gather*}
\sum_{r=a}^b (N-2r) e_{N-r} e_r = \sum_{i=1}^{n+1} t_i \left( e_{N-b-1}^{(i)}e^{(i)}_b - e^{(i)}_{N-a} e_{a-1}^{(i)}\right). 
\end{gather*}
\end{proposition}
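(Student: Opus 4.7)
The statement is a telescoping identity, and the plan is to string together two pieces that are already in place: the single-term identity \eqref{eq:onetermconvterm} and Lemma \ref{lem:reducedifference}.

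First, I would start from \eqref{eq:onetermconvterm}, which rewrites each summand on the left as
\[
(N-2r) e_{N-r} e_r = \sum_{i=1}^{n+1} t_i \bigl( e^{(i)}_{N-r-1} e_r - e_{N-r} e^{(i)}_{r-1} \bigr).
\]
The right-hand side still mixes $e$'s and $e^{(i)}$'s, so the next move is to upgrade it to an expression purely in $e^{(i)}$'s. Applying Lemma \ref{lem:reducedifference} with $m=N-r$ and $k=r$ turns the bracket into
\[
e^{(i)}_{N-r-1} e^{(i)}_r - e^{(i)}_{N-r} e^{(i)}_{r-1}.
\]
At this point the $r$-th summand depends only on the $e^{(i)}$-family.

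Now I would introduce the shorthand $g^{(i)}_r = e^{(i)}_{N-r} e^{(i)}_{r-1}$, so that $e^{(i)}_{N-r-1} e^{(i)}_r = g^{(i)}_{r+1}$, and observe
\[
(N-2r) e_{N-r} e_r = \sum_{i=1}^{n+1} t_i \bigl( g^{(i)}_{r+1} - g^{(i)}_r \bigr).
\]
Summing this over $r$ from $a$ to $b$ telescopes to $\sum_i t_i (g^{(i)}_{b+1} - g^{(i)}_a)$, which unpacks to exactly the right-hand side of the proposition.

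There is no real obstacle here beyond bookkeeping of indices and the boundary conventions $e_{-k}=0$, $e_k=0$ for $k>n+1$ (and likewise for $e^{(i)}_k$), which one should check at the endpoints $r=a$ and $r=b$ to ensure the telescoping collapses cleanly and no spurious terms appear. Once the index conventions are respected, the two-step reduction—first apply \eqref{eq:onetermconvterm}, then Lemma \ref{lem:reducedifference}, then telescope—produces the identity directly.
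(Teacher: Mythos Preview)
Your proposal is correct and follows exactly the paper's approach: the paper's one-line proof simply says ``Applying Lemma \ref{lem:reducedifference} to \eqref{eq:onetermconvterm} proves the following,'' and you have spelled out precisely that application together with the telescoping step. Your shorthand $g^{(i)}_r$ makes the telescoping transparent, but there is no difference in substance.
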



Now pick $a=k$, $b = k + \min(n+1-k,j-1)$, $N=k+j-1$, and put $r=s+k$. Proposition \ref{prop: equality of expression in symmetric functions} yields
\begin{gather*}
\sum_{s=0}^{\min(n+1-k,j-1)} (j-k-1-2s) e_{j-1-r} e_{k+s} = 
- \sum_{i=1}^{n+1} t_i  e^{(i)}_{j-1} e_{k-1}^{(i)}
\end{gather*}
since $k+j-2-(k + \min(n+1-k,j-1))<0$ the corresponding term vanishes. This yields \eqref{eq: weight in phi} after
taking all arguments squared.

We now switch back to the group situation, and we assume that $t_1\cdots t_{n+1}=1$

\begin{proposition}
The function $a\mapsto\det(g_{a})$ is alternating in the sense that $\det(g_{w(a)})=\det(w)\det(g_{a})$ and 
$$
\det(g_{a})=c \prod_{i<j}(t^{2}_{i}-t^{2}_{j}), \qquad c=\prod_{m=0}^{n}\binom{n}{m-1}^{-1}.
$$
\end{proposition}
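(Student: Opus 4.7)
The plan is to read off the matrix entries of $g_{a}$ explicitly from Theorem~\ref{theorem: g_a} and \eqref{eq: rewritten coef Phi_0}, and then reduce the determinant to a classical Vandermonde evaluation by extracting a row diagonal and column constants.

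First I would combine Theorem~\ref{theorem: g_a} with the identity \eqref{eq: rewritten coef Phi_0}, and use the constraint $t_{1}\cdots t_{n+1}=1$ on $A$, to write every entry of $g_{a}$ in the factored form
\[
(g_{a})_{i,m}=t_{i}\,\binom{n}{m-1}^{-1}\,e_{m-1}^{(i)},\qquad e_{k}^{(i)}:=e_{k}\bigl(t_{1}^{2},\ldots,\widehat{t_{i}^{2}},\ldots,t_{n+1}^{2}\bigr),
\]
for $i,m\in\{1,\ldots,n+1\}$. From this the alternating property is transparent: the Weyl group $W(\Sigma)=S_{n+1}$ acts on $A_{c}$ by permuting the coordinates $t_{j}$, and since $e_{m-1}^{(i)}$ is symmetric in the squared variables other than $t_{i}^{2}$, the substitution $t_{j}\mapsto t_{w^{-1}(j)}$ carries $(g_{a})_{i,m}$ to $(g_{a})_{w^{-1}(i),m}$. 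Thus $g_{w(a)}=P_{w}\,g_{a}$ for the permutation matrix of $w$, and taking determinants gives $\det(g_{w(a)})=\det(w)\det(g_{a})$.

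For the closed formula I would factor $g_{a}=\diag(t_{1},\ldots,t_{n+1})\cdot\bigl(\binom{n}{m-1}^{-1}e_{m-1}^{(i)}\bigr)_{i,m}$. The diagonal factor has determinant $t_{1}\cdots t_{n+1}=1$, so it suffices to compute the determinant of the second factor; pulling the constant $\binom{n}{m-1}^{-1}$ out of column $m$ leaves the task of evaluating $\det\bigl(e_{m-1}^{(i)}\bigr)_{i,m}$. Setting $x_{i}=t_{i}^{2}$, the generating function identity $(1+x_{i}z)\sum_{k\ge0}e_{k}^{(i)}z^{k}=\sum_{k\ge0}e_{k}z^{k}$, with $e_{k}=e_{k}(x_{1},\ldots,x_{n+1})$, gives the recursion
\[
e_{m-1}^{(i)}=\sum_{\ell=0}^{m-1}(-x_{i})^{\ell}e_{m-1-\ell},
\]
which realises this matrix as the product of the Vandermonde $V'_{i,\ell}=(-x_{i})^{\ell-1}$ with an upper unitriangular matrix. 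Its determinant is therefore $\det(V')=\prod_{i<j}(x_{i}-x_{j})=\prod_{i<j}(t_{i}^{2}-t_{j}^{2})$, and the extracted column factors combine to the stated constant.

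The main obstacle is purely bookkeeping: writing $(g_{a})_{i,m}$ in the factored form above so that a row diagonal and column constants can be extracted, and then recognising the reduced matrix as a Vandermonde up to a unipotent upper triangular factor. The heart of the argument is a one-line Vandermonde computation.
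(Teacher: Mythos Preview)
Your proof is correct and takes a genuinely different route from the paper's. The paper argues abstractly: after noting that $\det(g_a)$ is a polynomial in $t_1^2,\ldots,t_{n+1}^2$ (the individual factors $t_i$ from each row multiply to $1$), it observes that this polynomial is alternating and has total degree exactly $\binom{n+1}{2}$, the number of reflections in $S_{n+1}$; then it invokes the general fact (Humphreys, Prop.~3.13(b)) that any such polynomial is a scalar multiple of the Vandermonde $\prod_{i<j}(t_i^2-t_j^2)$, leaving the constant to be identified separately from the explicit form of the entries.

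Your approach is an explicit matrix factorisation: extract the row diagonal $\diag(t_1,\ldots,t_{n+1})$ and the column constants $\binom{n}{m-1}^{-1}$, then recognise the remaining matrix $\bigl(e_{m-1}^{(i)}\bigr)_{i,m}$ as a Vandermonde times a unipotent upper-triangular factor via the generating-function recursion. This is more elementary (no appeal to reflection-group theory) and delivers the constant $c$ automatically rather than as a separate step. The paper's argument is shorter to state and generalises more readily to other reflection groups, but yours is self-contained and makes the structure of $g_a$ completely transparent.
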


\begin{proof}
From (\ref{eq: rewritten coef Phi_0}) and $t_1\cdots t_{n+1}=1$ we see that the entries $g_{a}$ are regular functions in the variables 
$t^{2}_{1},\ldots,t^{2}_{n+1}$. The degree of this function in these variables is equal to the number of reflections in $S_{n+1}$ and 
this function is alternating by definition. Following \cite[Prop.3.13(b)]{Humphreys} we conclude that it is a multiple of the Jacobian of the basic invariants. 
The multiple is calculated using Theorem \ref{theorem: g_a}.
\end{proof}


\subsection{Irreducibility of the weight}

Now we study the irreducibility of the weight $W^{\omega_{1}}_{\pol}$. We say that the matrix weight $W$, i.e. a function defined on a set $S$ taking values in 
the self-adjoint matrices of size $N\times N$, reduces to weights of smaller size if there exists a constant matrix $M$ and weights $W_1,\ldots,W_k$ of lower size  
such that $MW(x)M^\ast$ is equal to the block diagonal matrix $\diag(W_1(x),\ldots, W_k(x))$ for all $x\in S$. 
In such a case, the real vector space
$$
{\mathcal{A}}_{W}= \{ Y \in\End(\bbC^{N}) \mid YW(x) = W(x)Y^\ast, \quad \text{for all }x\in S\},
$$
is non-trivial.  If the subspace $A_h$ of self-adjoint elements in the commutant algebra
$$
A_W= \{Y \in \End(\bbC^{N}) \mid YW(x) = W(x)Y,\quad \text{for all }x\in S\},
$$
is nontrivial, then $W$ is reducible via a unitary matrix $M$. In \cite{KR15} we prove that 
$\mathcal{A}_{W}$ is $\ast$-invariant if and only if  $\mathcal{A}_{W}=(A_{W})_h$. 
We will show that, for $N=n+1$ with $n>1$, and $S=\phi(A_c)$,  the weight $W_{\mathrm{pol}}^{\omega_{1}}$ is irreducible by showing that 
$A_{W^{\omega_{1}}_{\mathrm{pol}}}$ is trivial and that $\mathcal{A}_{W^{\omega_{1}}_{\mathrm{pol}}}$ is $\ast$-invariant.

\begin{theorem}
\label{thm:commutator}
For $n\geq 2$, the commutant algebra $A_{W^{\omega_{1}}_{\mathrm{pol}}}$ is trivial, i.e. it consists of multiples of the identity matrix. Moreover, the real vector space $\mathcal{A}_{W^{\omega_{1}}_{\mathrm{pol}}}$ is $\ast$-invariant.
\end{theorem}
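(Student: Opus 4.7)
The plan is to reduce both statements to probing $W := W_{\pol}^{\omega_1}$ at the single interior point $\phi=0\in\phi(A_c)$ (Lemma~\ref{lem:barycenter}) and its first-order Taylor coefficient $W_1:=\partial W/\partial\phi_1\vert_0$. Since $W$ is polynomial in $\phi$ and $\phi(A_c)\subset\bbC^n$ is totally real of full real dimension, the relations defining $A_W$ and $\mathcal{A}_W$ extend to polynomial identities in $\phi_1,\ldots,\phi_n$, hence to every Taylor coefficient of $W$ at $0$. Evaluating Theorem~\ref{thm:formula_weight} at $\phi=0$ (with $\phi_0=\phi_{n+1}=1$ and $\phi_i=0$ for $1\le i\le n$), only the term $r=n+1-k=j-1$ survives, giving $W_0:=W(0)=\diag(d_1,\ldots,d_{n+1})$ with $d_p=(n+1)/\binom{n}{p-1}^2$. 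The strict unimodality of $\binom{n}{\cdot}$ yields $d_p=d_q$ iff $q\in\{p,n+2-p\}$. For $W_1$, reading off the coefficient of $\phi_1$ in Theorem~\ref{thm:formula_weight} and using the anti-diagonal flip $W_{p,q}=W_{n+2-q,n+2-p}$ from Remark~\ref{rk: flip for W} for entries in the upper region $p+q<n+2$, the only nonzero entries lie at the super-diagonal positions $(p,p+1)$ with value $\alpha_p=n(n+1)/(\binom{n}{p-1}\binom{n}{p})>0$ for $1\le p\le n$ and at the corner $(n+1,1)$ with value $\alpha_{n+1}=n+1>0$. Thus $W_1$ is a scaled cyclic shift with characteristic polynomial $x^{n+1}-\prod_p\alpha_p$, so $W_1$ has simple spectrum.

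For the triviality of $A_W$, any $Y\in A_W$ commutes with $W_1$, which has simple spectrum, so $Y$ is a polynomial in $W_1$: $Y=\sum_{k=0}^n c_kW_1^k$. The entry $(Y)_{q,p}$ is a nonzero scalar multiple of $c_{(p-q)\bmod(n+1)}$. Commuting with $W_0$ also requires $(Y)_{q,p}=0$ whenever $q\notin\{p,n+2-p\}$; for any $k\in\{1,\ldots,n\}$ this would force $2p\equiv k+1\pmod{n+1}$ to hold for every $p\in\{1,\ldots,n+1\}$, which is impossible as soon as $n\ge 2$. Therefore $c_k=0$ for $k\ne 0$ and $Y\in\bbC I$.

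For the $\ast$-invariance of $\mathcal{A}_W$, I will show $\mathcal{A}_W\subset\bbR I$ directly (this subspace is obviously $\ast$-invariant). The equation $YW_0=W_0Y^*$ gives $Y_{p,p}\in\bbR$ and $Y_{p,q}=(d_p/d_q)\overline{Y_{q,p}}$; substituting into the entry-wise form of $YW_1=W_1Y^*$ produces the recursion $Y_{r+1,s+1}=(d_s/d_r)(\alpha_r/\alpha_s)Y_{r,s}$ valid for $1\le r,s\le n$. On the diagonal $s=r$ this forces all $Y_{p,p}$ to coincide, and on each super-diagonal $s-r=m\ge 1$ it expresses $Y_{r,r+m}$ as a positive real multiple of $Y_{1,m+1}$. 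The wrap-around equations at the $(n+1,1)$-corner of $W_1$ couple each super-diagonal to its anti-partner of step $n+1-m$, producing real numbers $C_m$, $C_{n+1-m}$ with $Y_{1,m+1}=C_m\overline{Y_{1,n+2-m}}$ and $Y_{1,n+2-m}=C_{n+1-m}\overline{Y_{1,m+1}}$; composing yields $Y_{1,m+1}=C_mC_{n+1-m}Y_{1,m+1}$. Using the palindromic symmetries $d_p=d_{n+2-p}$ and $\alpha_p=\alpha_{n+1-p}$, the $\alpha$-ratios entering $C_mC_{n+1-m}$ cancel pairwise while the $d$-ratios telescope, and a short calculation with the explicit formulas for $d_p$ and $\alpha_p$ produces $C_mC_{n+1-m}=1/n^2$. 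Since $1/n^2\ne 1$ for $n\ge 2$, we obtain $Y_{1,m+1}=0$ for every $m\ge 1$; propagating via the recursion and using $Y_{p,q}=(d_p/d_q)\overline{Y_{q,p}}$ to handle sub-diagonal entries eliminates every off-diagonal entry of $Y$, so $Y$ is a real scalar multiple of $I$.

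The central technical point is the identity $C_mC_{n+1-m}=1/n^2$, which quantifies the ``monodromy'' picked up by going once around the scaled cyclic shift $W_1$ along the $m$-th super-diagonal, and whose distinction from $1$ is exactly what singles out $n\ge 2$. The fact that $C_mC_{n+1-m}=1$ precisely when $n=1$ is consistent with the known reducibility of the weight in that case (cf.~\cite{KvPR,KvPR2}); all remaining steps are careful bookkeeping from Theorem~\ref{thm:formula_weight}.
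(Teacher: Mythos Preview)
Your approach is sound and genuinely different from the paper's. Both arguments exploit the constant coefficient $W_0=W(0)$ and the $\phi_1$-coefficient $W_1$ of $W_{\pol}^{\omega_1}$ (these are the paper's $W_{(0)}$ and $W_{(\phi_1)}$), but the paper in addition invokes the whole family $W_{(i)}$ of coefficients of $\phi_i\phi_{n+1-i}$ for $i=1,\ldots,\lfloor(n+1)/2\rfloor$, using their vanishing corners to peel off successive layers of $Y$ until only diagonal and anti-diagonal entries survive, and only then appeals to $W_{(\phi_1)}$. Your argument replaces this iterative reduction with the single structural observation that $W_1$ is a scaled cyclic permutation matrix with simple spectrum: for $A_W$ this immediately gives $Y\in\bbC[W_1]$, after which commutation with the diagonal $W_0$ finishes; for $\mathcal{A}_W$ you propagate along each cyclic diagonal and close up around the $(n+1,1)$-corner. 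This is more economical---only two coefficient matrices suffice---at the price of slightly heavier bookkeeping.

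Two computational slips in the $\mathcal{A}_W$ part should be corrected, though neither damages the conclusion. First, combining $YW_1=W_1Y^\ast$ at position $(p,q)$ for $1\le p\le n$, $2\le q\le n+1$ with $YW_0=W_0Y^\ast$ gives
\[
Y_{r+1,s+1}=\frac{d_{r+1}}{d_{s+1}}\cdot\frac{\alpha_s}{\alpha_r}\,Y_{r,s}\qquad(1\le r,s\le n),
\]
not $(d_s/d_r)(\alpha_r/\alpha_s)$ as you wrote; on the diagonal both versions collapse to $1$, so the equality of the $Y_{p,p}$ is unaffected. Second, carrying the correct recursion through with $d_p=(n+1)/\binom{n}{p-1}^2$, $\alpha_p=n(n+1)/(\binom{n}{p-1}\binom{n}{p})$ for $p\le n$ and $\alpha_{n+1}=n+1$, the $\alpha$-ratios in the propagation product indeed cancel by the palindromy $\alpha_p=\alpha_{n+1-p}$ and the $d$-ratios telescope, but the monodromy comes out as $C_mC_{n+1-m}=n^2$, not $1/n^2$ (e.g.\ for $n=2$, $m=1$ one finds $C_1=4$, $C_2=1$). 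Since $n^2\ne 1$ for $n\ge 2$ (and $=1$ precisely when $n=1$, consistent with the known reducibility there), your conclusion $Y_{1,m+1}=0$ and hence $\mathcal{A}_W\subset\bbR I$ stands.
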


\begin{proof}
For $i=1,\ldots,\lfloor \tfrac{n+1}{2}\rfloor$, we denote by $W_{(i)}$ the coefficient of $\phi_i\phi_{n+1-i}$ in $W^{\omega_{1}}_{\mathrm{pol}}$. It follows from Theorem \ref{thm:formula_weight} that $W_{(i)}$ is given by
\begin{equation}
\label{eq:weight_W(i)}
W_{(i)}=\sum_{k=i}^{n-i} (n+1-2i) {\binom{n+1}{i}\binom{n+1}{n+1-i}} {{\binom{n}{n+1-k}}^{-1} {\binom{n}{k-1}}^{-1} }\, E_{k,k},
\end{equation}
where $E_{k,j}$ denotes the matrix with a one in the $(k,j)$-th entry and zero elsewhere. Note that the first and last $i$ diagonal entries of $W_{(i)}$ are zero.

First we prove that the commutant algebra is trivial. Let $Y\in A_{W^{\omega_{1}}_{\mathrm{pol}}}$. Since the spherical functions $\phi_1,\ldots,\phi_n$ are algebraically independent, it follows from Theorem \ref{thm:formula_weight} that $YW_{(i)}-W_{(i)}Y=0$ for all $i=1,\ldots,\lfloor \tfrac{n+1}{2}\rfloor$. If we set $i=1$ in this equation, since $(W_{(1)})_{11}=(W_{(1)})_{n+1,n+1}=0$, the first and last rows and columns give
$$Y_{1j}(W_{(1)})_{jj}=0,\quad Y_{nj}(W_{(1)})_{jj}=0,\quad Y_{j1}(W_{(1)})_{jj}=0,\quad Y_{jn}(W_{(1)})_{jj}=0, \qquad  j=2,\dots,n,$$
which implies $Y_{1j}=Y_{nj}= Y_{j1} = Y_{jn}=0$ for $j=2,\ldots n$ by \eqref{eq:weight_W(i)}.
Repeating this process for $i=2,\ldots,\lfloor \tfrac{n+1}{2}\rfloor$ we obtain that the only possible non-zero entries of $Y$ are of the form $Y_{kk}$ and $Y_{k,n+2-k}$ for $k=1,\ldots,n+1$. The coefficient of $\phi_1$ in $W_{\mathrm{pol}}$ is the matrix
$$W_{(\phi_1)}=\sum_{k=1}^n {n(n+1)}{\binom{n}{n+1-k}^{-1} \binom{n}{k}^{-1}} \, E_{k,k+1} + (n+1)E_{n+1,1}.$$
The $(k,k+1)$-th entry of $YW_{(\phi_1)}-W_{(\phi_1)}Y=0$ gives
$$(Y_{kk}-Y_{k+1,k+1}) {n(n+1)}{\binom{n}{n+1-k}^{-1}\binom{n}{k}^{-1}}=0,$$
which implies that $Y_{kk}=Y_{k+1,k+1}$ for $k=1,\ldots,n$. The $(n+1-k,k)$-th entries of  $YW_{(\phi_1)}-W_{(\phi_1)}Y=0$ give that $Y$ is a multiple of the identity. This proves that the commutant algebra of $W$ is trivial.

Now we prove the $\ast$-invariance of $\mathcal{A}_{W^{\omega_{1}}_{\mathrm{pol}}}$. For $Y\in \mathcal{A}_{W^{\omega_{1}}_{\mathrm{pol}}}$, we will show that $Y^\ast\in \mathcal{A}_{W^{\omega_{1}}_{\mathrm{pol}}}$. The $(k,j)$-th entry of the equation $YW_{(0)}=W_{(0)}Y^\ast$ gives
\begin{equation}
\label{eq:entry_commA}
Y_{k,j}= {\binom{n}{n+1-k}\binom{n}{k-1}} {\binom{n}{n+1-j}^{-1}\binom{n}{j-1}^{-1}} \overline Y_{j,k}.
\end{equation}
It is immediate from \eqref{eq:entry_commA} that the diagonal elements $Y_{k,k}$ are real and that $Y_{k,n+2-k}=\overline Y_{n+2-k,k}$, for $k=1,\ldots,n+1$. Now it its enough to prove that $Y_{k,j}=0$ if $k\neq j$ or $k\neq n+2-j$. For this we proceed as for the commutant algebra. Since $(W_{(1)})_{11}=(W_{(1)})_{n+1,n+1}=0$, the first and last rows of the equation $YW_{(1)}=W_{(1)}Y^\ast$ give
$$Y_{1j}(W_{(1)})_{jj}=0,\qquad Y_{nj}(W_{(1)})_{jj}=0,\qquad j=2,\ldots n.$$
This implies $Y_{1j}=Y_{nj}=0$ for $j=2,\ldots n$, since $(W_{(1)})_{kk}\neq 0$. The first row and column of the equation $YW_{(0)}=W_{(0)}Y^\ast$ implies now that $Y_{kn}=Y_{j1}=0$ for $j=2,\ldots n$. If we proceed in the same way for the equation $YW_{(i)}=W_{(i)}Y^\ast$ with $i>1$ we obtain that $Y_{kj}=0$ unless $k=j$ or $k=n+2-j$. This completes the proof of the theorem.
\end{proof}

\begin{corollary}
The matrix weight $W^{\omega_{1}}_{\mathrm{pol}}$ is indecomposable.
\end{corollary}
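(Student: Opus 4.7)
The plan is a short derivation from Theorem \ref{thm:commutator} together with the discussion preceding it. Recall the setup: reducibility of the matrix weight $W$ on $S=\phi(A_c)$ is detected by the real vector space $\mathcal{A}_{W}$, since any decomposition $MW(x)M^{\ast} = \diag(W_1(x),\ldots,W_k(x))$ produces non-scalar self-adjoint elements in $\mathcal{A}_W$ (e.g.\ the pullback under $M$ of any non-scalar block-diagonal self-adjoint matrix). The converse direction of this characterization is the result of \cite{KR15} quoted in the paper: $\mathcal{A}_{W}$ is $\ast$-invariant iff $\mathcal{A}_{W}=(A_{W})_h$, so once $\ast$-invariance is in hand the only way $W$ can fail to be irreducible is for $A_W$ itself to contain a non-scalar self-adjoint element.

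First I would invoke the second conclusion of Theorem \ref{thm:commutator}: $\mathcal{A}_{W^{\omega_{1}}_{\mathrm{pol}}}$ is $\ast$-invariant. By the \cite{KR15} criterion recalled above this gives
\[
\mathcal{A}_{W^{\omega_{1}}_{\mathrm{pol}}} \;=\; \bigl(A_{W^{\omega_{1}}_{\mathrm{pol}}}\bigr)_h.
\]
Next I would apply the first conclusion of Theorem \ref{thm:commutator}, which says $A_{W^{\omega_{1}}_{\mathrm{pol}}} = \bbC\, I$. The self-adjoint part of $\bbC\, I$ is $\bbR\, I$, hence $\mathcal{A}_{W^{\omega_{1}}_{\mathrm{pol}}} = \bbR\, I$. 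A non-trivial block decomposition $M W^{\omega_{1}}_{\mathrm{pol}}(x) M^{\ast} = \diag(W_1(x),\ldots,W_k(x))$ would force a non-scalar element into $\mathcal{A}_{W^{\omega_{1}}_{\mathrm{pol}}}$, contradicting this computation; therefore no such $M$ exists and $W^{\omega_{1}}_{\mathrm{pol}}$ is indecomposable.

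There is essentially no obstacle to this argument—the corollary is a packaging of the two halves of Theorem \ref{thm:commutator} via the general criterion from \cite{KR15}. The only care required is to spell out the equivalence between indecomposability of the weight and the triviality of $\mathcal{A}_{W^{\omega_{1}}_{\mathrm{pol}}}$ as a real vector space, so that both pieces of Theorem \ref{thm:commutator} are genuinely needed: $\ast$-invariance reduces $\mathcal{A}_W$ to $(A_W)_h$, and triviality of $A_W$ then reduces $(A_W)_h$ to $\bbR\, I$.
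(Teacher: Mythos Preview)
Your proposal is correct and follows essentially the same approach as the paper's proof: use the $\ast$-invariance of $\mathcal{A}_{W^{\omega_{1}}_{\mathrm{pol}}}$ together with the criterion from \cite{KR15} to identify $\mathcal{A}_{W^{\omega_{1}}_{\mathrm{pol}}}$ with $(A_{W^{\omega_{1}}_{\mathrm{pol}}})_h$, then invoke triviality of $A_{W^{\omega_{1}}_{\mathrm{pol}}}$ to conclude $\mathcal{A}_{W^{\omega_{1}}_{\mathrm{pol}}}=\bbR\, I$, hence indecomposability. Your write-up is slightly more explicit about why triviality of $\mathcal{A}_W$ forces indecomposability, but the logic is identical.
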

\begin{proof}
Since the real vector space $\mathcal{A}_{W^{\omega_{1}}_{\mathrm{pol}}}$ is $\ast$-invariant, it follows from \cite[Corollary 2.5]{KR15} that $\mathcal{A}_{W^{\omega_{1}}_{\mathrm{pol}}}$ is the set of self-adjoint elements in the commutant algebra $A_{W^{\omega_{1}}_{\mathrm{pol}}}$. Since the commutant algebra is trivial by Theorem \ref{thm:commutator}, $\mathcal{A}_{W^{\omega_{1}}_{\mathrm{pol}}}$ consists on the real multiples of the identity matrix. Thus $W^{\omega_{1}}_{\mathrm{pol}}$ is indecomposable.
\end{proof}

\section{Differential properties}

Let $G,H$ be as above and $\mu=k\omega_{1}$. Then the center $Z(\lag)\cong Z(\lasl(n+1,\bbC))\otimes Z(\lasl(n+1,\bbC))$ of $U(\lag)$ contains the two Casimir operators $\Omega_{L}=\Omega\otimes1$ and $\Omega_{R}=1\otimes\Omega$, where $\Omega\in Z(\lasl(n+1,\bbC))$ is the Casimir operator of order two. Let $D_{L}^{\mu},D_{R}^{\mu}\in\calD_{\mu}$ denote their images in $\calD(\mu)$ under the map $\calD^{\mu}$, see Subsection \ref{subsection:diffoperatorsMVOP}.

The operators $\Omega_{L}$ and $\Omega_{R}$ act on $V^{G}_{(\lambda_{1},\lambda_{2})}$ by multiplication with the scalars $\gamma(\Omega_{L},\lambda)=|\lambda_{1}+\rho|^{2}-|\rho|^{2}$ and $\gamma(\Omega_{R},\lambda)=|\lambda_{2}+\rho|^{2}-|\rho|^{2}$ respectively, where $\gamma$ is the Harish-Chandra isomorphism. Note that $\gamma_{\mu}=\gamma$ on the image of $Z(\lag)\to\bbD(\mu)$.

We denote the diagonal eigenvalue matrices of $\Omega_{L}$ and $\Omega_{R}$ on the eigenfunction $\Phi^{\mu}_{d}$ by $\Gamma^{\mu}_{L,d}$ and $\Gamma^{\mu}_{R,d}$ respectively. 

The radial part operator $\rad_{\mu}$ respects the degree of differentiation and so does conjugation with $\Phi^{\mu}_{0}$ and changing the variables. Hence the images of $\Omega_{L}$ and $\Omega_{R}$ under $\calD$ are differential operators of order two with matrix-valued polynomials as coefficients. We denote these images by $D^{\mu}_{L}$ and $D^{\mu}_{R}$ respectively.

\begin{lemma}
The differential operator $D^{\mu}_{L}-D^{\mu}_{R}$ has order $\le1$. It has the polynomials $Q^{\mu}_{d}$ as simultaneous eigenfunctions with eigenvalues $\Gamma^{\mu}_{L,d}-\Gamma^{\mu}_{R,d}$.
\end{lemma}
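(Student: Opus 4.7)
The eigenvalue claim is immediate from the construction of $\calD^\mu$: both $D^\mu_L$ and $D^\mu_R$ lie in the commutative algebra $\calD(\mu)$, and by Lemma \ref{lemma: SF sim eigenfunctions} applied to each component of the tuple $\Phi^\mu_d$, they act on $Q^\mu_d$ through the diagonal eigenvalue matrices $\Gamma^\mu_{L,d}$ and $\Gamma^\mu_{R,d}$, respectively; subtracting yields the stated eigenvalue. The substantive content of the lemma is therefore the order bound, and the plan is to show that $\Omega_L - \Omega_R$, viewed as acting on $E^\mu$, is already a first-order differential operator. Since forming the $\mu$-radial part, conjugating by $\Phi^\mu_0$, and changing coordinates from $a$ to $x=\phi(a)$ each preserve the order of a differential operator (these are the standard manipulations from Subsection \ref{subsection:diffoperatorsMVOP}), this will be enough.

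The key algebraic step exploits the product structure $\lag = \lasl_{n+1}\oplus \lasl_{n+1}$. Pick an orthonormal basis $(X_a)$ of $\lasl_{n+1}$ with respect to the Killing form and set $X_a^L = (X_a,0)$, $X_a^R = (0,X_a)$. Since $X_a^L$ and $X_a^R$ come from commuting ideals of $\lag$, they commute in $U(\lag)$, and so
\begin{equation*}
\Omega_L - \Omega_R = \sum_a \bigl( (X_a^L)^2 - (X_a^R)^2\bigr) = \sum_a (X_a^L - X_a^R)(X_a^L + X_a^R) = \sum_a Y_a^- Y_a^+,
\end{equation*}
with $Y_a^- = X_a^L - X_a^R = (X_a,-X_a) \in \lap$ and $Y_a^+ = X_a^L + X_a^R = (X_a,X_a) \in \lah$.

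Next, for any $Y\in \lah$ and $F\in E^\mu$, the right-equivariance $F(gh)=F(g)\pi^H_\mu(h)$ forces $Y(F)(g) = F(g)\,d\pi^H_\mu(Y)$, so the infinitesimal action of $\lah$ on $E^\mu$ is by right-multiplication by constant matrices rather than by genuine derivations. Unwinding the two-step definition of $(Y_a^- Y_a^+)(F)$ then gives
\begin{equation*}
(Y_a^- Y_a^+)(F)(g) = Y_a^-(F)(g) \cdot d\pi^H_\mu(Y_a^+),
\end{equation*}
so $\Omega_L - \Omega_R$ acts on $E^\mu$ as a sum of first-order differential operators, each post-composed with right-multiplication by a constant matrix. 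This yields the order bound. I do not anticipate any serious obstacle: the factorization above is the only genuinely new ingredient, and the transfer to $\calD(\mu)$ via radial parts, conjugation, and change of variables is the same order-preserving procedure that is used throughout the construction.
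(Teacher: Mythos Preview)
Your proof is correct. The factorization $\Omega_L-\Omega_R=\sum_a Y_a^-Y_a^+$ with $Y_a^+\in\lah$ is valid because the two copies of $\lasl_{n+1}$ commute, and your observation that elements of $\lah$ act on $E^\mu$ as right multiplication by constant matrices (via the transformation rule \eqref{eqn: trafo}) is exactly what forces the order down to one. The subsequent transfer to $\calD(\mu)$ is order-preserving, as you say.

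Your route differs from the paper's in a useful way. The paper writes $\Omega=\sum_i H_i^2+\sum_\alpha E_\alpha E_{-\alpha}$ in Cartan--Weyl form and only carries out the factorization on the Cartan piece, obtaining $\sum_i((H_i,0)^2-(0,H_i)^2)=2\sum_i(H_i,H_i)(H_i,-H_i)$ with $(H_i,H_i)\in\lat_M\subset\lah$; the root-vector contributions are left as unspecified ``other terms'' and are implicitly of lower order after the radial-part computation. By working instead with an orthonormal basis of the \emph{whole} of $\lasl_{n+1}$, you make the difference-of-squares factorization apply uniformly and never have to invoke the structure of the radial part to dispose of the root vectors. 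This is cleaner for the order bound. On the other hand, the paper's version immediately isolates the first-order symbol of the radial part as $\sum_i\pi^H_\mu(H_i)\partial_{\xi_i}$, which is what feeds into the explicit formula \eqref{eq:First_OrderDO} and the computation of $\Upsilon_\ell^\mu$ afterwards; your argument would need one more step (restricting $Y_a^-$ to $\laa$ modulo $\lak$) to recover that.
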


\begin{proof}
Let $(H_{1},\ldots,H_{n})$ be an orthonormal basis of $\lat$ with respect to the Killing form. We have
$$\Omega=\sum_{i=1}^{n}H_{i}^{2}+\sum_{\alpha\in\Delta(\SL(n+1,\bbC),T)}E_{\alpha}E_{-\alpha},$$
where $E_{\alpha}$ is a root vector with $(E_{\alpha},E_{-\alpha})=1$. The Killing form on $\lat\oplus\lat$ is given by the sum of the Killing forms on the summands. Hence
$$\left( (H_{i},-H_{i})/\sqrt{2},i=1,\ldots,n\right) \cup \left( (H_{i},H_{i})/\sqrt{2},i=1,\ldots,n\right)$$
is an orthonormal basis of $\lat\oplus\lat=\laa\oplus\lat_{M}$. We have
\begin{eqnarray*}
\sum_{i=1}^{n}((H_{i},0)^{2}+(0,H_{i})^{2})&=&\sum_{i=1}^{n}((H_{i},-H_{i})^{2}+(H_{i},H_{i})^{2}),\\
\sum_{i=1}^{n}((H_{i},0)^{2}-(0,H_{i})^{2})&=&2\sum_{i=1}^{n}(H_{i},H_{i})(H_{i},-H_{i}).
\end{eqnarray*}
This shows that
$$\Omega_{L}-\Omega_{R}=2\sum_{i=1}^{n}(H_{i},H_{i})(H_{i},-H_{i})+\mbox{other terms},$$
and hence that $D^{\mu}_{L}-D^{\mu}_{R}$ has order one if $\pi^{H}_{\mu}|_{\lam}$ is not trivial and order zero otherwise. This proves the statement.
\end{proof}

To be able to calculate this order one differential operator explicitly we continue our analysis. Write $\xi_{i}=(H_{i},-H_{i})/\sqrt{2}$. The $\mu$-radial part is of the form
$$\rad_{\mu}(\Omega_{L}-\Omega_{R})=\sum_{i=1}^{n}\pi^{H}_{\mu}(H_{i})\del_{\xi_{i}}+G^{\mu},$$
where $G^{\mu}$ is an $\End(\End_{M_{c}}(V^{H}_{k\omega_{1}}))$-valued function on $A$. Conjugating with $\Phi^{\mu}_{0}$ yields
\begin{multline}
\label{eq:First_OrderDO}
(D^{\mu}_{L}-D^{\mu}_{R})Q(\phi)\\
=\sum_{k=1}^{n}\left(\sum_{i=1}^{n}m_{(\Psi^{\mu}_{0})^{-1}}\pi^{H}_{\mu}(H_{i})m_{(\Psi^{\mu}_{0})}\del_{\xi_{i}}\phi_{k}\right)(\del_{k}Q)(\phi)+(\Gamma^{\mu}_{L,0}-\Gamma^{\mu}_{R,0})Q(\phi).
\end{multline}
As a consequence of Proposition \ref{prop: do total degrees} we see that the expression
\begin{equation}
\label{eq:upsilon_k}
\Upsilon^{\mu}_{\ell}(\phi)=\sum_{i=1}^{n}m_{(\Psi^{\mu}_{0})^{-1}}\pi^{H}_{\mu}(H_{i})m_{(\Psi^{\mu}_{0})}\del_{\xi_{i}}\phi_{\ell},
\end{equation}
is matrix-valued polynomial of degree one.


\section{Examples}
In this section we give explicit expressions for the orthogonality weights and differential operators developed in the previous sections for small $n$ and for $k=1$. The polynomial part for the weight matrix is given for any $n$ in Theorem \ref{thm:formula_weight} and the scalar part of the weight is given in Theorem \ref{thm:scalar_weight}.
For this section we have complemented the theory of the previous sections by calculations using computer algebra. 

In order to compute the radial part of the Casimir operator $D^{\mu}_{L}+D^{\mu}_{R}$, we use the first order differential equations in Lemma \ref{lem:firstorderDEsPhi0}. For the first order differential operator $D^{\mu}_{L}-D^{\mu}_{R}$ we use \eqref{eq:First_OrderDO}. Using the explicit expression for $\Psi_0$ given in Theorem \ref{theorem: g_a}, we compute explicitly its inverse and after some simplification we obtain the matrices $L_k(\phi)$ and $C_k$ in \eqref{eq:CasimironQx} and the matrices $\Upsilon_\ell$ in \eqref{eq:upsilon_k}.

\subsection{The case $n=2$, $k=1$}
This case is the simplest nontrivial example of matrix-valued orthogonal polynomials in two variables. We drop the weight $\mu=\omega_{1}$ in the notation of what follows.

\subsubsection{The orthogonality}
By Theorem \ref{thm:expression_Phi0}, the function $\Psi_0$ is given explicitly by
\begin{eqnarray}
\label{eq:Phi_0_n=2}
\Psi_{0}(t,t^{-1})=\begin{pmatrix}
t_{1} & \frac{1}{2}(t_{3}^{-1}t_{2}+t_{2}^{-1}t_{3}) & t_{1}^{-1} \\
t_{2} & \frac{1}{2}(t_{1}^{-1}t_{3}+t_{3}^{-1}t_{1}) & t_{2}^{-1} \\
t_{3} & \frac{1}{2}(t_{2}^{-1}t_{1}+t_{1}^{-1}t_{2}) & t_{3}^{-1}
\end{pmatrix}, \quad t_{1}t_{2}t_{3}=1.
\end{eqnarray}
The zonal spherical functions are
$$\phi_1(t,t^{-1})=\frac{1}{3}\left(t_1^2+t_2^2+t_3^2\right),\qquad \phi_2(t,t^{-1})=\frac{1}{3}\left(t_1^2t_2^2+t_1^2t_3^2+t_2^2t_3^2\right).$$
The matrix-valued orthogonality relations for the polynomials $Q_d^{\omega_1}$ of degree $d\in \mathbb{N}_0\times \mathbb{N}_0$ follow directly from \eqref{eq:orthoCNonA} and Theorem \ref{thm:formula_weight}. We have
$$\int_{\phi(\exp(\mathfrak{b}))} (Q_d^{\omega_1}(\phi))^\ast \, W(\phi)\, Q_{d'}^{\omega_1}(\phi) \, d\phi=\delta_{d,d'}\, H_{d},$$
where $H_d$ is a constant matrix and the matrix weight $W(\phi)=w(\phi)W_{\mathrm{pol}}(\phi)$ is given by
\begin{multline}\label{eq: weight (2,1)}
W_{\mathrm{pol}}(\phi)=\begin{pmatrix}
3&3\phi_{1}&3\phi_{2}\\
3\phi_{2} & (9\phi_{1}\phi_{2}+3)/4&3\phi_{1}\\
3\phi_{1} & 3\phi_{2} & 3
\end{pmatrix},\\
w(\phi) = \frac{9}{4\pi^{2}}(-\phi_1^2\phi_2^2+4\phi_1^3+4\phi_2^3-18\phi_1\phi_2+27)^{\frac12}.
\end{multline}

\subsubsection{The differential operators}
We take the orthogonal basis of $\mathfrak{t}$ with respect to the Killing form $(H_1,H_2)$, where $H_1=\frac{\sqrt{2}}{2}\diag(1,-1,0)$, 
$H_2=\frac{\sqrt{6}}{6}\diag(1,1,-2)$. The derivatives $\partial_{\xi_i}$ are given by
$$\partial_{\xi_1}=\frac{\sqrt{2}}{2}\left( t_1\partial_{t_1}-t_2\partial_{t_2}\right),\quad 
\partial_{\xi_2}=\frac{\sqrt{6}}{{6}}\left(t_1\,\partial_{t_1}+t_2\,\partial_{t_2}-2\,t_3\,\partial_{t_3}\right).$$
The explicit expression of the radial part of the Casimir operator follows from \eqref{eq:CasimironQx} and the explicit expression of $\Psi_0$ given in \eqref{eq:Phi_0_n=2}. 
Explicitly we have
\begin{align*}
(\del_{\xi_{1}}\phi_{1})(\del_{\xi_{1}}\phi_{1})+(\del_{\xi_{2}}\phi_{1})(\del_{\xi_{2}}\phi_{1})&=\frac{8}{3}(\phi_1^2-\phi_2),\\
(\del_{\xi_{1}}\phi_{1})(\del_{\xi_{1}}\phi_{2})+(\del_{\xi_{2}}\phi_{1})(\del_{\xi_{2}}\phi_{2})&=\frac{4}{3}(\phi_1\phi_2-1)=(\del_{\xi_{1}}\phi_{2})(\del_{\xi_{1}}\phi_{1})+(\del_{\xi_{2}}\phi_{2})(\del_{\xi_{2}}\phi_{1})\\
(\del_{\xi_{1}}\phi_{2})(\del_{\xi_{1}}\phi_{2})+(\del_{\xi_{2}}\phi_{2})(\del_{\xi_{2}}\phi_{2})&=\frac{8}{3}(\phi_2^2-\phi_1)
\end{align*}
A straightforward computation shows that
\begin{align*}
L_1(\phi_1,\phi_2)=\begin{pmatrix} \frac{8}{3}\phi_1 & -2\phi_2 & 0 \\ 0  & 4 \phi_1 & 0 \\ 0& 0 & \frac{4}{3}\phi_1 \end{pmatrix},\qquad C_1=\begin{pmatrix} 0 & 0 & -\frac{4}{3} \\ -\frac{8}{3}  & 0 & 0 \\ 0& -2 & 0 \end{pmatrix}, \\
L_2(\phi_1,\phi_2)=\begin{pmatrix} \frac{4}{3}\phi_2 & 0 & 0 \\ 0  & 4 \phi_2 & 0 \\ 0& -2\phi_1 & \frac{8}{3}\phi_2 \end{pmatrix},\qquad C_2=\begin{pmatrix} 0 & -2 &0 \\ 0  & 0 & -\frac{8}{3} \\  -\frac{4}{3}&0 & 0 \end{pmatrix}.
\end{align*}
The coefficient of order zero $\Gamma_0$ is given by
$$
\Gamma_0=\Gamma_{L,0}+\Gamma_{R,0} = \mathrm{diag}(\frac83, \frac{16}{3}, \frac83).
$$
We recall that $\Gamma_0$ is also the eigenvalue of the polynomial $Q_{0,0}$. Moreover, the eigenvalue of the polynomial $Q_{d_1,d_2}$ is given by
the diagonal matrix 
$$\Gamma^{+}_{d_1,d_2}=\Gamma_{L,d_1,d_2}+\Gamma_{R,d_1,d_2}= (\frac43 d_1^2 + \frac43 d_1d_2 + \frac{4}{3} d_2^2)\mathrm{I}+\mathrm{diag} \begin{pmatrix}
\frac{16}{3} d_1 + \frac{14}{3} d_2 +\frac{8}{3} \\
 6 d_1 + 6 d_2 +\frac{16}{3} \\
\frac{14}{3} d_1 + \frac{16}{3} d_2 +\frac{8}{3}
\end{pmatrix}.$$
The first order differential operator \eqref{eq:First_OrderDO} is obtained directly from the expression of $\Psi_0$ . We get
$$D_L-D_R=\Upsilon_{1}(\phi)\, \partial_1 + \Upsilon_{2}(\phi) \, \partial_2 + (\Gamma_{L,0}-\Gamma_{R,0}),$$
where
\begin{align*}
\Upsilon_{1}(\phi)&=(\Psi_{0})^{-1}\pi^{H}_{\omega_1}(H_{1})\Psi_{0} \, \del_{\xi_{1}}\phi_{1}+(\Psi_{0})^{-1}\pi^{H}_{\omega_1}(H_{2})\Psi_{0}\, \del_{\xi_{2}}\phi_{1} = \begin{pmatrix} \frac43 \phi_1 & \phi_2 & \frac23 \\ -\frac43 & -\frac23\phi_1 & 0 \\ 0 & -\frac13 & -\frac23 \phi_1 \end{pmatrix},\\
\Upsilon_{2}(\phi)&=(\Psi_{0})^{-1}\pi^{H}_{\omega_1}(H_{1})\Psi_{0} \, \del_{\xi_{1}}\phi_{2}+(\Psi_{0})^{-1}\pi^{H}_{\omega_1}(H_{2})\Psi_{0}\, \del_{\xi_{2}}\phi_{2} = \begin{pmatrix} \frac23 \phi_2 & \frac13 & 0 \\ 0 & \frac23\phi_2 & \frac43 \\ -\frac23 &  -\phi_1 & -\frac43 \phi_2 \end{pmatrix},
\end{align*}
The coefficient of order zero for the first order differential operator is given by
$$\Gamma_{L,0}-\Gamma_{R,0} = \mathrm{diag}(\frac83,0,-\frac83).
$$
Moreover, the eigenvalue of the polynomial $Q_{d_1,d_2}$ is given by the diagonal matrix 
$$\Gamma^{-}_{d_1,d_2}=\Gamma_{L,d_1,d_2}-\Gamma_{R,d_1,d_2}= \mathrm{diag} \begin{pmatrix}
-\frac{4}{3} d_1 + \frac{2}{3} d_2 +\frac{8}{3} \\
 -\frac{2}{3} d_1 + \frac{2}{3} d_2  \\
- \frac{2}{3} d_1 - \frac{4}{3} d_2 -\frac{8}{3}
\end{pmatrix}.$$

\subsection{The case $n=3$, $k=1$}
Here we obtain a $4\times 4$ matrix weight in three variables. We drop the weight $\mu=\omega_{1}$ in the notation of what follows.

\subsubsection{The orthogonality}
The function $\Psi_0$ is given by
\begin{eqnarray*}
\Psi_{0}(t,t^{-1})=\begin{pmatrix}
t_1 & \frac{1}{3}\left( \frac{t_2}{t_3t_4}+\frac{t_3}{t_2t_4}+\frac{t_4}{t_2t_3}\right) &  \frac{1}{3}\left( \frac{t_2t_3}{t_4}+\frac{t_2t_4}{t_3}+\frac{t_3t_4}{t_2}\right)  & t_1^{-1} \\
t_2 &\frac{1}{3}\left( \frac{t_1}{t_3t_4}+\frac{t_3}{t_1t_4}+\frac{t_4}{t_1t_3}\right) & \frac{1}{3}\left( \frac{t_1t_3}{t_4}+\frac{t_1t_4}{t_3}+\frac{t_3t_4}{t_1}\right)  & t_2^{-1} \\
t_3 & \frac{1}{3}\left( \frac{t_1}{t_2t_4}+\frac{t_2}{t_1t_4}+\frac{t_4}{t_1t_2}\right) &  \frac{1}{3}\left( \frac{t_1t_2}{t_4}+\frac{t_1t_4}{t_2}+\frac{t_2t_4}{t_1}\right)  & t_3^{-1} \\
t_4 & \frac{1}{3}\left( \frac{t_1}{t_2t_3}+\frac{t_2}{t_1t_3}+\frac{t_3}{t_1t_2}\right) &  \frac{1}{3}\left( \frac{t_1t_2}{t_3}+\frac{t_1t_3}{t_2}+\frac{t_2t_3}{t_1}\right)  & t_4^{-1}
\end{pmatrix}, \quad t_{1}t_{2}t_{3}t_{4}=1.
\end{eqnarray*}
The zonal spherical functions are
\begin{gather*}
\phi_1(t,t^{-1})=\frac{1}{4}\left(t_1^2+t_2^2+t_3^2 +t_4^2\right),\quad \phi_2(t,t^{-1})=\frac{1}{6}\left(t_1^2t_2^2+t_1^2t_3^2+t_1^2t_4^2+t_2^2t_3^2+t_2^2t_4^2+t_3^2t_4^2\right), \\
\phi_3(t,t^{-1})=\frac{1}{4}\left(t_1^2t_2^2t_3^2+t_1^2t_2^2t_4^2+t_1^2t_3^2t_4^2+t_2^2t_3^2t_4^2\right).
\end{gather*}
The matrix weight $W(\phi)=w(\phi)W_{\mathrm{pol}}(\phi)$ is given by
\begin{gather*}
W_{\mathrm{pol}}(\phi)=\begin{pmatrix}
4 & 4\phi_1 & 4\phi_2 & 4\phi_3 \\ 
4\phi_3  & \frac{32}{9} \phi_1\phi_3 +\frac{1}{3} & \frac{8}{3}\phi_2\phi_3-\frac{4}{3}\phi_1 & 4\phi_2 \\
4\phi_2  & \frac{8}{3}\phi_1\phi_2 +\frac{4}{3}\phi_3 & \frac{32}{9} \phi_1\phi_3+\frac{1}{3} & 4\phi_1 \\
4\phi_1  & 4\phi_2 & 4\phi_3 & 4
\end{pmatrix}, 
\end{gather*}
\begin{multline}\label{eq: weight (3,1)}
w(\phi)= \frac{12}{\pi^3}\left(13824\phi_1^2\phi_2-3072\phi_1\phi_3-16384\phi_1^3\phi_3^3-13824\phi_1^2\phi_2^3-1536\phi_1^2\phi_3^2\right.\\
\left. -13824\phi_2^3\phi_3^2+13824\phi_2\phi_3^2-6912\phi_1^4-4608\phi_2^2+9216\phi_1^2\phi_2^2\phi_3^2+27648\phi_1^3\phi_2\phi_3\right.\\
\left.+27648\phi_2\phi_1\phi_3^3-46080\phi_1\phi_2^2\phi_3+20736\phi_2^4-6912\phi_3^4+256\right)^{\frac12}.
\end{multline}

\subsubsection{The differential operators}
We take the orthogonal basis of $\mathfrak{t}$ with respect to the Killing form $(H_1,H_2,H_3)$, where $H_1=\frac{\sqrt{2}}{2}\diag(1,-1,0,0)$, 
$H_2=\frac{\sqrt{6}}{6}\diag(1,1,-2,0)$, $H_3=\frac{\sqrt{3}}{6}(1,1,1,-3)$. The derivatives $\partial_{\xi_i}$ are given by
\begin{gather*}
\partial_{\xi_1}=\frac{\sqrt{2}}{2}\left( t_1\, \partial_{t_1}-t_2\,\partial_{t_2}\right),\quad 
\partial_{\xi_2}=\frac{\sqrt{6}}{{6}}\left(t_1\,\partial_{t_1}+t_2\,\partial_{t_2}-2\,t_3\,\partial_{t_3}\right),\\
\partial_{\xi_3}=\frac{\sqrt{3}}{{6}}\left(t_1\,\partial_{t_1}+t_2\,\partial_{t_2}+t_3\,\partial_{t_3}-3\,t_4\,\partial_{t_4}\right),
\end{gather*}
The explicit expression of the radial part of the Casimir operator follows from \eqref{eq:CasimironQx} and the explicit expression of $\Psi_0$ given in \eqref{eq:Phi_0_n=2}. Explicitly we have
\begin{align*}
(\del_{\xi_{1}}\phi_{1})(\del_{\xi_{1}}\phi_{1})+(\del_{\xi_{2}}\phi_{1})(\del_{\xi_{2}}\phi_{1})+(\del_{\xi_{3}}\phi_{1})(\del_{\xi_{3}}\phi_{1})&=3(\phi_1^2-\phi_2),\\
(\del_{\xi_{1}}\phi_{1})(\del_{\xi_{1}}\phi_{2})+(\del_{\xi_{2}}\phi_{1})(\del_{\xi_{2}}\phi_{2})+(\del_{\xi_{3}}\phi_{1})(\del_{\xi_{3}}\phi_{2})&=2(\phi_1\phi_2-\phi_3),\\
(\del_{\xi_{1}}\phi_{1})(\del_{\xi_{1}}\phi_{3})+(\del_{\xi_{2}}\phi_{1})(\del_{\xi_{2}}\phi_{3})+(\del_{\xi_{3}}\phi_{1})(\del_{\xi_{3}}\phi_{3})&=\phi_1\phi_3-1,\displaybreak[0] \\
(\del_{\xi_{1}}\phi_{2})(\del_{\xi_{1}}\phi_{2})+(\del_{\xi_{2}}\phi_{2})(\del_{\xi_{2}}\phi_{2})+(\del_{\xi_{3}}\phi_{2})(\del_{\xi_{3}}\phi_{2})&=\frac49\phi_2^2-\frac{32}{9}\phi_1\phi_3-\frac49, \displaybreak[0] \\
(\del_{\xi_{1}}\phi_{2})(\del_{\xi_{1}}\phi_{3})+(\del_{\xi_{2}}\phi_{2})(\del_{\xi_{2}}\phi_{3})+(\del_{\xi_{3}}\phi_{2})(\del_{\xi_{3}}\phi_{3})&=2(\phi_2\phi_3-\phi_1), \displaybreak[0] \\
(\del_{\xi_{1}}\phi_{3})(\del_{\xi_{1}}\phi_{3})+(\del_{\xi_{2}}\phi_{3})(\del_{\xi_{2}}\phi_{3})+(\del_{\xi_{3}}\phi_{3})(\del_{\xi_{3}}\phi_{3})&=3(\phi_3^2-\phi_2).
\end{align*}
A straightforward computation shows that
\begin{align*}
L_1(\phi_1,\phi_2)=\begin{pmatrix} 3\phi_1 & -2\phi_2 & -\frac{4}{3}\phi_3& 0 \\ 0  & 5 \phi_1 & 0 & 0 \\ 0& 0 & 3\phi_1 & 0 \\ 0 & 0 & 0 & \phi_1 \end{pmatrix},\qquad C_1=\begin{pmatrix} 0 & 0 & 0 & -1 \\ -3  & 0 & 0 & 0 \\ 0& -3 & 0 & 0 \\ 0 & 0 & -\frac53 & 0 \end{pmatrix}, \displaybreak[0] \\
L_2(\phi_1,\phi_2)=\begin{pmatrix} 2\phi_2 & -\frac83 \phi_3 & 0& 0 \\ 0  & 4 \phi_2 & -\frac83\phi_3 & 0 \\ 0& -\frac83 \phi_1 & 4\phi_2 & 0 \\ 0 & 0 & -\frac83\phi_1 & \frac43\phi_2 \end{pmatrix},\qquad C_2=\begin{pmatrix} 0 & 0 & -\frac23 & 0 \\ 0  & 0 & 0 & -2 \\ -2& 0 & 0 & 0 \\ 0 & -\frac23 & 0 & 0 \end{pmatrix}, \displaybreak[0] \\
L_3(\phi_1,\phi_2)=\begin{pmatrix} \phi_3 & 0 & 0 & 0 \\ 0  & 3 \phi_3 & 0 & 0 \\ 0& 0 & 5\phi_3 & 0 \\ 0 & -\frac43\phi1 & -2\phi_2 & 3\phi_3 \end{pmatrix},\qquad C_3=\begin{pmatrix} 0 & \frac53 & 0 & 0 \\ 0  & 0 & -3 & 0 \\ 0& 0 & 0 & -3 \\ -1 & 0 & 0 & 0 \end{pmatrix}.
\end{align*}
The coefficient of order zero is given by
$$\Gamma_0= \mathrm{diag}(\frac{15}{4},\frac{35}{4},\frac{35}{4},\frac{15}{4})
$$
Moreover, the eigenvalue of the polynomial $Q_{d_1,d_2}$ is given by
$$\Gamma^{+}_{d_1,d_2}= (\frac {3}{2} d_1^2 + 2 d_2^2 + \frac{3}{2} d_3^2+ 2 d_1d_2 +d_1d_3+2d_2d_3)\mathrm{I}+\mathrm{diag} \begin{pmatrix}
  \frac{15}{2} d_1 + 9d_2 + \frac{13}{2}d_3+\frac{15}{4} \\
  \frac{17}{2}d_1 + 11d_2 + \frac{15}{2}d_3+\frac{35}{4}\\
  \frac{15}{2} d_1 + 11d_2 + \frac{17}{2}d_3+\frac{35}{4}\\
  \frac{13}{2} d_1 + 9d_2 + \frac{15}{2}d_3+\frac{15}{4}
\end{pmatrix}.$$
The first order differential operator \eqref{eq:First_OrderDO} is obtained directly from the expression of $\Psi_0$ . We get
$$D_L-D_R=\Upsilon_{1}(\phi)\, \partial_1 + \Upsilon_{2}(\phi) \, \partial_2 + (\Gamma_{L,0}-\Gamma_{R,0}),$$
where
\begin{align*}
\Upsilon_{1}(\phi)& = \begin{pmatrix}
\frac34 \phi_1 & \phi_2 & \frac23 \phi_3 & \frac12 \\
-\frac32 & -\frac14\phi_1 & 0 & 0 \\
0 & -\frac12 & -\frac14 \phi_1 & 0 \\
0 & 0 & -\frac16 & -\frac14\phi_1
\end{pmatrix},  \qquad &\Upsilon_{2}(\phi)=&\begin{pmatrix} 
\phi_2 & \frac49 \phi_3 & \frac19 & 0 \\
0 & \phi_2 & \frac43 \phi_3 & 1 \\
-1 & -\frac43 \phi_1 & -\phi_2 & 0 \\
0 & -\frac19 & -\frac49 \phi_1 & -\phi_2
 \end{pmatrix}, \displaybreak[0] \\
\Upsilon_{3}(\phi)&=\begin{pmatrix} 
\frac14\phi_3 & \frac16 \phi_3 & 0 & 0 \\
0 &\frac14 \phi_3 & \frac12  & 0 \\
0 &  \phi_1 & \frac14 \phi_3 & \frac32 \\
-\frac12 & -\frac46\phi_1 & -\phi_2  & -\frac34\phi_3
 \end{pmatrix},\qquad &\Gamma_{L,0}-\Gamma_{R,0} =&
\begin{pmatrix}
\frac{15}{4} & 0 & 0 & 0 \\
0 & \frac{5}{4} & 0 & 0 \\
0 & 0 & -\frac{5}{4} & 0 \\
0 & 0 & 0 & -\frac{15}{4}
\end{pmatrix}.
\end{align*}
Moreover, the eigenvalue of the polynomial $Q_{d_1,d_2}$ is given by
$$\Gamma^{-}_{d_1,d_2}=\Gamma_{L,d_1,d_2}-\Gamma_{R,d_1,d_2}= \mathrm{diag} \begin{pmatrix}
\frac{3}{2} d_1 +d_2+ \frac{1}{2} d_3 +\frac{15}{4} \\
-\frac{1}{2} d_1 +d_2+ \frac{1}{2} d_3 +\frac{5}{4} \\
-\frac{1}{2} d_1 -d_2+ \frac{1}{2} d_3 -\frac{5}{4}\\
-\frac{1}{2} d_1 -d_2- \frac{3}{2} d_3 -\frac{15}{4}
\end{pmatrix}.$$

\end{document}